\tikzset{>=latex}
\pgfplotsset{compat=newest}
\DeclareMathOperator*{\esssup}{ess\ sup}
\newcommand{\N}{\mathbb{N}}
\newcommand{\R}{\mathbb{R}}
\newcommand{\tr}{\operatorname{Tr}}
\def\gscale{1.4}
\def\graphschody{
  \begin{tikzpicture}[scale=\gscale]
    \footnotesize
    \draw[ultra thin] (-0.2,  0.0) -- (3.2, 0.0) node[below]{$|\nabla u|$};
    \draw[ultra thin] ( 0.0, -0.2) -- (0.0, 3.2) node[left ]{$|\j|$};
%    \node[dot,label={below:$d_*$}]      at (1, 0) {};
%    \node[dot,label={left:$2\nu_*d_*$}] at (0, 1) {};
    \draw[variable=\t,very thick,every node/.style={inner sep=1,outer sep=1}]
      plot[domain=0:1] ({0},{\t})
    --plot[domain=0:1] ({\t},{1})
    --plot[domain=1:2] ({1},{\t})
    --plot[domain=1:2] ({\t},{2})
    --plot[domain=2:3] ({2},{\t})
    --plot[domain=2:3] ({\t},{3})
   % node[draw,ultra thin,solid,above,xshift=0,yshift=3] {$\g(\j,\d)=\0$}
    ;
  \end{tikzpicture}
}
\def\sscale{1.4}
\def\graphsmykala{
  \begin{tikzpicture}[scale=\sscale]
    \footnotesize
    \draw[ultra thin] (-0.2,  0.0) -- (3.2, 0.0) node[below]{$|\nabla u|$};
    \draw[ultra thin] ( 0.0, -0.2) -- (0.0, 3.2) node[left ]{$|\j|$};
%    \node[dot,label={below:$d_*$}]      at (1, 0) {};
%    \node[dot,label={left:$2\nu_*d_*$}] at (0, 1) {};
    \draw[variable=\t,very thick,every node/.style={inner sep=1,outer sep=1}]
      plot[domain=0:1] ({0},{\t})
    --plot[domain=0:1] ({\t},{1})
    --plot[domain=1:3] ({\t},{\t})
   % node[draw,ultra thin,solid,above,xshift=0,yshift=3] {$\g(\j,\d)=\0$}
    ;
  \end{tikzpicture}
}
\def\grafg{
\draw[variable=\t,every node/.style={inner sep=1,outer sep=1}]
      plot[domain=0:1] ({0},{\t})
    --plot[domain=0:1] ({\t},{1})
    --plot[domain=1:2] ({1},{\t})
    --plot[domain=1:2] ({\t},{2})
    --plot[domain=2:3] ({2},{\t})
    --plot[domain=2:3] ({\t},{3})
    ;
}
\def\grafAee#1#2{
\def\ee{#1}
\def\gAee{(1+\ee^2)/\ee*\t}
\def\ggAee{\ee*\t+1}
    \draw[variable=\t,#2,every node/.style={inner sep=1,outer sep=1}]
      plot[domain=0:{\ee}]             ({\t},{\gAee})
    --plot[domain={\ee}:{1+\ee}]       ({\t},{\ggAee})
    --plot[domain={1+\ee}:{1+2*\ee}]   ({\t},{\gAee-1/\ee})
    --plot[domain={1+2*\ee}:{2+2*\ee)}]({\t},{\ggAee+1})
    --plot[domain={2+2*\ee)}:{2+3*\ee}]({\t},{\gAee-2/\ee})
    --plot[domain={2+3*\ee}:{3+3*\ee}] ({\t},{\ggAee+2})
    ;
}
\def\1scale{1.5}
\def\grafge#1#2{
\def\ee{#1}
\def\gge{\t/\ee}
\def\ggge{\ee*\t+1}
    \draw[variable=\t,{#2},every node/.style={inner sep=1,outer sep=1}]
      plot[domain=0:{\ee/(1-\ee^2)}]                     ({\t},{\gge})
    --plot[domain={\ee/(1-\ee^2)}:{1/(1-\ee)}]           ({\t},{\ggge})
    --plot[domain={1/(1-\ee)}:{1/(1-\ee)+\ee/(1-\ee^2)}] ({\t},{\gge-1/\ee})
    --plot[domain={1/(1-\ee)+\ee/(1-\ee^2)}:{2/(1-\ee)}] ({\t},{\ggge+1})
    --plot[domain={2/(1-\ee)}:{2/(1-\ee)+\ee/(1-\ee^2)}] ({\t},{\gge-2/\ee})
    --plot[domain={2/(1-\ee)+\ee/(1-\ee^2)}:{3/(1-\ee)}] ({\t},{\ggge+2})
    ;
}
\def\2scale{1.5}
\def\grafte#1#2{
\def\ee{#1}
\def\gte{\t*(\ee+2)/\ee}
\def\ggte{\t*\ee/(2+\ee)}
    \draw[variable=\t,{#2},every node/.style={inner sep=1,outer sep=1}]
      plot[domain=0:{\ee/(2+2*\ee)}]       ({\t},{\gte})
    --plot[domain={\ee/(2+2*\ee)}:{1}]     ({\t},{\ggte+2/(2+\ee)})
    --plot[domain={1}:{\ee/(2+2*\ee)+1}]   ({\t},{\gte-2/\ee})
    --plot[domain={\ee/(2+2*\ee)+1}:{2}]   ({\t},{\ggte+4/(2+\ee)})
    --plot[domain={2}:{\ee/(2+2*\ee)+2}]   ({\t},{\gte-4/\ee})
    --plot[domain={\ee/(2+2*\ee)+2}:{3}]   ({\t},{\ggte+6/(2+\ee)})
    ;
}
\def\3scale{1.5}
\def\porovnanie#1#2{
  \begin{tikzpicture}[scale=1.35]
    \footnotesize
\def\epsi{#1}
\def\vel{#2}
    \draw[ultra thin] (-0.2,  0.0) -- (\vel+0.2, 0.0) node[below]{$|\dd|$};
    \draw[ultra thin] ( 0.0, -0.2) -- (0.0, \vel+0.2) node[left ]{$|\j|$};
    %\draw[ultra thin] (-0.2,  0.0) -- (\vel, 0.0) node[below]{$|\dd|$};
    %\draw[ultra thin] ( 0.0, -0.2) -- (0.0, \vel) node[left ]{$|\j|$};
    \grafg
    \grafAee{\epsi}{dash dot}
    \grafge{\epsi}{dotted}
    \grafte{\epsi}{dashed}
  \end{tikzpicture}
}
\begin{document}

\newcommand{\ddd}{\,{\rm d}}
%\numberwithin{equation}{section} \marginparwidth=2cm

\def\note#1{\marginpar{\small #1}}
\def\tens#1{\pmb{\mathsf{#1}}}
\def\vec#1{\boldsymbol{#1}}
\def\norm#1{\left|\!\left| #1 \right|\!\right|}
\def\fnorm#1{|\!| #1 |\!|}
\def\abs#1{\left| #1 \right|}
\def\ti{\text{I}}
\def\tii{\text{I\!I}}
\def\tiii{\text{I\!I\!I}}

\newcommand{\loc}{{\rm loc}}
\def\diver{\mathop{\mathrm{div}}\nolimits}
\def\grad{\mathop{\mathrm{grad}}\nolimits}
\def\Div{\mathop{\mathrm{Div}}\nolimits}
\def\Grad{\mathop{\mathrm{Grad}}\nolimits}
\def\cof{\mathop{\mathrm{cof}}\nolimits}
\def\det{\mathop{\mathrm{det}}\nolimits}
\def\lin{\mathop{\mathrm{span}}\nolimits}
\def\pr{\noindent \textbf{Proof: }}

\def\pp#1#2{\frac{\partial #1}{\partial #2}}
\def\dd#1#2{\frac{\d #1}{\d #2}}
\def\vec#1{\boldsymbol{#1}}

\def\0{\vec{0}}
\def\A{\mathcal{A}}
\def\B{\mathcal{B}}
\def\b{\vec{b}}
\def\C{\mathcal{C}}
\def\c{\vec{c}}
\def\D{\nabla \u}
\def\DD{\vec{D}}
\def\BB{\vec{B}}
\def\e{\varepsilon}
\def\f{\vec{f}}
\def\F{\vec{F}}
\def\tF{\tilde{\F}}
\def\g{\vec{g}}
\def\G{\vec{G}}
\def\cG{\mathcal{\G}}
\def\I{\vec{I}}
\def\Im{\text{Im}}
\def\j{\vec{j}}
\def\dd{\vec{d}}
\def\k{\vec{k}}
\def\n{\vec{n}}
\def\q{\vec{q}}
\def\S{\vec{J}}
\def\s{\vec{s}}
\def\T{\vec{T}}
\def\u{\vec{u}}
\def\vp{\vec{\varphi}}
\def\vv{\vec{v}}
\def\w{\vec{w}}
\def\W{\vec{W}}
\def\x{\vec{x}}
\def\z{\vec{Z}}
\def\tz{\tilde{\z}}
\def\Z{\vec{Z}}
\def\X{\vec{X}}
\def\Y{\vec{Y}}
\def\balfa{\vec{\alpha}}

\def\Ae{\A_\e}
\def\Aee{\Ae^\e}
\def\Aeetilde{\tilde{A}_\e^\e}
\def\Be{\B_\e}
\def\Bee{\Be^\e}
\def\De{\DD^\e}
\def\Dve{\D^\e}
\def\oD{\overline{\DD}}
\def\tD{\tilde{\DD}}
\def\Dn{\DD^\e}
\def\Dno{\overline{\Dn}}
\def\Dnt{\tilde{\Dn}}
\def\Dm{\DD^\eta}
\def\Dmo{\overline{\Dm}}
\def\Dmt{\tilde{\Dm}}
\def\Se{\S^\e}
\def\se{\s^\e}
\def\ose{\overline{\se}}
\def\oS{\overline{\S}}
\def\tS{\tilde{\S}}
\def\Sn{\S^\e}
\def\Sno{\overline{\Sn}}
\def\Snt{\tilde{\Sn}}
\def\Sm{\S^\eta}
\def\Smo{\overline{\Sm}}
\def\Smt{\tilde{\Sm}}
\def\ve{\u^\e}
\def\ove{\overline{\ve}}
\def\vd{\u^\delta}
\def\sd{\s^\delta}
\def\Sd{\S^\delta}
\def\Dd{\D^\delta}

\def\Wnd#1{W^{1,#1}_{\n, \diver}}
\def\Wndr{W^{1,r}_{\n, \diver}}

\def\o{\Omega}
\def\po{\partial \Omega}
\def\dt{\frac{\d}{\d t}}
\def\pt{\partial_t}
\def\ig{\int_{\Gamma} \!}
\def\igt{\int_{\Gamma_t} \!}
\def\io{\!\int_{\Omega} \!}
\def\ipo{\!\int_{\partial \Omega} \!}
\def\iq{\int_{Q} \!}
\def\iqt{\int_{Q_t} \!}
\def\it{\int_0^t \!}
\def\iT{\int_0^T \!}
\def\d{\, \textrm{d}}

\def\mn{\mathcal{P}}
\def\du{\mathcal{W}}
\def\tr{\text{tr}}
\def\tow{\rightharpoonup}

%------------------------------------------------

\newtheorem{theorem}{Theorem}[section]
\newtheorem{lemma}[theorem]{Lemma}
\newtheorem{proposition}[theorem]{Proposition}
\newtheorem{remark}[theorem]{Remark}
\newtheorem{corollary}[theorem]{Corollary}
\newtheorem{definition}[theorem]{Definition}
\newtheorem{example}[theorem]{Example}

\numberwithin{equation}{section}

\title[Nonlinear parabolic problems with implicit constitutive
equations]{On nonlinear problems of parabolic type with implicit constitutive
equations involving flux}

\thanks{M.~Bul\'i\v{c}ek and J.~M\'alek acknowledge the support of the project No. 18-12719S financed by the Czech Science foundation (GA\v{C}R). E.~Maringov\'a acknowledges support from Charles University Research program UNCE/SCI/023, the grant SVV-2020-260583
 by the Ministry of Education, Youth and Sports, Czech Republic and from the Austrian Science Fund (FWF), grants P30000, W1245, and F65.  M.~Bul\'i\v{c}ek and J.~M\'alek are members of the {Ne\v{c}as} Center for Mathematical Modelling.}

\author[M.~Bul\'{i}\v{c}ek]{Miroslav Bul\'i\v{c}ek}
\address{Charles University, Faculty of Mathematics and Physics, Mathematical Institute, Sokolovsk\'{a} 83, 186~75, Prague, Czech Republic}
\email{mbul8060@karlin.mff.cuni.cz}

\author[J. M\'alek]{Josef M\'alek}
\address{Charles University, Faculty of Mathematics and Physics, Mathematical Institute, Sokolovsk\'{a} 83, 186~75, Prague, Czech Republic}
\email{malek@karlin.mff.cuni.cz}

\author[E.~Maringov\'{a}]{Erika Maringov\'{a}}
\address{Institute of Science and Technology Austria, Am Campus 1, 3400 Klosterneuburg, Austria}
\email{erika.maringova@ist.ac.at}

\keywords{nonlinear parabolic systems, implicit constitutive theory, weak solutions, existence, uniqueness}
\subjclass[2010]{35K55, 35J66 (primary), and 35Q74, 35Q35 (secondary)}

\begin{abstract}
We study systems of nonlinear partial differential equations of parabolic type, in which the elliptic operator is replaced by the first order divergence operator acting on a flux function, which is related to the spatial gradient of the unknown through an additional implicit equation. This setting, broad enough in terms of applications, significantly expands the paradigm of nonlinear parabolic problems. Formulating four conditions concerning the form of the implicit equation, we first show that these conditions describe a  maximal monotone $p$-coercive graph. We then establish the global-in-time and large-data existence of a (weak) solution and its uniqueness. To this end, we adopt and significantly generalize Minty's method of monotone mappings. A unified theory, containing several novel tools, is developed in a way to be tractable from the point of view of numerical approximations.
\end{abstract}

\maketitle

\section{Introduction}\label{intro}

An initial-boundary-value problem for a scalar linear parabolic equation is usually formulated in the following way:
\begin{equation}
\begin{aligned}
\text{For any given  }\o \subset \R^d,~&T>0,~u_0: \o \to \R,~u_D: \Sigma_D \to \R,\\ f: Q \to \R,~g: \Sigma_N \to \R, &\text{ find a function } u:Q \to \R \text{ satisfying\footnotemark} \\
&\begin{aligned}\label{1P}
\pt u - \Delta u &=f &&\text{in } Q , \\
u &= u_D &&\text{on } \Sigma_D, \\
\nabla u \cdot \n &=g &&\text{on } \Sigma_N, \\
 u (0,\cdot) &=u_0 &&\text{in } \o.
\end{aligned}
\end{aligned}
\end{equation}
\footnotetext{Instead of $\Delta u$, we could consider a general linear elliptic operator of the form $\diver (\mathbb{A}(t, \x)\nabla u)$, where $\mathbb{A}$ fulfills the ellipticity condition: there exists $\alpha >0$ such that for all $\vec{z}\in \R^d$, $\vec{z} \cdot \mathbb{A}(t,\x) \vec{z} \geq \alpha |\vec{z}|^2$. Although this extension has some positive aspects, we avoid it so as to keep the introductory section simpler from the point of view of notation.}%
Here, $\o \subset \R^d$, $d\geq 2$, is supposed to be an open, bounded, connected set with a Lipschitz boundary $\po$ consisting of two mutually disjoint parts $\Gamma_D$ and $\Gamma_N$ so that $\overline{\Gamma_D \cup \Gamma_N}=\po$. Furthermore, $\n: \po \to \R^d$ is the outer unit normal, $Q:= (0,T)\times \o$, $\Sigma_D:= (0,T)\times \Gamma_D$, and $\Sigma_N:= (0,T)\times \Gamma_N$. The set $\o$ having the above properties will be called \underline{Lipschitz domain} in what follows.

The problem~\eqref{1P} can be equivalently rewritten in the form:
\begin{equation}
\label{2}
\begin{aligned}
\text{Find, for any data given in }&\text{\eqref{1P}, a couple of functions } (u, \j): Q \to \R \times \R^d \text{ satisfying}\\
&\begin{aligned}
\pt u - \diver \j &=f &&\text{in } Q, \\
\j &=\nabla u &&\text{in } Q, \\
u &= u_D &&\text{on } \Sigma_D, \\
\j \cdot \n &=g &&\text{on } \Sigma_N, \\
 u (0,\cdot) &=u_0 &&\text{in } \o.
\end{aligned}
\end{aligned}
\end{equation}
The mixed formulation~\eqref{2} has several advantages: it frequently reflects how the problem is generated (as the first equation in~\eqref{2} is in the form of a balance equation  and the second equation is the simplest example of a \emph{constitutive} equation describing how the flux~$\j$ and~$\nabla u$ are related); it is focused simultaneously on the quantities of interest, i.e. on~$u$ and the flux~$\j$; and it also serves as the starting point for the construction of certain numerical methods that are different from those designed for~\eqref{1P}.

The a~priori information associated with~\eqref{2}, the so-called energy (in)equality, provides a natural functional setting in which a robust mathematical theory should be developed. Taking for simplicity~$u_D=0$, the energy equality associated with~\eqref{2} takes the form
\begin{equation*}
\frac{1}{2}\|u(t)\|_2^2 + \it \io \j \cdot \nabla u \d x \d {\tau} = \it \io f u \d x \d {\tau} + \it \int_{\Gamma_N} g u \d S \d {\tau} + \frac{1}{2}\|u_0\|_2^2.
\end{equation*}
Using $\j = \nabla u$ twice, one observes that $\j \cdot \nabla u = \frac{|\j|^2}{2} + \frac{|\nabla u|^2}{2}$,  hence the energy equality leads to
\begin{equation}\label{3}
\|u(t)\|_2^2 + \it \|\j\|_2^2 \d {\tau} + \it \|\nabla u\|_2^2 \d {\tau} = 2 \it \io f u \d x \d {\tau} + 2 \it \int_{\Gamma_N} g u \d S \d {\tau} + \|u_0\|_2^2.
\end{equation}
It is well known that in the setting dictated by~\eqref{3}, the problem~\eqref{1P}, as well as~\eqref{2}, are well-posed. For example, the following theorem can be found in \cite[Theorem~10.1, page 616]{LSU}, see also~\cite[Chapter 2]{MaSt} for the elliptic version:
\begin{theorem}\label{LADY}
Let $\Omega \subset \R^d$ be a Lipschitz domain and $T>0$. Furthermore, assume that $u_0 \in L^2(\Omega)$, $u_D\in W^{1,2}(0,T; (W^{1,2}_{\Gamma_D}(\Omega))^*)\cap L^2(0,T; W^{1,2}(\Omega))$,  $f \in L^{2}(0,T; (W^{1,2}(\Omega))^*)$ and $g\in L^2(0,T; (W^{\frac12,2}(\partial \Omega))^*)$. Then, there exists a unique $u:Q\to\R$ fulfilling\footnote{Here, $W^{1,2}_{\Gamma_D}(\Omega)$ is the standard Sobolev space consisting of functions vanishing on $\Gamma_D$.}
\begin{align*}
u &\in  L^2(0, T; W^{1,2}(\Omega)) \cap  \C([0, T]; L^2(\Omega)), \\
\pt u &\in L^{2}(0, T; (W^{1,2}_{\Gamma_D}(\Omega)){^*}),
\end{align*}
such that $u=u_D$ on $\Sigma_D$ and for a.a. $t\in (0,T)$ there holds:
\begin{subequations}
\begin{equation}\label{WFSra}
\langle \pt u,  \varphi \rangle_{W^{1,2}_{\Gamma_D}(\Omega)} + \io \nabla u \cdot \nabla \varphi \d x  = \langle f,\varphi \rangle_{W^{1,2}_{\Gamma_D}(\Omega)} \quad  \textrm{ for all } \varphi\in W^{1,2}_{\Gamma_D}(\Omega).
\end{equation}
The initial condition is attained in the strong sense,  i.e.,
\begin{equation}\label{ICSra}
\lim_{t\to 0_+} \|u(t) - u_0\|_{L^2(\Omega)} = 0.
\end{equation}
\end{subequations}
\end{theorem}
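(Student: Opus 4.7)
My plan is to prove Theorem~\ref{LADY} by a classical Galerkin scheme combined with a Dirichlet lift, a strategy that will prefigure the more elaborate monotone-operator machinery the paper later develops. First, I would eliminate the inhomogeneous Dirichlet datum by setting $v := u - u_D$. Since $u_D\in W^{1,2}(0,T;(W^{1,2}_{\Gamma_D}(\Omega))^*)\cap L^2(0,T;W^{1,2}(\Omega))$, the Lions embedding gives $u_D\in \mathcal{C}([0,T];L^2(\Omega))$, so $v_0:=u_0-u_D(0,\cdot)\in L^2(\Omega)$ is a legitimate initial datum. The contribution $-\partial_t u_D + \Delta u_D$ (the latter defined via $\langle \Delta u_D,\varphi\rangle := -\io \nabla u_D\cdot\nabla\varphi\d x$) is absorbed into the source, which together with $f$ and the boundary datum $g$ (acting through the trace map) forms a single functional $\tilde f\in L^2(0,T;(W^{1,2}_{\Gamma_D}(\Omega))^*)$. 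It therefore suffices to solve the homogeneous-Dirichlet problem for~$v$.

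Next, fix an $L^2$-orthonormal basis $\{w_k\}_{k\in\N}$ of $W^{1,2}_{\Gamma_D}(\Omega)$ (e.g., eigenfunctions of the mixed Laplacian on $\Omega$), and seek $v^N(t):=\sum_{k=1}^N c_k^N(t)\,w_k$ satisfying the Galerkin identity against each $w_j$, $j\leq N$. This is a linear ODE system with $L^2$-in-time coefficients, uniquely solvable by Carath\'eodory theory. Testing by $v^N$ itself, then applying Young's inequality and Gronwall's lemma, yields uniform bounds
\begin{equation*}
\|v^N\|_{L^\infty(0,T;L^2(\Omega))} + \|v^N\|_{L^2(0,T;W^{1,2}(\Omega))} + \|\partial_t v^N\|_{L^2(0,T;(W^{1,2}_{\Gamma_D}(\Omega))^*)} \leq C,
\end{equation*}
the last by a duality argument exploiting the $W^{1,2}$-stability of the $L^2$-orthogonal projection onto $\operatorname{span}\{w_1,\dots,w_N\}$. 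Because the problem is linear, the reflexive weak/weak-$*$ compactness of these spaces suffices to pass to the limit in every term of the Galerkin identity, and density of $\bigcup_N \operatorname{span}\{w_1,\dots,w_N\}$ in $W^{1,2}_{\Gamma_D}(\Omega)$ extends the limit relation to~\eqref{WFSra}. The embedding $L^2(0,T;W^{1,2}_{\Gamma_D})\cap W^{1,2}(0,T;(W^{1,2}_{\Gamma_D})^*)\hookrightarrow \mathcal{C}([0,T];L^2(\Omega))$ (Aubin--Lions) then gives $v^N \to v$ in $\mathcal{C}([0,T];L^2(\Omega))$ and hence the strong attainment~\eqref{ICSra}.

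Uniqueness follows by a standard energy argument: the difference $w:=u_1-u_2$ of two solutions satisfies the weak formulation with vanishing data and $w(0)=0$; since $w\in L^2(0,T;W^{1,2}_{\Gamma_D}(\Omega))$ and $\partial_t w\in L^2(0,T;(W^{1,2}_{\Gamma_D}(\Omega))^*)$, the chain rule $\frac{d}{dt}\|w\|_{L^2}^2 = 2\langle \partial_t w, w\rangle$ applies; testing by $w$ itself and integrating yields $\|w(t)\|_{L^2}^2 + 2\it\|\nabla w\|_{L^2}^2 \d\tau = 0$, hence $w\equiv 0$. The principal technical subtlety in this plan is the Dirichlet lift: one must verify that the stated regularity of $u_D$ is exactly what is needed so that $u_D(0,\cdot)$ is well-defined in $L^2(\Omega)$ and that $-\partial_t u_D+\Delta u_D$ makes sense as a functional in $L^2(0,T;(W^{1,2}_{\Gamma_D}(\Omega))^*)$. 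Once this bookkeeping is in place, the linearity of the equation makes every remaining step standard.
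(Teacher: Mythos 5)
The paper does not prove Theorem~\ref{LADY}: it is quoted as a classical result, attributed to \cite[Theorem~10.1, page 616]{LSU} and \cite[Chapter 2]{MaSt}, so there is no in-paper proof to compare your argument against. That said, your Galerkin-plus-Dirichlet-lift scheme is the standard route and the overall logic is sound; the bookkeeping you flag as the main subtlety (that $u_D(0,\cdot)\in L^2(\Omega)$ is well-defined via the Lions embedding, and that $-\partial_t u_D + \Delta u_D$ together with $f$ and the Neumann datum $g$ acting through the trace assemble into a single functional $\tilde f\in L^2(0,T;(W^{1,2}_{\Gamma_D}(\Omega))^*)$) is indeed exactly what the stated hypotheses supply. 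One inaccuracy worth fixing: the embedding
$$
L^2(0,T;W^{1,2}_{\Gamma_D}(\Omega))\cap W^{1,2}(0,T;(W^{1,2}_{\Gamma_D}(\Omega))^*)\hookrightarrow \mathcal{C}([0,T];L^2(\Omega))
$$
is the Lions--Magenes integration-by-parts lemma for the Gelfand triple, not the Aubin--Lions compactness lemma; the latter gives strong convergence of the Galerkin sequence in $L^2(0,T;L^2(\Omega))$ but \emph{not} in $\mathcal{C}([0,T];L^2(\Omega))$, so your claim that $v^N\to v$ in $\mathcal{C}([0,T];L^2(\Omega))$ is unjustified (and, in general, not true without further argument). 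Fortunately you do not need it: for the strong attainment~\eqref{ICSra} it suffices to know $v\in\mathcal{C}([0,T];L^2(\Omega))$ (from Lions--Magenes) together with weak attainment $v(t)\rightharpoonup v_0$ in $L^2(\Omega)$ as $t\to 0_+$, obtained by testing the limit weak formulation against a time cut-off and passing to the limit --- a structure the paper itself uses in Step~5 of the proof of Theorem~\ref{result}. With that correction your proof is complete and consistent with the later monotone-operator machinery.
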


The aim of this study is to present a robust and possibly elegant mathematical theory for a class of problems similar to~\eqref{2}, with one remarkable difference, namely, the linear relation between the flux $\j$ and $\nabla u$ is replaced by an implicit constitutive equation
\begin{equation}\label{4}
\g(\j,\nabla u) = \0~\text{ in } Q,
\end{equation}
where $\g: \R^d \times \R^d \to \R^d$ is a given continuous (nonlinear) function. The key examples we have in mind, and which will be covered by the theory presented below, are listed in Table~\ref{tab:1}{, where, for~$z \in \R$, we use the notation~$z^+:= \max \{z, 0\}$.}
\renewcommand{\arraystretch}{1.5}
\begin{table}[h]
\label{tab:1}
\begin{tabular}{|l@{\hspace{.2in}}|l@{\hspace{.2in}}|}
\hline
$\j = \k(\nabla u)$ & $\nabla u = \k(\j)$ \\ \hline\hline
$\j = |\nabla u|^{p-2} \nabla u$ & $\nabla u = |\j|^{p'-2}\j$ \\ \hline
$\j =  (1+ |\nabla u|)^{p-2} \nabla u$ &  $\nabla u = (1+ |\j|)^{p'-2}\j$ \\ \hline
$\j = (1+ |\nabla u|^2)^{\frac{p-2}{2}} \nabla u$ &  $\nabla u = (1+ |\j|^2)^{\frac{p'-2}{2}}\j$ \\ \hline
$\j = (|\nabla u|-\delta_*)^+ \frac{\nabla u}{|\nabla u|}$ &  $\nabla u = (|\j|-\sigma_*)^+ \frac{\j}{|\j|}$ \\ \hline
\end{tabular}\vspace{.3cm}
\caption{The implicit relation~\eqref{4} contains two classes of explicit relations (the left and the right column) that include various power-law relations as well as relations with activations (degeneracies). Regarding the parameters, $p \in (1, +\infty)$, $p' = p/(p-1)$, and $\delta_*, \sigma_* >0$. The structure of these relations is motivated by the classification of incompressible fluid models presented in \cite{blechta2020}.}
\end{table}

Examples that belong to the class~\eqref{4}, and are covered by the theory presented below, but cannot be included into either column in Table~\ref{tab:1}, are sketched in Figure~\ref{figure1}. { These are simple examples that together with a class of equations of the form $\alpha(|\j|,|\nabla u|)\j = \beta(|\j|,|\nabla u|)\nabla u$ underlie the full strength of the implicit constitutive theory.}
\begin{figure}[h]
\begin{tabular}{ll}
\graphschody
\graphsmykala
\end{tabular}
\caption{{The relation on the left is a step function from the viewpoint of both $\j$ and $\nabla u$. It can be described as the $\sqrt{2}$-periodic zig-zag function with amplitude $\sqrt{2}/2$ rotated by $45$ degrees in the $(\j,\nabla u)$-plane. The relation on the right depicts one simple step followed by the linear relation $\j=\nabla u$. Both curves are continuous, but neither of them can be written in the form~$\j=\k(\nabla u)$ or~$\nabla u=\k(\j)$.}}
\label{figure1}
\end{figure}

%\begin{figure}[!tbp]
%  \begin{subfigure}[b]{0.4\textwidth}
%    \includegraphics[width=\textwidth]{graph1.eps}
%    %\caption{Flower one.}
%    %\label{fig:f1}
%  \end{subfigure}
%  \hfill
%  \begin{subfigure}[b]{0.4\textwidth}
%    \includegraphics[width=\textwidth]{graph1.eps}
%   % \caption{Flower two.}
%    %\label{fig:f2}
%  \end{subfigure}
% \caption{{ TODO two graphs - only in black - on the left unit simple step followed by linear relation $\j-\dd$ - on the right periodic unit step function. Both functions are continuous.}}
%\label{figure1}
%\end{figure}

%\begin{figure}[h]
%    \centering
%    \includegraphics[width=0.4\linewidth]{graph1.eps}
%\caption{{ TODO two graphs - only in black - on the left unit simple step followed by linear relation $\j-\dd$ - on the right periodic unit step function. Both functions are continuous.}}
%\label{figure1}
%\end{figure}

Note that Table~\ref{tab:1} includes relations that lead to the standard $p$-Laplace operator $\diver(|\nabla u|^{p-2} \nabla u)$ and their variants $\diver((1+ |\nabla u|^2)^{\frac{p-2}{2}} \nabla u)$ or $\diver((1+ |\nabla u|)^{p-2} \nabla u)$, however, it also contains less investigated forms, namely
\begin{equation*}
\diver \j~\text{  with  }~\nabla u = (1+ |\j|^2)^{\frac{p'-2}{2}}\j.
\end{equation*}
It also covers degenerate operators of type $\diver((|\nabla u|-\delta_*)^+ \frac{\nabla u}{|\nabla u|})$ as well as the ``multivalued" relations of the type
\begin{equation*}
\diver \j~\text{  with  }~\nabla u = (|\j|-\sigma_*)^+ \frac{\j}{|\j|},
\end{equation*}
which are more frequently written in the form
\begin{align*}
|\j| &\leq \sigma_* ~~~\Longleftrightarrow~~~\nabla u = \0, \\
|\j| &> \sigma_* ~~~\Longleftrightarrow ~~~\j = \sigma_* \frac{\nabla u}{|\nabla u|} + \nabla u.
\end{align*}

In the theory developed in this work we show that the implicit relation~\eqref{4} is fulfilled almost everywhere (i.e. point-wise) in $Q$
 provided that $\g$ fulfills the following conditions:
\begin{itemize}
\item[(g1)] $\g$ is Lipschitz continuous, i.e. $\g \in \mathcal{C}^{0,1}(\R^d \times \R^d)^d$, and $\g(\0,\0)=\0$;
\item[(g2)] for almost all $(\j, \dd) \in \R^d \times \R^d$:
\begin{equation*}
\qquad \quad \g_{\j}(\j,\dd)\ge 0, \quad \g_{\dd}(\j,\dd)\le 0, \quad \g_{\j}(\j,\dd)-\g_{\dd}(\j,\dd)>0 \quad \textrm{ and }\quad \g_{\dd}(\j,\dd)( \g_{\j}(\j,\dd))^T\le 0;
\end{equation*}
\item[(g3)] one of the following conditions holds:
\begin{equation*}
\text{either} \quad\forall \dd\in \mathbb{R}^d \quad \liminf_{|\j|\to +\infty} \g(\j, \dd) \cdot \j > 0 \quad \text{or} \quad \forall \j\in \mathbb{R}^{d}\quad \limsup_{|\dd|\to +\infty} \g(\j, \dd) \cdot \dd< 0;
\end{equation*}
\item[(g4)] for an arbitrary but fixed $p\in (1, \infty)$ there exist $c_1, c_2>0$ such that, for all $(\j,\dd)\in \R^d \times \R^d$ fulfilling~$\g(\j,\dd)=\0$, the following condition holds:
\begin{equation*}
\j \cdot \dd \ge c_1 (|\j|^{p'} + |\dd|^p) -c_2, \qquad p':= {p}/({p-1}).
\end{equation*}
\end{itemize}
In (g1)--(g4), we used the following notation. The mappings $\g_{\j}, \g_{\dd}: \mathbb{R}^d \times \mathbb{R}^d \to \mathbb{R}^{d\times d}$ are defined via
$$
\begin{aligned}
\g_{\j}(\j, \dd):= \frac{\partial \g(\j,\dd)}{\partial \j}\quad \textrm{ and } \quad
 \g_{\dd}(\j, \dd):= \frac{\partial \g(\j,\dd)}{\partial \dd},
\end{aligned}
$$
which, written component-wise, means (here, $\g=(g_1,\ldots, g_d)$, $\j=(j_1,\ldots, j_d)$, and $\dd=(d_1,\ldots, d_d)$)
$$
\begin{aligned}
(\g_{\j})_{ab}:= \frac{\partial g_a(\j,\dd)}{\partial j_b}\quad \textrm{ and } \quad
 (\g_{\dd})_{ab}:= \frac{\partial g_a(\j,\dd)}{\partial d_b}.
\end{aligned}
$$
Further, $(\g_{\dd})^T$ denotes the transpose matrix to $\g_{\dd}$ and  $\g_{\j}(\j,\dd)(\g_{\dd}(\j,\dd))^T$ is the standard matrix multiplication. Also, for any matrix $A\in \mathbb{R}^{d\times d}$, the expression $A\ge 0$ means that for any $\x\in \mathbb{R}^d$ there holds
$$
A\x \cdot \x\ge 0 \qquad \textrm{(which, written in terms of components, is $\sum_{i,j=1}^d A_{ij}x_i x_j \ge 0$)}.
$$
In addition, if we write $A>0$ then we mean that the above inequality is strict for all $\x\neq \0$. Also, since we can replace $\g$ by $-\g$, it is clear that all inequalities in (g2) and (g3) can be equivalently formulated with the opposite sign except the last inequality in (g2).

{ The Lipschitz continuity of $\g$ stated in (g1) is required in order to guarantee the (almost everywhere) existence of derivatives, which are used in (g2). It would be possible to require merely continuity of $\g$ and replace (g2) by the condition:
\begin{itemize}
\item[(g2)$^*$] {for any $(\j_1, \dd_1), (\j_2, \dd_2) \in \R^d \times \R^d$ satisfying~$\g(\j_i,\dd_i)=\0$, $i=1,2$, the following condition holds:}
\begin{equation*}
(\j_1 - \j_2) \cdot (\dd_1-\dd_2)\geq 0.
\end{equation*}
\end{itemize}
However, in the setting of implicit equations of the form \eqref{4}, it seems easier to check (g2) than to prove that (g2)$^*$ holds, unless the considered constitutive equation belongs to one of the explicit classes given in Table~\ref{tab:1}.}

The assumptions (g1)--(g4) are fulfilled by all constitutive equations listed in Table~\ref{tab:1}. As the assumptions (g1)--(g4) might not seem intuitive at the first sight, for the reader's convenience, we show the validity of (g1)--(g4) for a few selected constitutive equations listed above in Appendix~\ref{App1}. Defining $\balfa:=\{(\j,\dd)\in \mathbb{R}^d \times \mathbb{R}^d{;} \g(\j,\dd)=\0\}$, we will also show (see Section~\ref{maxmon}, Lemma~\ref{GvsA} and Lemma~\ref{onto}) that $\balfa$ is a maximal monotone $p$-coercive graph (see Section~\ref{maxmon} for the definitions). More precisely, the assumption (g2) implies that $\balfa$ is monotone and then (g3) in combination with the continuity of $\g$ guarantees that $\balfa$ is maximal monotone. We prefer to start with the assumptions (g1)--(g4) as they can be verified directly from a given form of $\g$ and this verification is easier than showing that the corresponding $\balfa$ is a maximal monotone $p$-coercive graph.

As said above, we aim to develop a robust (i.e. large data) theory for parabolic problems with implicit relations between $\j$ and $\nabla u$ of the form~\eqref{4} assuming that $\g$ fulfills (g1)--(g4). Since the tools we are using are not restricted to scalar problems, we develop the theory for general systems. Thus, instead of considering a vector valued $\g$, we impose our assumption on its tensorial analogue $\cG$. Hence, we assume that for some $p\in (1,\infty)$ and $N\in \mathbb{N}$, the function~$\cG$ and its derivatives $\cG_{\S}$ and $\cG_{\DD}$, defined in an analogous way as $\g_{\j}$ and $\g_{\dd}$, fulfill
\begin{itemize}
\item[(G1)] $\cG \in \mathcal{C}^{0,1}(\R^{{N\times d}} \times \R^{{N\times d}})^{{N\times d}}$ and $\cG(\0,\0)=\0$;
\item[(G2)] for almost all $(\S, \DD) \in \R^{{N\times d}} \times \R^{{N\times d}}$:
\begin{equation*}
\begin{split}
&\cG_{\S}(\S,\DD)\ge 0, \quad \cG_{\DD}(\S,\DD)\le 0, \quad \cG_{\S}(\S,\DD)-\cG_{\DD}(\S,\DD)>0, \\
&\textrm{and } \quad \cG_{\DD}(\S,\DD)(\cG_{\S}(\S,\DD))^T\le 0;
\end{split}
\end{equation*}
\item[(G3)] one of the following holds:
\begin{equation*}
\begin{aligned}
&\text{either} &&\forall \DD\in \mathbb{R}^{{N\times d}} \quad \liminf_{|\S|\to +\infty} \cG(\S, \DD) : \S > 0\\
&\text{or} &&\forall \S\in \mathbb{R}^{{N\times d}}\quad \limsup_{|\DD|\to +\infty} \cG(\S, \DD) : \DD< 0;
\end{aligned}
\end{equation*}
\item[(G4)] there exist $c_1, c_2>0$ such that for all $(\S,\DD)\in \R^{{N\times d}} \times \R^{{N\times d}}$ fulfilling~$\cG(\S,\DD)=\0$ we have
\begin{equation*}
\S :\DD \ge c_1 (|\S|^{p'} + |\DD|^p) -c_2.
\end{equation*}
\end{itemize}
Recall that the constitutive equation $\cG(\S,\DD)=\0$ can be replaced by $-\cG(\S,\DD)=\0$. Then all inequalities in (G2) and (G3) have the opposite signs except the last inequality in (G2). This ambiguity could be fixed for example by requiring that $\cG$ is such that the first condition in (G2) holds.

For simplicity, in what follows, we restrict ourselves to homogeneous boundary data, and then the vector-valued analogue of \eqref{1P} reads as follows:
\begin{equation}\label{problem}
\begin{aligned}
\text{For any given } &\Omega\subset \R^d,~T>0,~ {p\in(1, \infty),~} \u_0:\Omega \to \R^N,~ \f: Q\to \R^N  \text{ and } \\
&\cG:\R^{{N\times d}} \times \R^{{N\times d}} \to  \R^{{N\times d}} \textrm{ satisfying}
 \quad\text{(G1)--(G4)}, \\ \text{find a couple }   &\u: Q \to \R^N \text{ and } \S: Q \to \R^{{N\times d}}\text{ solving the problem }\\
&\begin{aligned}
\pt \u - \diver \S &= \f &&\text{in } Q, \\
\cG(\S, \nabla \u) &= \0 &&\text{in } Q, \\
\u &= \0 &&\text{on } \Sigma_D, \\
\S \n &= \0 &&\text{on } \Sigma_N, \\
 \u(0,\cdot) &= \u_0 &&\text{in } \Omega.
\end{aligned}
\end{aligned}
\end{equation}
Considering a system of equations is of real importance. Indeed, all models depicted in Table~\ref{tab:1} are of the so-called diagonal form. However, in many real applications, one has to deal with systems of equations that contain non-diagonal terms in order to capture observed physical effects. Maxwell--Stefan systems may serve as prototypical examples. The Maxwell--Stefan system describes the diffusive transport of multicomponent mixtures (see for example \cite{giovangigli1999}, \cite{Bothe2011}, \cite{jungel}; regarding the notation we follow \cite{jungel}), where the governing equations for the concentrations $u_{\nu}:(0,T)\times \Omega \to \R$, $0\le u_{\nu} \le 1$, take, for $\nu=1,\dots, N$, $N\ge 2$, the form
\begin{align}
\pt u_{\nu} - \diver \j_{\nu} &= r_{\nu}(\u),  \label{m-s1}\\ %\qquad \nu=1,\dots, N+1,
\nabla u_{\nu} &= \sum_{\mu=1, \mu\neq \nu}^{N} \alpha_{\mu \nu} \left( { u}_{\mu} \j_{\nu} - { u}_{\nu} \j_{\mu} \right). \label{m-s1*} %\qquad \nu=1,\dots, N+1.
\end{align}
Here, the constants $\alpha_{\nu \mu}$ are all positive for $\nu\neq \mu$ and $\u:=(u_1, \dots, u_{N})$. Denoting $\dd_{\nu}:=\nabla u_{\nu}$, $\DD:=(\dd_1, \dots, \dd_{N})^T$ and
$\S:=(\j_1, \dots, \j_{N})^T$, the equations \eqref{m-s1*} can be written in the form
\begin{align}\label{m-s2}
\DD &= \mathbb{B}(\u) \S,
\end{align}
where $\mathbb{B}$ is an $N\times N$-matrix. It is shown in Appendix~\ref{App2} that $\G:\R^{N\times d}\times\R^{N\times d}$ defined through
$$
\G(\S,\DD) = \mathbb{B}(\u) \S - \DD
$$
{fulfills} the conditions (G1)--(G3). Of course, since the matrix $\mathbb{B}$ and the right-hand side of the first set of equations in \eqref{m-s1} depends on the unknown $\u$, the theory developed below cannot be applied to \eqref{m-s1} and the existence analysis of the relevant initial-boundary-value problems must be done in a more delicate way, for which we refer to the above studies \cite{Bothe2011} and \cite{jungel}.  A thermodynamical basis for diffusive transport of multicomponent mixtures that go beyond Maxwell-Stefan systems \eqref{m-s2} and are more appropriate for the realistic description of mixtures and that belong to the class of fully implicit relations is developed in a recent study~\cite{BoDr20}.

We conclude this introductory section by formulating freely the main result of this study:
\begin{align*}
  &\textrm{ For any } \Omega\subset \R^d,~T>0,~{p \in (1,\infty),}~ \u_0,~\f \textrm{ and } \cG \textrm{ satisfying (G1)--(G4),} \\
	&\textrm{ there is a pair } (\u, \S) \textrm{ solving } \eqref{problem}.
\end{align*}

The structure of the remaining parts of the paper is the following. In Section \ref{Sec-2} we provide the precise formulation of our main result and summarize its novelties. We also formulate an analogous result for the boundary-value problem in the elliptic (i.e. time-independent) case. Then, in Section~\ref{maxmon}, we recall the concept of maximal monotone $p$-coercive graph and establish its connection to the implicit constitutive equation $\cG(\S,\DD)=\0$. In particular, we show that the assumptions (G1)--(G4) imply that the null points of $\cG$ form a maximal monotone $p$-coercive graph. In Section \ref{graph-eps}, we construct the appropriate approximating $2$-coercive graphs, parametrized by $\varepsilon$, that are shown to be Lipschitz continuous and uniformly monotone mappings. { In fact, we offer three possible constructions of such approximations. These constructions are} made very explicitly by using an algebraic structure of monotone graphs.  We then investigate the convergence properties between the maximal monotone $p$-coercive graphs and { two of the suggested} approximations and we also add a few additional results, which are useful on their own, to this section. Then, in Section~\ref{proofs}, we prove the main theorem using the approximation of the null points of $\cG$ by the Lipschitz continuous and uniformly monotone single-valued mappings constructed and analyzed in the previous section and letting the approximation parameter $\varepsilon$ tend to zero. The relevant standard theory regarding the well-posedness of the approximate problems, based on the classical Minty method~\cite{Minty}, is, for the sake of completeness, proved in Appendix~\ref{App3}. As indicated above, in Appendices~\ref{App1} and~\ref{App2}, we focus on several constitutive equations that belong to the class $\cG(\S,\DD)=\0$ and verify that they fulfill the structural assumptions (G1)--(G4).

\section{Main result} \label{Sec-2}

Before we state the main result of the paper, we fix some notation.  We recall that throughout the whole paper $\o \subset \R^d$ is a Lipschitz domain (with two mutually non-intersecting essential parts $\Gamma_D$ and $\Gamma_N$ of the boundary $\partial \o$), as defined in Section \ref{intro} after \eqref{1P}. For $t \in (0,T]$, we denote $Q_t:=[0,t) \times \o$ and we also set $Q := Q_T$.
%{\sout{The use of ``$\subset$" is not restricted to proper subsets but admits also preserving the whole set. To exclude the latter option, we use ``$\subsetneq$".}}
The abbreviation \emph{a.a.}~$t$ stands for \emph{almost all}~$t$. % and $\Sigma_{D,t}:= [0,t) \times \Gamma_D$ and $\Sigma_{N,t}:= [0,t) \times \Gamma_N$. We also denote $Q := Q_T$.

We employ small boldfaced letters to denote vectors and bold capitals for tensors. We do not relabel the original sequence when selecting a subsequence. The symbols $\j\cdot\dd$ and $\S:\DD$ stand for the scalar product of vectors $\j$ and $\dd$ or tensors $\S$ and $\DD$, respectively. In a time-space domain, the standard differential operators, such as gradient ($\nabla$) and divergence ($\diver$), are always related to the spatial variables only. Also, we use standard notation for partial ($\partial_{\cdot}$) and total ($\frac{d}{d \cdot}$) derivatives. Generic constants, that depend only on data, are denoted by~$C$ and may vary from line to line.

For a Banach space~$X$, its dual is denoted by $X^*$. For $x \in X$ and $x^*\in X^*$, the duality is denoted by $\langle x^*, x \rangle_X$. For $p \in [1,\infty]$, we denote $(L^p(\o), \|\!\cdot\!\|_{L^p(\Omega)})$ and $(W^{1,p}(\o),\|\!\cdot\!\|_{W^{1,p}(\Omega)})$ the corresponding Lebesgue and Sobolev spaces with the norms defined in standard way,
\begin{align*}
\| f \|_{L^p(\o)} &:=
\begin{cases}
\left( \io |f|^p \d x \right)^{\frac{1}{p}} &\text{if } p \in [1, \infty), \\
\esssup_{x \in \o} |f(x)| &\text{if } p = \infty,
\end{cases} \\
\hspace{2cm}\| f \|_{W^{1,p}(\o)} &:= \|f\|_{L^p(\o)} +  \|\nabla f \|_{L^p(\o)}.
\end{align*}
Bochner spaces are denoted by $L^p(0,T;X)$ and we set
\begin{equation*}
\C([0,T];X) := \{f \in L^{\infty}(0,T;X); [0,T] \ni t^n \to t \implies f(t^n) \to f(t)~\text{ strongly in } X\}.
\end{equation*}
We use the notation $L^p(\o;\R^N)$ and $L^p(\o;\R^{{N\times d}})$ for Lebesgue spaces of vector- or matrix-valued functions, respectively.

Next, we define the function spaces related to our setting. We set
\begin{equation}
\begin{aligned}
V_p &:= \{\u; \u\in W^{1,p}(\o; \R^N) \cap L^2(\o;\R^N), \; \u=\0 \textrm{ on } \Gamma_D\},\\
H&:=L^2(\o; \R^N),\\
V_p^* &:= (V_p)^*,
\end{aligned}\label{spaces}
\end{equation}
and equip the space $V_p$ {with} the norm\footnote{{ Note that in case $p\ge 2d/(d+2)$, we have, due to the Sobolev embedding and the Poincar\'{e} inequality, that $V_p=W^{1,p}_{\Gamma_D}(\Omega;\R^N)$, where $ W^{1,p}_{\Gamma_D}(\Omega;\R^N):=\{\u; \u\in W^{1,p}(\o; \R^N), \; \u=\0 \textrm{ on } \Gamma_D\}.$}} $\|\u\|_{V_p}:= \|\nabla \u\|_{L^p(\o{;\R^{N\times d}})} + \|\u\|_{L^2(\o{;\R^{N}})}$. Then, for any $p\in (1,\infty)${,}
\begin{equation}\label{gelfand}
V_p \hookrightarrow H \equiv H^*  \hookrightarrow V_p^*
\end{equation}
and both embeddings are continuous and dense. Therefore, these spaces form a Gelfand triple. For simplicity, we also set $V:=V_2$ and $V^*:=V_2^*$. Note that $V$ and $H$ are Hilbert spaces.  Also, the duality in $V_p$ is defined via
\begin{equation}\label{dualityVf}
\langle \f, \vp \rangle_{V_p}:= \lim_{k\to +\infty} \io \f^k \cdot \vp \d x
\end{equation}
for any $\vp \in V_p$, where $\{\f^k\}_{k\in \mathbb{N}}$ is a sequence in $H$ converging to $\f$ in $V^*_p$. Note that in the case when $\f\in L^2(\o;\R^N)$, this definition just means
\begin{equation}\label{fvL2}
\langle \f, \vp \rangle_{V_p}=\io \f\cdot \vp \d x.
\end{equation}

Having introduced the notation, we can now formulate the main result of the paper.
\begin{theorem}\label{result}
Let $\Omega \subset \R^d$ be a Lipschitz domain, $T>0$ and $p\in(1,\infty)$. Let $\f \in L^{p'}(0,T; V_p^*)$ and $\u_0 \in H$. Assume that $\G$ satisfies (G1)--(G4). Then, there exists a weak solution to the problem~\eqref{problem}, i.e. there exists a couple $(\u, \S)$ fulfilling
\begin{align*}
\u &\in  L^p(0, T; V_p) \cap  \C([0, T]; H), \\
\pt \u &\in L^{p'}(0, T; V_p^*),\\
\S &\in L^{p'}(Q; \R^{{N\times d}}),
\end{align*}
so that
\begin{subequations}
\begin{align}\label{WFSr}
\langle \pt \u,  \vp \rangle_{V_p} + \io \S : \nabla \vp \d x  &= \langle \f,\vp \rangle_{V_p} \quad \textrm{ for a.a. } t\in (0,T) \textrm{ and for all } \vp\in V_p, \\
\G(\S,\nabla \u)&=\0 ~\textrm{ almost everywhere in }Q,\label{pepa_3ce}
\end{align}
and the initial condition is attained in the strong sense, i.e.,
\begin{equation}\label{ICSr}
\lim_{t\to 0_+} \|\u(t) - \u_0\|_H = 0.
\end{equation}
In addition, $\u$ is uniquely determined.
\end{subequations}
\end{theorem}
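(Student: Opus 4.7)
The plan is a three-step monotone-operator scheme: (i) replace the graph $\balfa := \{(\S,\DD) : \G(\S,\DD) = \0\}$ by a single-valued, Lipschitz-continuous, uniformly $2$-monotone approximation, (ii) solve the resulting smoother parabolic system by classical means, (iii) pass to the limit in the approximation parameter via a generalized Minty argument; uniqueness of $\u$ is then an immediate monotonicity calculation. By Lemma~\ref{GvsA} and Lemma~\ref{onto} the set $\balfa$ is a maximal monotone $p$-coercive graph, and Section~\ref{graph-eps} provides an explicit family of single-valued Lipschitz mappings $\S^\varepsilon = \tilde{\G}^\varepsilon(\nabla \u^\varepsilon)$, parametrised by $\varepsilon > 0$, that are uniformly monotone, converge to $\balfa$ in a suitable graph sense as $\varepsilon \to 0_+$, and inherit the $p$-coercivity (G4) in an $\varepsilon$-independent form. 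For fixed $\varepsilon$ the approximate problem is standard monotone parabolic with Lipschitz $2$-coercive nonlinearity, and its unique weak solution $\u^\varepsilon \in L^2(0,T;V) \cap \C([0,T];H)$, $\pt \u^\varepsilon \in L^2(0,T;V^*)$, is provided by the Galerkin plus Minty machinery of Appendix~\ref{App3}.

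For the $\varepsilon$-uniform bounds, I would test the approximate equation by $\u^\varepsilon$; the $p$-coercivity of the approximating graph together with Young's inequality on the right-hand side yields
\begin{equation*}
\sup_{t \in [0,T]} \|\u^\varepsilon(t)\|_H^2 + \int_0^T \!\Bigl(\|\nabla \u^\varepsilon\|_{L^p(\Omega)}^p + \|\S^\varepsilon\|_{L^{p'}(\Omega)}^{p'}\Bigr) \d \tau \le C
\end{equation*}
with $C$ depending only on the data; the bound on $\pt \u^\varepsilon$ in $L^{p'}(0,T;V_p^*)$ follows directly from the equation. Extracting a subsequence gives $\u^\varepsilon \rightharpoonup \u$ in $L^p(0,T;V_p)$, $\pt \u^\varepsilon \rightharpoonup \pt \u$ in $L^{p'}(0,T;V_p^*)$, and $\S^\varepsilon \rightharpoonup \S$ in $L^{p'}(Q;\R^{N \times d})$; Aubin--Lions then yields $\u^\varepsilon \to \u$ strongly in $L^2(Q)$, and the energy identity upgrades this to $\u^\varepsilon(t) \to \u(t)$ in $H$ for every $t \in [0,T]$. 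Passing to the limit in the linear momentum equation produces \eqref{WFSr}, and \eqref{ICSr} is built into $\u \in \C([0,T];H)$.

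The hard step is identifying \eqref{pepa_3ce} from merely weak limits, since $\G$ is nonlinear and the limit graph is in general multi-valued. I would use a generalized Minty argument. First, the limsup energy inequality
\begin{equation*}
\limsup_{\varepsilon \to 0_+} \int_0^T \!\!\int_\Omega \S^\varepsilon : \nabla \u^\varepsilon \, \d x \, \d \tau \;\le\; \int_0^T \!\!\int_\Omega \S : \nabla \u \, \d x \, \d \tau
\end{equation*}
follows by testing the approximate and limit PDEs by $\u^\varepsilon$ and $\u$ respectively and using the strong convergence $\u^\varepsilon(T) \to \u(T)$ in $H$ to control the $\pt \u$ contribution. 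Second, for any test pair $(\S^*, \DD^*) \in \balfa$ the monotonicity of the $\varepsilon$-graph---with an $o_\varepsilon(1)$ correction expressing that $(\S^*, \DD^*)$ is approached by points on the approximating graph---gives
\begin{equation*}
\int_Q (\S^\varepsilon - \S^*) : (\nabla \u^\varepsilon - \DD^*) \, \d x \, \d \tau \;\ge\; -o_\varepsilon(1).
\end{equation*}
Combining these two displays yields $\int_Q (\S - \S^*) : (\nabla \u - \DD^*) \, \d x \, \d \tau \ge 0$ for every $(\S^*, \DD^*) \in \balfa$; localisation to arbitrary measurable subsets of $Q$ (taking $(\S^*,\DD^*)$ piecewise constant) upgrades this to the pointwise monotonicity inequality $(\S(t,x) - \S^*) : (\nabla \u(t,x) - \DD^*) \ge 0$ for a.e.\ $(t,x) \in Q$ and all $(\S^*, \DD^*) \in \balfa$. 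The maximal monotonicity of $\balfa$ then forces $(\S(t,x), \nabla \u(t,x)) \in \balfa$ a.e., i.e.\ $\G(\S, \nabla \u) = \0$ a.e.\ in $Q$.

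Finally, uniqueness of $\u$ is an immediate consequence of the graph monotonicity: for two weak solutions $(\u_1, \S_1)$, $(\u_2, \S_2)$ with the same data, subtracting the equations, testing by $\u_1 - \u_2 \in V_p$, and integrating in time produces
\begin{equation*}
\tfrac{1}{2} \|\u_1(t) - \u_2(t)\|_H^2 + \int_0^t \!\!\int_\Omega (\S_1 - \S_2) : (\nabla \u_1 - \nabla \u_2) \, \d x \, \d \tau = 0;
\end{equation*}
since $(\S_i, \nabla \u_i) \in \balfa$ for $i=1,2$ and $\balfa$ is monotone, the space-time integral is nonnegative, forcing $\u_1 \equiv \u_2$. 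The flux $\S$ itself need not be unique when $\balfa$ contains vertical segments, which is consistent with the theorem asserting uniqueness only for $\u$.
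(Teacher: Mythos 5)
Your overall architecture is the same as the paper's — approximate the null set of $\G$ by Lipschitz, uniformly monotone, $2$-coercive single-valued mappings, solve the regularised problem by the Hilbert-space theory in Appendix~\ref{App3}, and close with a Minty-type identification of the limit graph — but there are two genuine gaps in the middle of the argument that would derail an attempt to carry it through.

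First, your claimed $\varepsilon$-uniform bound
\[
\int_0^T\bigl(\|\nabla \u^\varepsilon\|_{L^p(\Omega)}^p + \|\S^\varepsilon\|_{L^{p'}(\Omega)}^{p'}\bigr)\,\ddd\tau \le C
\]
is false unless $p=2$. The approximating graphs $\Aee$ are $2$-coercive for fixed $\varepsilon$, and the best \emph{$\varepsilon$-uniform} coercivity they inherit from the underlying $p$-coercive graph is precisely Lemma~\ref{lemaodhmin}: $\Se : \De \ge \tilde{C}_1\bigl(|\Se|^{\min\{p',2\}} + |\De|^{\min\{p,2\}}\bigr) - \tilde{C}_2$. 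So if $p>2$ you only control $\nabla\u^\varepsilon$ in $L^2$, and if $p<2$ you only control $\S^\varepsilon$ in $L^2$; the genuine $L^p$/$L^{p'}$ integrability of the limit pair is recovered only \emph{after} the graph identification, via the convergence part of Lemma~\ref{Agraf}. This is not a cosmetic issue: your plan then pairs $\f \in L^{p'}(0,T;V_p^*)$ against $\u^\varepsilon$, but $\u^\varepsilon$ is only known to lie in $L^{\min\{p,2\}}(0,T;V_{\min\{p,2\}})$, so the duality bracket you need does not a priori make sense. The paper resolves exactly this by first proving the result for the smaller class $\f\in L^{\mu'}(0,T;V_\mu^*)$ with $\mu=\min\{p,2\}$, where the duality is legitimate, and then removing that restriction by a density argument (Step 8). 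Your proposal omits this two-tier structure entirely. The same defect infects your $\pt\u^\varepsilon$ estimate: with $\S^\varepsilon$ only uniformly in $L^{\min\{p',2\}}$, the equation yields $\pt\u^\varepsilon$ bounded uniformly in $L^{\nu}(0,T;V_{\nu'}^*)$ with $\nu=\min\{p',2\}$, not in $L^{p'}(0,T;V_p^*)$; the $L^{p'}(0,T;V_p^*)$ regularity of $\pt\u$ is a post-limit improvement (Step 4 of the paper's proof), not an $\varepsilon$-uniform estimate.

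Second, your compactness step invokes Aubin--Lions to get $\u^\varepsilon\to\u$ strongly in $L^2(Q)$ and then ``upgrades'' to $\u^\varepsilon(t)\to\u(t)$ in $H$ for every $t$. Aubin--Lions needs the compact embedding $V_p\hookrightarrow\hookrightarrow H$, which fails for $p\le 2d/(d+2)$; the paper explicitly flags this and supplies a different argument (averaging over small time windows and using weak lower semicontinuity of the $H$-norm) to obtain the only inequality actually required, namely $\|\u(t)\|_H^2 \le \liminf_{\varepsilon\to 0_+}\|\u^\varepsilon(t)\|_H^2$. Your claim of pointwise-in-time strong $H$-convergence is both unjustified and stronger than needed. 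Once these two gaps are repaired, the identification of the constitutive relation via the limsup energy inequality and maximal monotonicity of $\balfa$, and the uniqueness argument for $\u$, go through essentially as you describe and as in the paper.
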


Several comments regarding this result and its novelties are in order:

\bigskip

{\bf \emph{(i)}} \ As explicit constitutive equations (as those listed in Table~\ref{tab:1}) represent important subparts of implicit constitutive equations, there are plenty of (even classical) examples in various areas of science (solid and fluid mechanics, heat transfer, chemistry, electro-magnetism, etc.), including Hooke's, Fourier's, Fick's laws and their various non-linear generalizations, that are covered by the equation $\cG(\S,\DD) = \0$. The systematic study of implicit constitutive equations is however more recent and goes back to works by Rajagopal (see \cite{krr1,krr2} for the original papers in elasticity and fluid mechanics\footnote{Note that when reducing the governing equations for incompressible implicitly constituted fluids to simple shear flows one obtains a scalar version of the problem \eqref{problem} studied here.}, and also the survey papers \cite{BMRS2014,KRRS2016} and Section 4.5 in \cite{MP2018}). This new viewpoint {at} constitutive theory in terms of implicit equations has stimulated the analysis of general problems in fluid mechanics in \cite{BGMS1,BGMS2,BGMS3}, where both the stationary and evolutionary situations have been treated{, incorporating even the possibility that the constitutive equation varies with~$t$ and~$x$, i.e. instead of \eqref{pepa_3ce}, considering $\cG(t,x,\S,\nabla \u)=\0$ a.e. in $Q$}. However, there are, in our opinion, two shortcomings in the theory developed in \cite{BGMS1,BGMS2,BGMS3} for incompressible fluid flow problems and in \cite{BMZ} for flows in porous media, and used in some subsequent studies.

{ First,} the proof is highly nonconstructive as it applies standard mollification (to the selection) by convolution, which is very hard to implement numerically, see the recent studies \cite{DieningKreuzerSuli_2013,KreuzerSuli_2016,SuliTscherpel_2020,FarrellOrozcoSuli_2020} devoted to the analysis of finite element discretizations of implicitly constituted fluid flow problems. In our proof below, we do not use convolution at all. Instead{,} we introduce { two very simple algebraic modifications/approximations of $\G$, which are easy to implement.}
In addition, these approximation schemes always lead to a setting in Hilbert spaces on which the approximation graph is Lipschitz continuous and uniformly monotone, which { is the simplest nonlinear setting for PDE and numerical analysis and computation}. { Both approximations differ from the Yosida approximation and the approximation used in Francfort et al. \cite{FrMuTa04}. We provide more details in Section \ref{graph-eps}.}

{ Second, the general theory developed in \cite{BGMS1,BGMS2} for constitutive equations of the type $\cG(t,x,\S,\DD)=\0$ assumes the existence of a \emph{Borel measurable} selection, and the whole proof stems from the assumed a~priori existence of such a selection. More precisely, in the works \cite{BGMS1,BGMS2}, the existence of a selection that is Lebesgue measurable with respect to $(t,x)$ and Borel measurable with respect to one of the tensorial quantities $\S$ or $\nabla \u$ is a~priori granted by the assumption (A5), see \cite[page 110]{BGMS1} or \cite[page 2765]{BGMS2} and this is then substantially used for the construction of the solution. In this study, we do not require the existence of a measurable selection and we do not even use the selection operator at all, but we build the whole theory on the explicit forms for $\varepsilon$-approximations of a general implicit constitutive equation. Moreover, and this seems to be the most essential improvement, the way how we build a solution for a $(t,x)$-independent graph can be easily extended also to a $(t,x)$-dependent graph generated by $\cG(t,x,\S,\DD)=\0$. We conjecture that if the function $\cG$ is measurable with respect to time and space for all $\S$, $\DD$ and continuous with respect to the third and fourth variable a.e. in $Q$ (i.e. $\cG$ is  a Carath\'{e}odory function), then the existence of a weak solution to the initial-boundary-value problem can be established with no requirement on the existence of a measurable selection. Note that the assumption that $\cG$ is a Carath\'{e}odory function is usually easy to check, while the assumption (A5) in \cite{BGMS1,BGMS2} or the assumption concerning the existence of the Lipschitz mapping $\varphi$ in \cite[Assumptions (2.2)--(2.5)]{FrMuTa04} may be more difficult to verify.}

\smallskip
{\bf \emph{(ii)}} \ Although our proof uses the concepts of monotone and maximal monotone mappings/graphs, we formulate the result without using these terms. This is due to the fact that we have found easy-to-verify conditions on the function $\cG$, see the conditions (G1)--(G4)  (almost) characterizing that the corresponding $\mathcal{A}$ is a maximal monotone $p$-coercive graph{, see the last part of Section~\ref{maxmon} for details.}

\smallskip
{\bf\emph{(iii)}} \ If we identify the null points of $\G$ with a set $\mathcal{A}$, a subset of the Cartesian product $\R^{{N\times d}} \times \R^{{N\times d}}$, then one can reformulate the problem in terms of~$\mathcal{A}$, which leads to the theory of monotone mappings. This theory goes back to the seminal work~\cite{Minty}. This theory when  further extended to the analysis of partial differential equations or to problems of the calculus of variations has however relied on sticking to the assumption that the flux is a (possibly multivalued) function of the gradient of the unknown function. {This} concerns mostly the first row in Table~\ref{tab:1}. Several concepts such as multivalued sets, subdifferential calculus, variational inequalities, differential inclusions, etc. have been used to set-up a rigorous mathematical background for relevant problems. One of the aims of this study is to avoid using such concepts and provide, what is in our opinion, a more simple mathematical description; see also the point (iv) below. %. There are many interesting works and theories, which extend Minty's theory to possibly multivalued mappings.  %TODO NEXT SENTENCE SHOULD BE CLARIFIED On the abstract level, we refer to \cite{AlbAmb}, which is however related to minimization of certain convex functionals, where the unknown is a measure (it would correspond to the case $p=1$ or $p=\infty$ in our paper for certain models).

\smallskip
{\bf \emph{(iv)}} \ There are many results where the null points of $\G$ are assumed to be described by a convex potential. To be more precise, if one assumes that there exist a convex $\Phi:\mathbb{R}^{{N\times d}}\to \R$ and its convex conjugate $\Phi^*:\R^{{N\times d}} \to \R$ such that
\begin{equation} \label{potential}
\G(\S,\DD)=\0 \qquad\Longleftrightarrow \qquad \S : \DD = \Phi(\S)+\Phi^*(\DD)
\end{equation}
then the condition ``$\G(\S,\nabla \u)=\0$ almost everywhere in $Q$" stated in Theorem~\ref{result}, can be equivalently\footnote{{Indeed, if we set $\vp:=\u$ in \eqref{WFSr} and compare it with \eqref{foridiots}, we see that $\io \Phi(\S)+\Phi^*(\nabla \u) \d x \le \io \S:\nabla \u \d x$. Then, due to Young's inequality, one concludes that $\Phi(\S)+\Phi^*(\nabla \u) =\S:\nabla \u$ a.e. in $\Omega$, and consequently the assumption \eqref{potential} gives $\G(\S,\nabla \u)=\0$ a.e. in $\Omega$.}} replaced by the following inequality
\begin{align}\label{foridiots}
\frac12 \dt \|\u\|_{H}^2+ \io \Phi(\S)+\Phi^*(\nabla \u) \d x  \le \langle \f,\nabla \u \rangle_{V_p}.
\end{align}
It is then obvious that due to the convexity of~$\Phi$ and~$\Phi^*$, one can usually pass to the inequality~\eqref{foridiots} without any major difficulties. Unfortunately, such a procedure works only in the case~of~\eqref{potential}, which decreases the applicability of such a theory significantly. The second (and more important) limitation of this approach is that, for consistency, it requires the possibility of using~$\u$ as a test function in~\eqref{WFSr}, which is typically not the case in problems arising in fluid dynamics, see e.g. \cite{AF}, where an inequality of the type~\eqref{foridiots} is used to define the notion of solution.

\smallskip
{\bf \emph{(v)}} We wish to emphasize that there are interesting constitutive equations of the form $\cG(\S,\DD)= \0$ that generate a \emph{non-monotone} graph and consequently the analysis of the corresponding problems is not covered by the theory developed in this paper. In fact, a sounding analysis for problems with non-monotone graphs is a challenging open problem. We refer to~\cite{LerouxKRR,JMPT_2019} for more details.

\smallskip {
{\bf \emph{(vi)}} The subclasses of constitutive equations listed in Table~\ref{tab:1} were taken from Blechta et al.~\cite{blechta2020} where a classification of incompressible fluids has been made and where in addition a PDE analysis (a long-time and large-data existence theory) was developed for one novel class of fluids that emerge from this classification, namely for activated Euler fluids. (These are fluids that behave as Euler (inviscid) fluids until the activation takes place and then the fluid responds as a Navier-Stokes fluid or, more generally, a fluid of a power-law type.) The study~\cite{blechta2020} differs significantly from the results presented in this paper. First of all, to construct an approximation of one specific constitutive model is always easier than to build the approximations for a general class of implicit equations. In fact, the construction of approximation for activated Euler fluids is simple: one adds a viscous stress term for a Newtonian fluid with small viscosity $\varepsilon>0$ to the model. In addition, both activated Euler fluids as well as their approximations are examples of explicit models where the Cauchy stress depends the velocity gradient explicitly. This makes the analysis, in comparison with fully implicit constitutive equations, simpler. On the other hand, Blechta et al.~\cite{blechta2020} studied models where besides a nonlinear constitutive relation there is an additional nonlinearity due to the convective term. In \cite{blechta2020}, a mathematical theory for steady and unsteady flows of activated Euler fluids, subject to different types of boundary conditions, is developed for a large set of parameters $p$, namely $p>6/5$, characterizing the behavior of fluids for large values of the velocity gradient. Special and systematic attention is also devoted to the boundary conditions. In this study,
focusing on a potentially broader class of applications, we go beyond the realm of the fluid mechanics problems (by omitting the incompressibility constraint and the convective term), but we pay particular attention on the development of the analytical techniques for a general class of implicit equations.}
\bigskip

We complete this section by stating the result for the time-independent case. We however do not give the complete proof of this result here since it is easier than in the time-dependent situation and in fact the proof can be deduced from the detailed proof of Theorem \ref{result} directly by eliminating the steps that are {there} due to the dependence of the quantities on time.
\begin{theorem}
\label{result-2}
Let $\Omega \subset \R^d$ be a Lipschitz domain, $\Gamma_D\neq \emptyset$, $p\in(1,\infty)$ and  $\f \in (W^{1,p}_{\Gamma_D}(\o; \R^N))^*$. Assume that $\G$ satisfies (G1)--(G4). Then, there exists a couple $(\u, \S)$ fulfilling
\begin{align*}
\u &\in W^{1,p}_{\Gamma_D}(\Omega;\R^N), \\
\S &\in L^{p'}(\Omega; \R^{{N\times d}}),\\
\G(\S,\nabla \u)&=\0 \textrm{ almost everywhere in }\Omega,
\end{align*}
which satisfies for all $\vp\in W^{1,p}_{\Gamma_D}(\Omega;\R^N)$
\begin{subequations}
\begin{equation}\label{WFSr-s}
 \io \S : \nabla \vp \d x  = \langle \f,\vp \rangle_{W^{1,p}_{\Gamma_D}(\Omega;\R^N)}.
\end{equation}
\end{subequations}
In addition, if $\G$ satisfies
\begin{itemize}
\item[(G2)$^*$] for any $(\S_1, \DD_1), (\S_2, \DD_2) \in \R^{{N\times d}} \times \R^{{N\times d}}$ fulfilling~$\cG(\S_i,\DD_i)=\0$ and  $\DD_1\neq \DD_2$:
\begin{equation*}
(\S_1 - \S_2):(\DD_1-\DD_2)> 0,
\end{equation*}
\end{itemize}
then the solution $\u$ is unique.
\end{theorem}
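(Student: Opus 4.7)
The plan is to mimic the approximation-and-limit-passage strategy used for Theorem~\ref{result}, with the considerable simplification that there is no time variable. First, I invoke Section~\ref{graph-eps} to replace the null set $\mathcal{A}:=\{(\S,\DD)\in\R^{N\times d}\times\R^{N\times d}:\G(\S,\DD)=\0\}$ by a Lipschitz continuous, uniformly (strictly) monotone single-valued graph $\mathcal{A}^\varepsilon$ that is $2$-coercive and inherits the $p$-coercivity estimate of~(G4) uniformly in~$\varepsilon$. The regularized problem
\[
\io \S^\varepsilon : \nabla \vp \d x = \langle \f,\vp\rangle \qquad \forall\, \vp \in W^{1,2}_{\Gamma_D}(\Omega;\R^N),\qquad (\S^\varepsilon,\nabla\u^\varepsilon)\in\mathcal{A}^\varepsilon\text{ a.e.},
\]
is then a standard monotone-operator equation in a Hilbert space and possesses a unique solution $\u^\varepsilon\in W^{1,2}_{\Gamma_D}(\Omega;\R^N)$ by the classical Minty--Browder scheme recalled in Appendix~\ref{App3}.

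Testing with $\vp=\u^\varepsilon$ and invoking the uniform $p$-coercivity of $\mathcal{A}^\varepsilon$, combined with Young's and Poincar\'{e}'s inequalities (the latter valid because $\Gamma_D\neq\emptyset$), yields bounds on $\u^\varepsilon$ in $W^{1,p}_{\Gamma_D}(\Omega;\R^N)$ and on $\S^\varepsilon$ in $L^{p'}(\Omega;\R^{N\times d})$ that are independent of~$\varepsilon$. After extracting (non-relabelled) subsequences I obtain weak limits $\u^\varepsilon\rightharpoonup\u$ and $\S^\varepsilon\rightharpoonup\S$ in these respective spaces, and the linearity of the balance equation lets me pass to the limit directly to recover~\eqref{WFSr-s}. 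The main obstacle is then the identification $\G(\S,\nabla\u)=\0$ almost everywhere, which I would carry out by a Minty-type argument. Since $-\diver\S^\varepsilon=\f=-\diver\S$ in $(W^{1,p}_{\Gamma_D}(\Omega;\R^N))^*$, the energy identity
\[
\io \S^\varepsilon : \nabla \u^\varepsilon \d x = \langle \f,\u^\varepsilon\rangle \longrightarrow \langle \f,\u\rangle = \io \S : \nabla \u \d x
\]
upgrades the weak convergence to convergence of this ``duality product''. Combining this with the monotonicity of $\mathcal{A}^\varepsilon$ and its convergence to the maximal monotone $p$-coercive graph $\mathcal{A}$ established in Sections~\ref{maxmon} and~\ref{graph-eps}, the standard pointwise monotonicity argument (testing against arbitrary admissible pairs from $\mathcal{A}$ and invoking maximality) produces $(\S(x),\nabla\u(x))\in\mathcal{A}$ a.e., which is exactly the constitutive relation.

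For uniqueness under (G2)$^*$, I take two solutions $(\u_1,\S_1)$ and $(\u_2,\S_2)$, subtract their weak formulations, and test with the admissible $\vp=\u_1-\u_2$, obtaining
\[
\io (\S_1-\S_2):(\nabla\u_1-\nabla\u_2)\d x = 0.
\]
Since $\G(\S_i,\nabla\u_i)=\0$ a.e., (G2)$^*$ makes the integrand non-negative, so it must vanish pointwise a.e.; the strict form of (G2)$^*$ then forces $\nabla\u_1=\nabla\u_2$ a.e., and Poincar\'{e}'s inequality (applicable because $\Gamma_D\neq\emptyset$) closes the argument with $\u_1=\u_2$. I expect the Minty identification of the constitutive equation to be the only genuinely nontrivial step; everything else is essentially a streamlined, time-independent version of the argument for Theorem~\ref{result}.
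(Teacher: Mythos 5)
Your blueprint is sound in outline: approximate the graph $\A$ by the Lipschitz, uniformly monotone, $2$-coercive graphs $\Aee$ of Section~\ref{graph-eps}, solve the Hilbert-space problem by Minty--Browder (Appendix~\ref{App3}), and identify the limiting constitutive relation via the energy identity together with Lemma~\ref{Agraf}. The uniqueness argument under (G2)$^*$ via testing with $\u_1-\u_2$ and Poincar\'{e}'s inequality is correct.

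However, there is a genuine gap in the existence part. You claim that testing with $\u^\e$ and the ``uniform $p$-coercivity'' of $\Aee$ give $\e$-independent bounds on $\u^\e$ in $W^{1,p}_{\Gamma_D}$ and on $\S^\e$ in $L^{p'}$. This is false whenever $p\neq 2$: Lemma~\ref{lemaodhmin} only yields the uniform coercivity
\[
\Se:\De \geq \tilde{C}_1\bigl(|\Se|^{\min\{p',2\}} + |\De|^{\min\{p,2\}}\bigr)-\tilde{C}_2,
\]
so the uniform bounds are in $W^{1,\min\{p,2\}}_{\Gamma_D}$ and $L^{\min\{p',2\}}$ respectively; the upgrade to $L^{p'}$ and $L^p$ for the \emph{limit} pair happens only through Lemma~\ref{Agraf}, not at the level of the approximations. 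This matters because for $p>2$ your regularized Hilbert-space problem is not even well posed with the given data: $W^{1,p}_{\Gamma_D}\hookrightarrow W^{1,2}_{\Gamma_D}$ implies $\bigl(W^{1,2}_{\Gamma_D}\bigr)^*\hookrightarrow \bigl(W^{1,p}_{\Gamma_D}\bigr)^*$ but not conversely, so a general $\f\in\bigl(W^{1,p}_{\Gamma_D}\bigr)^*$ need not act on $W^{1,2}_{\Gamma_D}$. Consequently both the right-hand side of the regularized equation and the duality $\langle\f,\u^\e\rangle$ in your energy identity are undefined, and the convergence $\langle\f,\u^\e\rangle\to\langle\f,\u\rangle$ cannot be extracted from a weak limit taken only in $W^{1,2}_{\Gamma_D}$.

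The resolution is exactly Step~8 of the proof of Theorem~\ref{result}, which is \emph{not} a time-dependent step and therefore cannot be dropped in the elliptic case: first prove the theorem for $\f\in\bigl(W^{1,\min\{p,2\}}_{\Gamma_D}\bigr)^*$, where the Hilbert-space approximation and the duality $\langle\f,\u^\e\rangle$ make sense and Lemma~\ref{Agraf} yields a solution $(\u,\S)\in W^{1,p}_{\Gamma_D}\times L^{p'}$; then, for general $\f\in\bigl(W^{1,p}_{\Gamma_D}\bigr)^*$ with $p>2$, approximate $\f$ by $\f^m\in\bigl(W^{1,2}_{\Gamma_D}\bigr)^*$ with $\f^m\to\f$ in $\bigl(W^{1,p}_{\Gamma_D}\bigr)^*$, take the corresponding solutions $(\u^m,\S^m)$, derive uniform-in-$m$ estimates using the $p$-coercivity (A4) of the \emph{original} graph $\A$, and pass $m\to\infty$ with Lemma~\ref{aproxvA}. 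Once this two-level approximation is inserted, your argument goes through; for $p\le2$ the intermediate step is vacuous and your proof is essentially complete.
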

The above result does not include the purely Neumann problem, i.e. $\Gamma_N=\partial \Omega$. Nevertheless, the existence statement of Theorem~\ref{result-2} remains true provided that the right-hand side fulfills the necessary compatibility condition
\begin{equation*}
 \langle \f,\vp \rangle_{W^{1,p}(\Omega;\R^N)}=0 \qquad  \textrm{ for all constant } \vp\in \R^N.
\end{equation*}
Moreover, the uniqueness result holds true if restricted to functions with prescribed mean value.

\section{Null points of \texorpdfstring{$\cG$}{G} and maximal monotone graphs}\label{maxmon}

In this part, we identify the null set of $\G$ with a subset $\A$ of $\R^{{N\times d}}\times \R^{{N\times d}}$ and show that the assumptions (G1)--(G4) delimiting the structure of $\cG$ imply that $\A$ is a maximal monotone $p$-coercive graph (see the definition below). { Moreover, we show in which sense  the assumptions (G1)--(G4) are necessary in order to guarantee the maximal monotonicity of $\A$.} Before doing so, we develop a generalized monotone operator theory following the original work~\cite{Minty} as well as \cite{AlbAmb} and \cite{BrCrPa} where a similar but less general approach is used.

Let us start by recalling the notion of maximal monotone graph.
\begin{definition}[Maximal monotone $p$-coercive graph]\label{maxmongrafA}
Let $p \in (1, \infty)$ and $p' := \frac{p}{p-1}$. We say that a subset $\A$ of  $\R^{{N\times d}} \times \R^{{N\times d}}$ is a maximal monotone $p$-coercive graph if
\begin{itemize}
\item[(A1)] $(\0,\0) \in \A$;
\item[(A2)] For any $(\S_1,\DD_1), (\S_2,\DD_2) \in \A$
\begin{equation*}
(\S_1 - \S_2):(\DD_1 - \DD_2) \ge 0;
\end{equation*}
\item[(A3)] If for some $(\S,\DD) \in \R^{{N\times d}} \times \R^{{N\times d}}$ and for all $(\oS,\oD) \in \A$
\begin{equation*}
(\S - \oS):(\DD - \oD) \ge 0,
\end{equation*}
then $(\S,\DD) \in \A$;
\item[(A4)] There exist $C_1, C_2 >0$ such that for all $(\S,\DD) \in \A$
\begin{equation*}
\S :\DD \ge C_1(|\S|^{p'} +|\DD|^{p})-C_2.
\end{equation*}
\end{itemize}
\end{definition}
The condition (A1) means that $\A$ passes through the origin, (A2) states that the graph $\A$ is \underline{monotone}, while (A3) states that $\A$ is \underline{maximal monotone}, i.e. $\A$ \emph{cannot be extended to a properly larger domain while preserving its monotoneity}\footnote{The text in italics is verbatim citation from \cite{Minty}.}. Finally, (A4) states that the graph $\A$ is \underline{$p$-coercive}.
\begin{remark} For further generality, one could replace (A4) with the following condition:
\begin{itemize}
\item[(A4$^*$)] There exist $c^*, c_*>0$ and a Young function $\psi$ such that for all $(\S,\DD) \in \A$
$$
\S :\DD \geq c^*(\psi(|\DD|) + \psi^*(|\S|))-c_*.
$$
\end{itemize}
Here, $\psi:\R \to \R^+$ is a Young function, i.e. $\psi$ is an even continuous convex function such that
\begin{equation*}
\lim_{s \to 0_+} \frac{\psi(s)}{s}=0 ~\text{ and }~ \lim_{s \to +\infty} \frac{\psi(s)}{s}=+\infty,
\end{equation*}
and the convex conjugate function $\psi^*$ is defined as the Legendre transform of $\psi$, i.e.,
\begin{equation*}
\psi^*(s) := \sup_{l\in\R}(s\cdot l - \psi(l)).
\end{equation*}
The study of the models related via \emph{maximal monotone $\psi$-graphs}, i.e. the graphs satisfying (A1)--(A3) and (A4$^*$), are of interest; however, such an extension is nowadays rather routine and it is not included in this work.
\end{remark}

%\subsection{Equivalent characterization of maximality}

%In what follows, we formulate several lemmata describing the properties of the graphs which are used in the existence proof. We start with the crucial convergence lemma, which allows us to pass from the results for the approximative problems with $2$-graph with Lipschitz continuous selection to the general problem with $p$-graph without measurable selection.

Next, we prove an auxiliary result adopted from Minty~\cite{Minty} and~\cite[Proposition 1.1 (applied to dimension ${N\times d}$)]{AlbAmb} and adjusted to our situation. More precisely, having a monotone graph $\A$ we can identify it with two possibly multivalued (monotone) mappings $\S^*$ and $\DD^*$, each defined on a subset of $\R^{{N\times d}}$ through
\begin{equation}\label{multivalueD}
(\oS, \oD) \in \A ~\Longleftrightarrow~ \oS \in \S^*(\oD)~\Longleftrightarrow~ \oD \in \DD^*(\oS).
\end{equation}
Then, $\S^*$ is maximal monotone if and only if $\S^* + \e \I$ is onto for any $\e \in (0,1]$ and $\DD^*$ is maximal monotone if and only if $\DD^* + \e \I$ is onto for any $\e \in (0,1]$. In the next lemma, we will prove the first statement noting that the proof of the second equivalence can be done in the same way just by interchanging the role of $\DD$ and $\S$.

\begin{lemma}\label{onto}
Let $\e \in (0,1]$ and let $\A$ be a monotone graph identified with $\S^*$ via~\eqref{multivalueD}, i.e. $\A$ satisfies (A1) and (A2).
Then, $\A$ is maximal monotone, i.e. $\A$ satisfies (A3), if and only if the mapping $\S^* + \e \I$ is onto.
\end{lemma}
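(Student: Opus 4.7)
The statement splits into two implications, with the nontrivial content concentrated in the forward direction.

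The direction ``surjectivity of $\S^{*} + \e\I$ implies (A3)'' is immediate. Given $(\S,\DD)$ satisfying the test condition in (A3), surjectivity yields $(\oS,\oD) \in \A$ with $\oS + \e\oD = \S + \e\DD$, i.e.\ $\S - \oS = -\e(\DD - \oD)$. Substituting into $(\S - \oS):(\DD-\oD) \ge 0$ gives $-\e|\DD-\oD|^{2} \ge 0$, whence $\DD=\oD$ and $\S=\oS$, so $(\S,\DD)\in\A$.

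For the forward direction I would work with the map $P_{\e}(\S,\DD):= \S + \e\DD$ defined on $\A$, and first record three elementary consequences of (A1)--(A3). \emph{Injectivity} of $P_{\e}|_{\A}$: the same substitution as above shows that two points of $\A$ with equal image coincide. \emph{A~priori bound}: testing the monotonicity against $(\0,\0)\in\A$ gives $\S : \DD \ge 0$ for every $(\S,\DD)\in\A$, whence
\[
|\S + \e\DD|^{2} \ge |\S|^{2} + \e^{2}|\DD|^{2},
\]
so $P_{\e}|_{\A}$ is proper. \emph{Closedness of $\A$ in $\R^{N\times d}\times\R^{N\times d}$}: if $(\S_{n},\DD_{n})\to(\S,\DD)$ with $(\S_{n},\DD_{n})\in\A$, then for any fixed $(\oS,\oD)\in\A$ passing to the limit in $(\S_{n}-\oS):(\DD_{n}-\oD)\ge 0$ produces the (A3)-test at $(\S,\DD)$, and maximality forces $(\S,\DD)\in\A$.

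With these in hand, fix $\F\in\R^{N\times d}$. I would construct the desired preimage of $\F$ by producing $\hat\DD$ satisfying the variational inequality
\[
(\F - \e\hat\DD - \oS):(\hat\DD - \oD) \ge 0 \quad\text{for all } (\oS,\oD)\in\A,
\]
because (A3) then immediately forces $(\F - \e\hat\DD, \hat\DD)\in\A$, which is a preimage of $\F$ under $P_{\e}$. To produce $\hat\DD$ I would apply a Brouwer-type fixed-point argument (or a Debrunner--Flor / Hartman--Stampacchia type selection) on the compact truncation $\A \cap B_{R}$, and then push $R\to\infty$ using the a~priori bound so that the truncation constraint becomes inactive at the limit, with the closedness of $\A$ ensuring that the limiting pair still lies in $\A$.

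The main obstacle is this last step, the construction of $\hat\DD$. Injectivity, closedness, and the a~priori bound are direct algebraic consequences of (A1)--(A2) together with (A3) used only as a limit-closure property; by contrast, the surjectivity of $P_{\e}|_{\A}$ genuinely requires the full strength of maximality combined with a topological fixed-point tool, which together rule out the pathological alternative $\mathrm{dist}(\F, P_{\e}(\A)) > 0$.
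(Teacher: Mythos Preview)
Your backward direction is correct and coincides with the paper's Step~2.

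For the forward direction your strategy is a legitimate alternative to the paper's proof, but the write-up contains imprecisions that you should repair. The paper argues via the Cayley transform: it shows that $(\S^{*}+\I)^{-1}$ is $1$-Lipschitz on its image, extends it to a $1$-Lipschitz map on all of $\R^{N\times d}$ (Kirszbraun), and from any point outside the image manufactures a pair that the maximality of $\A$ forces to lie in $\A$, a contradiction. Your route instead sets up the variational inequality
\[
(\F - \e\hat\DD - \oS):(\hat\DD - \oD)\ge 0 \qquad \text{for all } (\oS,\oD)\in\A
\]
and appeals to a Debrunner--Flor/Hartman--Stampacchia existence result. That is fine, and once $\hat\DD$ is found, maximality indeed gives $(\F-\e\hat\DD,\hat\DD)\in\A$.

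Three points to clean up. First, the truncation should be on the $\DD$-variable (you look for $\hat\DD_{R}\in\overline{B_{R}}$), not on ``$\A\cap B_{R}$'', which is neither convex nor the domain of any map you want to apply Brouwer to. Second, the a~priori bound you need is not the one you wrote (that one only shows $P_{\e}|_{\A}$ is proper, hence $P_{\e}(\A)$ is closed, which is not surjectivity); the relevant bound comes from testing the truncated variational inequality against $(\0,\0)\in\A$, which yields $\e|\hat\DD_{R}|^{2}\le \F:\hat\DD_{R}$ and thus $|\hat\DD_{R}|\le |\F|/\e$ uniformly in $R$. Third, with this bound in hand the constraint becomes inactive for any single $R>|\F|/\e$, so the limiting procedure and the closedness of $\A$ are not actually needed (if your version of Debrunner--Flor only tests against $(\oS,\oD)$ with $\oD\in\overline{B_{R}}$, then a limit $R\to\infty$ along a convergent subsequence of $\hat\DD_{R}$ recovers the full inequality).

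In short: both approaches hinge on a nontrivial extension/existence tool --- Kirszbraun for the paper, a Brouwer-based variational-inequality lemma for you. The paper's argument is more self-contained and ties in with the $1$-Lipschitz reparametrization used later; yours is shorter once Debrunner--Flor is taken as a black box, but you must state it precisely and identify the correct a~priori bound.
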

\begin{proof} We split the proof into two steps. In the first step, we show that if $\A$ is a maximal monotone graph then $\S^* + \I$ is onto (i.e. the domain of $(\S^* + \I)^{-1}$ is $\R^{{N\times d}}$). In the second step we prove the converse implication. In the proof, we restrict ourselves to the case $\e=1$ as the proof can be easily extended to the case $\e\in(0,1)$.

\paragraph{\bf Step 1.} We start by defining a set
\begin{equation}\label{Im}
\Im:=\{ \z \in \R^{{N\times d}}; \exists (\S,\DD)\in \A, \z = \S+\DD\}
\end{equation}
and our goal is to define $(\S^* + \I)^{-1}$ on $\Im$ and show that $\Im=\R^{{N\times d}}$.

\smallskip

\paragraph{\underline{\emph{$\Im$ is nonempty and closed}}} As $\0 \in \Im$, the set $\Im$ is nonempty. In addition, $\Im$ is closed, i.e. the following condition holds:
\begin{equation}\label{closed}
\z_j \in \Im \text{ for } j \in \N, ~\z_j \to \z \text{ in } \R^{{N\times d}} \text{ as } j\to\infty ~\implies~ \z \in \Im.
\end{equation}
Indeed, since $\z_j \in \Im$ there exist $(\S_j, \DD_j)\in \A$ such that $\z_j = \S_j + \DD_j$ for every $j$ and since
$\{\z_j\}$ is bounded, it follows from (A2) and (A1) that
\begin{equation*}
|\S_j|^2 + |\DD_j|^2 \leq |\S_j+\DD_j|^2= |\z_j|^2 \leq C.
\end{equation*}
Thus, the sequences $\{\S_j\}$ and $\{\DD_j\}$ are bounded and there exists a couple  $(\S,\DD)$ such that for a subsequence (that we do not relabel) $\S_j \to \S$ and $\DD_j \to \DD$ as $j \to \infty$. As $\z_j\to \z$, we conclude that $\z= \S + \DD$. Due to (A2) and the fact that $(\S_j,\DD_j)\in \A$ we also get
\begin{equation*}
(\S - \vec{A}):(\DD - \vec{B}) = \lim_{j\to \infty} (\S_j - \vec{A}):(\DD_j - \vec{B}) \geq 0\qquad \textrm{ for all } (\vec{A}, \vec{B})\in \A.
\end{equation*}
Then, by virtue of the maximality (A3), $(\S, \DD) \in \A$. Hence, $\z \in \Im$.

\smallskip

\paragraph{\underline{\emph{Definition of the mapping $(\S^* + \I)^{-1}$}}} On $\Im$, we define
\begin{equation}\label{DIinv}
(\S^* + \I)^{-1} (\z) := \{ \DD \in \R^{{N\times d}};\;  \exists \S, \; (\S, \DD) \in \A,\;  \S+\DD = \z\}
\end{equation}
and show in the following lines that $(\S^* + \I)^{-1}$ is a well-defined single-valued mapping and $\Im = \R^{{N\times d}}$.

We first check that $(\S^* + \I)^{-1}$ is $1$-Lipschitz on $\Im$. To this end, let us take $\z_1, \z_2 \in \Im$, $\z_1 \neq \z_2$, and $\DD_1 \in (\S^* + \I)^{-1} (\z_1)$, $\DD_2 \in (\S^* + \I)^{-1} (\z_2)$. Then, with the help of (A2), we have
\begin{equation*}
(\z_1 - \z_2):(\DD_1 - \DD_2)= (\S_1 - \S_2 +\DD_1 - \DD_2):(\DD_1 - \DD_2) \geq |\DD_1 - \DD_2|^2.
\end{equation*}
Then, $1$-Lipschitz continuity directly follows from the Cauchy-Schwarz inequality.

\smallskip

\paragraph{\underline{$\Im = \R^{{N\times d}}$}} Next, for contradiction, assume that $\Im \subsetneq \R^{{N\times d}}$. We then define an auxiliary function
\begin{equation}\label{Fz}
\F(\z):= \z - \sqrt{2}(\S^* + \I)^{-1}(\sqrt{2}\z), \qquad \z\in \frac{1}{\sqrt{2}}\Im
\end{equation}
and observe that $\F$ is also $1$-Lipschitz on $\frac{1}{\sqrt{2}}\Im$. Indeed, let $\z_1, \z_2 \in \frac{1}{\sqrt{2}}\Im$,
$\z_1 \neq \z_2$ and consider $\S_i, \DD_i$, $i=1,2$, such that $\S_1 + \DD_1 = \sqrt{2}\z_1$, $\S_2 + \DD_2 = \sqrt{2}\z_2$. Then with the help of \eqref{DIinv} and (A2) we observe that
\begin{align*}
|\F(\z_1) - \F(\z_2)|^2 &= |\z_1 - \z_2|^2 + 2|\DD_1 - \DD_2|^2 - 2 \sqrt{2} (\z_1 - \z_2):(\DD_1 - \DD_2) \\
&= |\z_1 - \z_2|^2 - 2(\S_1 - \S_2):(\DD_1 - \DD_2) \leq |\z_1 - \z_2|^2.
\end{align*}
As we assume that $\frac{1}{\sqrt{2}}\Im$ is a proper subset of $\R^{{N\times d}}$, i.e. $\frac{1}{\sqrt{2}}\Im \subsetneq \R^{{N\times d}}$, we can extend $\F$ defined on~$\frac{1}{\sqrt{2}}\Im$ to $\tF$ defined on~$\R^{{N\times d}}$ in such a way that
\begin{equation*}
\tF (\z)=
\begin{cases}
\begin{aligned}
&\F(\z) &&\z\in \frac{1}{\sqrt{2}}\Im,\\
&\text{is } 1\text{-Lipschitz} &&\text{on }\R^{{N\times d}}.
\end{aligned}
\end{cases}
\end{equation*}
Let us now define, for an arbitrary $\sqrt{2}\tz \in \R^{{N\times d}} \setminus \Im$,
\begin{equation}\label{tildeJD}
\tilde{\S}:= \frac{1}{\sqrt{2}}(\tz + \tF(\tz))\quad \textrm{ and } \quad  ~\tilde{\DD}:= \frac{1}{\sqrt{2}}(\tz - \tF(\tz)).
\end{equation}
If we prove that
\begin{equation}\label{golik}
(\tilde{\S}, \tilde{\DD})\in \A,
\end{equation}
then $\tilde{\S} + \tilde{\DD}=\sqrt{2}\tz$ and, due to the definition of $\Im$, $\sqrt{2}\tz \in \Im$, which is the sought contradiction. Thus, the definition domain of $(\S^* + \I)^{-1}$ is $\R^{{N\times d}}$. This means that $\S^* + \I$ is onto.

It remains to verify \eqref{golik}. For this purpose, we use the maximality of $\A$, i.e. the assumption (A3), and show that for all $(\S, \DD) \in \A$ the following holds:
\begin{equation}\label{golik2}
(\tilde{\S}-\S):(\tilde{\DD}-\DD)\ge 0.
\end{equation}
Taking an arbitrary $(\S, \DD) \in \A$ and setting $\sqrt{2}\z:=\S+\DD$, we observe that $\sqrt{2}\z \in \Im$. Then, by virtue of the definition $\F$ (see \eqref{Fz}), we have
\begin{equation}\label{beztilde}
\S= \frac{1}{\sqrt{2}}(\z + \F(\z))\quad \textrm{ and } \quad \DD= \frac{1}{\sqrt{2}}(\z - \F(\z)).
\end{equation}
Using then \eqref{tildeJD}, \eqref{beztilde} and the fact that $\tF$ is $1$-Lipschitz continuous on $\R^{{N\times d}}$, we obtain
\begin{align*}
2(\tilde{\S} - \S):(\tilde{\DD} - \DD) &= (\tz -\z+ \tF(\tz)-\F(\z)):(\tz -\z-(\tF(\tz)-\F(\z))) \\
&= |\tz -\z|^2 - |\tF(\tz)-\F(\z)|^2 \geq 0,
\end{align*}
which proves \eqref{golik2}. Thus, the proof of \eqref{golik} is complete.

\smallskip

\paragraph{\bf Step 2.} It remains to prove the second implication, i.e. if $\S^*+\I$ is onto, then the monotone graph $\A$ is maximal, i.e. (A3) holds. Let $(\tilde{\S},\tilde{\DD})\in \mathbb{R}^{{N\times d}} \times \R^{{N\times d}}$ be such that for all $(\S,\DD)\in \A$ \eqref{golik2} holds. Our goal is to show that $(\tilde{\S},\tilde{\DD})\in \A$. Since $\S^*+\I$ is onto, we know that for $(\tilde{\S},\tilde{\DD})$ there exists a couple $(\S,\DD)\in \A$ such that
\begin{equation}\label{cislo}
(\S^*+\I)(\DD) = \S+\DD = \tilde{\S}+\tilde{\DD}.
\end{equation}
Now, since $(\S,\DD)\in \A$, we use the decomposition~\eqref{cislo} in \eqref{golik2} and deduce that
$$
\begin{aligned}
0&\le  2(\tilde{\S}-\S): (\tilde{\DD} -\DD)=(\tilde{\S}-(\tilde{\S}+\tilde{\DD}-\DD)): (\tilde{\DD} -\DD)+(\tilde{\S}-\S): (\tilde{\DD} -(\tilde{\S}+\tilde{\DD}-\S))\\
&=-|\tilde{\DD} -\DD|^2 -|\tilde{\S}-\S|^2.
\end{aligned}
$$
Consequently, $\tilde{\DD}=\DD$ and $\tilde{\S}=\S$ and $(\tilde{\S},\tilde{\DD})\in \A$, which finishes the proof.
\end{proof}

%\begin{remark}
%The claim of the Lemma~\ref{onto} holds true also if $\S^*$ is substituted by $\DD^*$ defined in~\eqref{multivalueD}. However, we do not repeat the proof for showing that the maximality of monotone graph $\A$ is equivalent to the condition that $\DD^* + \e \I$ is onto, since this is done in the same way just by interchanging the role of $\DD$ and $\S$. Even though this one is more relatable to the claim in~\cite[Proposition 1.1]{AlbAmb}, in the following work, we find use for the one in~Lemma~\ref{onto}.
%\end{remark}

\begin{lemma}\label{GvsA}
Let $\G$ satisfy assumptions (G1)--(G4). Let
\begin{equation}\label{GimpliA}
\mathcal{A}:=\{(\S,\DD)\in \R^{{N\times d}} \times \R{^{N\times d};} \; \G(\S,\DD)=\0\}.
\end{equation}
Then, $\mathcal{A}$ is a maximal monotone $p$-coercive graph.
\end{lemma}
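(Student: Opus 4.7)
Conditions (A1) and (A4) are essentially built into (G1) and (G4): $\G(\0,\0)=\0$ yields $(\0,\0)\in\A$, and (G4) and (A4) coincide as written. The substance of the lemma is therefore the monotonicity (A2) and the maximality (A3), both of which I plan to derive by the orthogonal change of variables $\W := \S+\DD$, $\vec{R} := \S-\DD$ together with the identity
$$4(\S_1-\S_2):(\DD_1-\DD_2) = |\W_1-\W_2|^2 - |\vec{R}_1-\vec{R}_2|^2.$$

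For (A2), set $\tilde{\G}(\W,\vec{R}) := \G((\W+\vec{R})/2,(\W-\vec{R})/2)$; the third inequality of (G2) gives $\tilde{\G}_{\vec{R}} = (\G_\S-\G_\DD)/2>0$, so the implicit function theorem parametrizes $\A$ locally as $\vec{R} = \vec{R}(\W)$ with Jacobian $\vec{R}'(\W) = -(\G_\S-\G_\DD)^{-1}(\G_\S+\G_\DD)$. The displayed identity reduces (A2) to the bound $\|\vec{R}'(\W)\|_{\mathrm{op}}\le 1$, which I would integrate along the segment $\W(t) := (1-t)\W_1+t\W_2$. Here is the algebraic crux: setting $A := \G_\S\ge 0$ and $B := -\G_\DD\ge 0$, the identity
$$(A+B)(A+B)^T - (A-B)(A-B)^T = 2(AB^T+BA^T)$$
shows that the desired operator-norm bound is equivalent to $AB^T+BA^T\ge 0$, which is precisely the symmetrization of the fourth inequality of (G2) (that says $BA^T\ge 0$ as a quadratic form). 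All four inequalities in (G2) thus conspire in exactly this estimate.

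For (A3), I would invoke Lemma~\ref{onto}: it suffices to verify that the projection $\mathcal{D} := \{\S+\DD : (\S,\DD)\in\A\}$ equals $\R^{N\times d}$. It contains $\0$; openness follows from the implicit function theorem applied to $\tilde{\G}$ at every point of $\A$; and closedness follows from the continuity of $\G$ together with the a priori bound $|\S|^2+|\DD|^2\le|\S+\DD|^2$ (an immediate consequence of $\S:\DD\ge 0$, which is (A2) applied with $(\S_2,\DD_2)=(\0,\0)$). Connectedness of $\R^{N\times d}$ then forces $\mathcal{D} = \R^{N\times d}$. The hard part will be step (A2), specifically the algebraic passage just described; (G3) enters as the coercive safeguard needed when $\G$ is only Lipschitz (so that (G2) holds only almost everywhere), a technicality that must be handled either by mollification of $\G$ or by using (G2)$^*$ in place of (G2) as indicated in the paper.
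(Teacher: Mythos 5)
Your overall strategy for (A2) is genuinely different from the paper's, and its algebraic heart is correct and arguably cleaner: the orthogonal change of variables $(\W,\vec{R})=(\S+\DD,\,\S-\DD)$ turns (A2) into the operator-norm bound $\|\vec{R}'(\W)\|_{\mathrm{op}}\le 1$, and the identity $(A+B)(A+B)^T-(A-B)(A-B)^T=2(AB^T+BA^T)$ shows this is exactly the quadratic-form symmetrization of the last inequality in (G2). By contrast, the paper works with the oblique parametrization $\Z=\S+\varepsilon\DD$, proves $\DD_{\Z}\ge 0$ via an ad hoc substitution $\Y=(\varepsilon\G_{\S}-\G_{\DD})^{-T}\X$, and then must pass $\varepsilon\to 0_+$ to recover monotonicity, because $\DD_{\Z}\ge 0$ only yields $(\S_1-\S_2):(\DD_1-\DD_2)\ge-\varepsilon|\DD_1-\DD_2|^2$; your $45$-degree rotation removes that limiting step. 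That is a real simplification and worth keeping.

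There are, however, two genuine gaps. First, a circularity: your integration of $\vec{R}'$ along the segment $\W(t)=(1-t)\W_1+t\W_2$ needs the segment to lie in the domain $\mathcal{D}$ of the parametrization — that is, it already uses surjectivity (A3) — while your proof of closedness of $\mathcal{D}$ in turn cites the a priori bound $|\S|^2+|\DD|^2\le|\S+\DD|^2$, which you derive from (A2). You should instead derive the a priori bound from (G4): since $\S:\DD=\tfrac14(|\W|^2-|\vec{R}|^2)\le\tfrac14|\W|^2$, (G4) gives $|\S|^{p'}+|\DD|^p\le C(1+|\W|^2)$ directly, so closedness needs no input from monotonicity. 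Reordering (A3) before (A2) then dissolves the circularity. Second, and more seriously, the openness of $\mathcal{D}$ via the implicit function theorem needs both that $\G$ be $\mathcal{C}^1$ (whereas (G1) only grants Lipschitz) and that the strict inequality $\G_{\S}-\G_{\DD}>0$ hold at the specific null point in question, whereas (G2) holds only almost everywhere; in a Lipschitz, a.e.~setting these fail, and neither mollifying $\G$ (which alters the null set) nor replacing (G2) by (G2)$^*$ repairs the IFT argument. This is exactly where (G3) is load-bearing rather than a technicality: the paper replaces the IFT/openness step entirely by a Brouwer fixed-point argument (Step~1 of its proof) that uses only continuity of $\G$, with (G3) supplying the sign condition on a large sphere that makes the degree argument close. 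In the framework of the paper's hypotheses, (G3) is the engine of surjectivity, and a proof that relegates it to a regularity footnote is missing a step.
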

Here, we would like to emphasize that the assumptions (G1)--(G4) are associated with the implicit constitutive equation $\cG(\S,\DD)=\0$  and it is not evident a~priori that the null set of $\cG$ is a maximal monotone $p$-coercive graph. However, we will show below that the monotonicity condition (A2) is in fact a consequence of (G2) while the maximality (A3) follows from the continuity of~$\G$ and~(G3).

\begin{proof}[Proof of Lemma~\ref{GvsA}] We start the proof with several simple observations. Recalling the definition of $\A$ given in~\eqref{GimpliA}, we see directly that (G1)$\implies$(A1) and (G4)$\iff$(A4). Thus, it remains to verify the conditions~(A2) and~(A3). We show that they follow from (G2), (G3) and the continuity of $\G$. Without loss of generality, we assume here that the second condition in (G3) is fulfilled. In case that the first condition of (G3) was true, we would have to change the role of $\S$ and $\DD$ in the proof below. We split the proof into several steps.

\paragraph{{\bf Step 1.}} We first show that for every $\varepsilon \in (0,1]$ and every $\Z\in \R^{{N\times d}}$ there exists a $\DD\in \R^{{N\times d}}$ such that
\begin{equation}\label{nece}
\G(\Z-\varepsilon\DD,\DD)=\0.
\end{equation}
Once \eqref{nece} is proved and once we show that the graph $\A$ is monotone (which we shall prove in Step 4 below), then \eqref{nece} implies that $\A$ is maximal by means of Lemma~\ref{onto}. Indeed, we need to check that $\S^* +\e\I$ is onto, i.e. that for every $\Z\in \R^{{N\times d}}$ there exists a couple $(\S,\DD)\in \R^{{N\times d}}\times \R^{{N\times d}}$ such that
\begin{equation}\label{pepa998}
\G(\S,\DD)=\0 \qquad \textrm{and} \qquad \S+\e\DD=\Z.
\end{equation}
However, substituting the second relation into the first one, we observe that it is exactly \eqref{nece}.

{To summarize, once we verify \eqref{nece} and show that $\A$ is monotone, the proof of Lemma~\ref{GvsA} is complete.}

\smallskip

\paragraph{\underline{\emph{Proof of \eqref{nece}}}} For arbitrary $\Z_1$, $\Z_2\in \mathbb{R}^{{N\times d}}$, set $\Z_t:=t\Z_1+(1-t)\Z_2$. Then, by virtue of~(G2), $\G_{\S}(\Z_t,\DD)\ge 0$ for all $\DD\in \mathbb{R}^{{N\times d}}$. Consequently
$$
\begin{aligned}
(\G(\Z_1,\DD)-\G(\Z_2,\DD)):(\Z_1 - \Z_2)&= \int_0^1 \dt \G(\Z_t,\DD):(\Z_1-\Z_2)\d t\\
&= \int_0^1 \G_{\S}(\Z_t,\DD)(\Z_1-\Z_2) :(\Z_1-\Z_2)\d t\ge 0.
\end{aligned}
$$
Taking, in particular, $\Z_1=\Z-\varepsilon \DD$ and $\Z_2= \Z$, where $\Z$ and $\DD\in \mathbb{R}^{{N\times d}}$ are arbitrary, it follows that
\begin{equation}\label{oddhad}
\begin{aligned}
-\varepsilon (\G(\Z-\varepsilon \DD,\DD)-\G(\Z,\DD)):\DD \geq 0.
\end{aligned}
\end{equation}
Using then (G3), we observe that for arbitrary $\varepsilon\in (0,1]$ and any $\Z\in \mathbb{R}^{{N\times d}}$
\begin{equation}\label{iiil}
\limsup_{|\DD|\to \infty}\G(\Z-\varepsilon \DD, \DD) : \DD \overset{\eqref{oddhad}}{\le }\limsup_{|\DD|\to \infty}\G(\Z, \DD) : \DD <0.
\end{equation}
Thus,  there exists an $R>0$ such that for all $\DD\in \R^{{N\times d}}$ fulfilling $|\DD|\ge R$, we have
\begin{equation}\label{as1a}
\G(\Z-\varepsilon\DD,\DD):\DD\le 0.
\end{equation}
Having this piece of information, we prove \eqref{nece} by contradiction. We thus assume that for all $\DD\in \R^{{N\times d}}$ such that $|\DD|\le R$ one has that
\begin{equation}\label{contra1}
\G(\Z-\varepsilon\DD,\DD)\neq \0.
\end{equation}
Then, due to \eqref{contra1} and the continuity of $\cG$, the mapping
$$
\DD\mapsto R \frac{\G(\Z-\varepsilon \DD,\DD)}{|\G(\Z-\varepsilon\DD,\DD)|}
$$
is defined in a closed ball of radius $R$, is continuous and maps a closed ball of radius $R$ into itself (in fact it maps the ball of radius $R$ onto its sphere). Consequently, by Browder's fixed point theorem, there is a $\DD\in \R^{{N\times d}}$, $|\DD|= R$, such that
$$
\DD= R \frac{\G(\Z-\varepsilon\DD,\DD)}{|\G(\Z-\varepsilon\DD,\DD)|}.
$$
Taking the scalar product of both sides of this equality with $\DD$ and using \eqref{as1a}, we see that
$$
R^2= \DD : \DD = R \frac{\G(\Z-\varepsilon\DD,\DD):\DD}{|\G(\Z-\varepsilon\DD,\DD)|}\le 0,
$$
a contradiction. Hence, for an arbitrarily given $\Z\in \R^{{N\times d}}$ there is $\DD$ satisfying \eqref{nece}.

\smallskip

\paragraph{{\bf Step 2.}} Next we show that for any couple $(\Z_1, \DD_1),(\Z_2, \DD_2)\in \mathbb{R}^{{N\times d}}\times  \mathbb{R}^{{N\times d}}$ fulfilling, for $i=1,2$, $\G(\Z_i-\varepsilon \DD_i, \DD_i)=\0$, the following condition holds:
\begin{equation}\label{loc-lip}
|\DD_1-\DD_2|\le C(\Z_1,\Z_2,\DD_1,\DD_2)|\Z_1-\Z_2|.
\end{equation}
This means that $\DD$ can be understood as a locally Lipschitz function of $\Z$.

\smallskip
\paragraph{\underline{\emph{Proof of \eqref{loc-lip}}}} Let us denote $\Z_t:=t\Z_1+(1-t)\Z_2$ and $\DD_t:=t\DD_1+(1-t)\DD_2$. Then it follows from the assumption (G2) that
$$
\begin{aligned}
\0&=\G(\Z_1-\varepsilon \DD_1, \DD_1) - \G(\Z_2-\varepsilon\DD_2, \DD_2)\\
&=\int_0^1 \dt \G(\Z_t-\varepsilon \DD_t, \DD_t)\d t\\
&= \int_0^1 \G_{\S}(\Z_t-\varepsilon \DD_t, \DD_t)(\Z_1-\Z_2-\varepsilon (\DD_1-\DD_2))+ \G_{\DD}(\Z_t-\varepsilon \DD_t, \DD_t) (\DD_1-\DD_2) \d t.
\end{aligned}
$$
Consequently,
\begin{equation}\label{pepa1}
\int_0^1 \varepsilon \G_{\S}(\dots)-\G_{\DD}(\dots)\d t \, (\DD_1-\DD_2)=\int_0^1 \G_{\S}(\dots)\d t \, (\Z_1-\Z_2),
\end{equation}
where $(\dots)$ stands for $(\Z_t-\varepsilon \DD_t, \DD_t)$.
Thanks to (G2), we know that $\G_{\S}\ge 0$, $-\G_{\DD}\ge 0$ and $\G_{\S}-\G_{\DD}>0$. Consequently, for arbitrary $\varepsilon >0$ we also have $\varepsilon \G_{\S}-\G_{\DD}>0$, and also
$$
I:=\int_0^1 \varepsilon \G_{\S}(\dots)-\G_{\DD}(\dots)\d t >0.
$$
This means that $I$ is positive definite, and consequently, $I$ is an invertible matrix. It thus follows from \eqref{pepa1} that
\begin{equation*}
\DD_1-\DD_2 =\left(\int_0^1 \varepsilon \G_{\S}(\dots)-\G_{\DD}(\dots)\d t \right)^{-1}\int_0^1 \G_{\S}(\dots)\d t \, (\Z_1-\Z_2)
\end{equation*}
and \eqref{loc-lip} follows.

\smallskip
\paragraph{{\bf Step 3.}}
In this step, we show that for arbitrary $(\Z_1,\DD_1),(\Z_2,\DD_2)\in \mathbb{R}^{{N\times d}} \times \mathbb{R}^{{N\times d}}$ fulfilling, for $i=1,2$, $\G(\Z_i -\varepsilon \DD_i, \DD_i)= \0$, there holds
\begin{equation}
\label{monbase}
(\DD_1-\DD_2):(\Z_1-\Z_2) \ge 0.
\end{equation}

\smallskip

\paragraph{\underline{\emph{Proof of \eqref{monbase}}}} It follows from Step 2, that for the null points of $\G(\Z-\varepsilon \DD, \DD)$, we can understand $\DD$ as a locally Lipschitz mapping of $\Z$ and we can write $\DD(\Z)$. Since $\DD$ is Lipschitz, its derivative $\DD_{\Z}(\Z)$ exists for almost all $\Z$. By applying this derivation to $\cG(\Z-\varepsilon \DD(\Z), \DD(\Z))=\0$, we obtain
$$
\begin{aligned}
\0&={\frac{\mathrm{d}}{\mathrm{d}\Z}} \G(\Z-\varepsilon \DD(\Z), \DD(\Z))\\
&=\G_{\S}(\Z-\varepsilon \DD(\Z), \DD(\Z))(\I- \varepsilon \DD_{\Z}(\Z)) + \G_{\DD}{(}\Z-\varepsilon \DD(\Z), \DD(\Z))\DD_{\Z}(\Z)
\end{aligned}
$$
It follows that
$$
(\varepsilon \G_{\S}(\Z-\varepsilon \DD(\Z), \DD(\Z))-\G_{\DD}(\Z-\varepsilon \DD(\Z), \DD(\Z)))\DD_{\Z}(\Z)= \G_{\S}(\Z-\varepsilon \DD(\Z), \DD(\Z)).
$$
Since the matrix on the left-hand side is regular thanks to the assumption (G2), we observe (we omit writing the dependence on $\Z$ for simplicity) that
\begin{equation}
\DD_{\Z}= (\varepsilon \G_{\S}-\G_{\DD})^{-1} \G_{\S}.\label{Dzfor}
\end{equation}
Our next goal is to show  that
\begin{equation}
\DD_{\Z}\ge 0.\label{signD}
\end{equation}
To do so, consider an arbitrary nonzero $\X \in \mathbb{R}^{{N\times d}}$. Since $(\varepsilon \G_{\S}-\G_{\DD})$ is invertible, we can also define $\Y:= (\varepsilon \G_{\S}-\G_{\DD})^{-T} \X$ and with the help of \eqref{Dzfor} obtain
$$
\begin{aligned}
{(}\DD_{\Z}\X {)} :\X &= {(}(\varepsilon \G_{\S}-\G_{\DD})^{-1} \G_{\S}\X {)}:\X = (\G_{\S}\X) : ((\varepsilon \G_{\S}-\G_{\DD})^{-T}\X)\\
&= {(}\G_{\S} (\varepsilon \G_{\S}-\G_{\DD})^{T}\Y{)} : \Y=((\varepsilon \G_{\S}-\G_{\DD})^{T}\Y ): ((\G_{\S})^T \Y)\\
&=\varepsilon |(\G_{\S})^T \Y|^2 -{(}\G_{\DD}(\G_{\S})^{T}\Y {)}: \Y\ge 0,
\end{aligned}
$$
where the last inequality follows from the third assumption in (G2). Finally, since
$$
\DD(\Z_1)-\DD(\Z_2)= \int_0^1\dt \DD(\Z_t)\d t = \int_0^1\DD_{\Z}(\Z_t)\d t \, (\Z_1-\Z_2)
$$
with $\Z_t:=t\Z_1+(1-t)\Z_2$, we can use \eqref{signD} to deduce that
$$
(\DD(\Z_1)-\DD(\Z_2)):(\Z_1-\Z_2)= \int_0^1\DD_{\Z}(\Z_t)\d t (\Z_1-\Z_2) :(\Z_1-\Z_2)\ge 0
$$
and \eqref{monbase} follows.

\smallskip

\paragraph{{\bf Step 4.}} Finally, it remains to verify that $\A$ is monotone, i.e. (A2) holds. Let $\S_1$, $\S_2$, $\DD_1$ and $\DD_2\in \mathbb{R}^{{N\times d}}$ fulfilling, for $i=1,2$, $\G(\S_i,\DD_i)=\0$ be arbitrary. The aim is to show that
\begin{equation}\label{aux_1}
(\S_1-\S_2):(\DD_1-\DD_2)\ge 0.
\end{equation}
To prove this, we define $\Z_i:=\S_i + \varepsilon \DD_i$, $i=1,2$. Then it follows from the assumptions on $(\S_i,\DD_i)$ that $\G(\Z_i-\varepsilon \DD_i, \DD_i)=\0.$ Hence the inequality \eqref{monbase} from Step 3 is valid for $(\Z_1,\DD_1)$ and $(\Z_2, \DD_2)$. Thus, we can continue as follows
$$
\begin{aligned}
(\S_1-\S_2):(\DD_1-\DD_2)&= (\Z_1-\Z_2 - \varepsilon (\DD_1-\DD_2)):(\DD_1-\DD_2)\\
&=(\Z_1-\Z_2):(\DD_1-\DD_2) - \varepsilon |\DD_1-\DD_2|^2 \ge - \varepsilon |\DD_1-\DD_2|^2,
\end{aligned}
$$
where the last inequality follows from \eqref{monbase}. Since the left-hand side is independent of $\varepsilon$, letting $\varepsilon \to 0_+$, we deduce \eqref{aux_1}. The proof of Lemma~\ref{GvsA} is complete.
\end{proof}

{ Lemma~\ref{GvsA} states that the conditions (G1)--(G4) are sufficient for proving that $\mathcal{A}$, as defined in \eqref{GimpliA}, fulfills (A1)--(A4). It is natural to ask whether the conditions (G1)--(G4) are also necessary, i.e. whether a Lipschitz continuous function $\G$ that generates by \eqref{GimpliA} a set $\mathcal{A}$ satisfying (A1)--(A4) has to fulfill (G1)--(G4). It is evident that (A1) and (A4) are equivalent to (G1) and (G4). It is also obvious that the properties of $\mathcal{A}$ follow from the behavior of $\G$ only in the neighborhood of the null points. Consequently one can redefine $\G$ outside this neighborhood arbitrarily. In particular, as $\mathcal{A}$ satisfies (A2) and (A3), if necessary, one can always redefine $\G$ so that (G3) holds. Finally, regarding (G2), the situation seems to be the most interesting one. Considering $\G(\S,\DD):=|\S - \DD|^2 (\S - \DD)$, which generates a maximal monotone (in fact linear) graph, one observes that the presence of a strict inequality sign (for the null points) in the first three inequalities of (G2) is violated. On the other hand, the last inequality in (G2), which seems to be the most essential, reveals to be a necessary consequence of (A2); this is formulated in the next lemma.
\begin{lemma}\label{GvsAAA}
Let $\G$ be a $\mathcal{C}^{0,1}$ mapping and let $\mathcal{A}$, defined in \eqref{GimpliA}, fulfill (A1)--(A4).
%\begin{equation}\label{GimpliAAA}
%\mathcal{A}:=\{(\S,\DD)\in \R^{{N\times d}} \times \R{^{N\times d};} \; \G(\S,\DD)=\0\}
%\end{equation}
%satisfy (A1)--(A4).
Then,
\begin{equation}\label{spat}
\G_{\DD}(\S,\DD)(\G_{\S}(\S,\DD))^T \le 0 \qquad \textrm{ for all } (\S,\DD)\in \mathcal{A}.
\end{equation}
\end{lemma}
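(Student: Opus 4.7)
The plan is to exploit the parametrization of $\A$ by the sum variable $\Z:=\S+\DD$ guaranteed by Lemma~\ref{onto}, together with the 1-Lipschitz control this furnishes, in order to extract a first-order algebraic identity at every point of $\A$ at which $\G$ is differentiable (which is how the statement is to be read, since the derivatives $\G_{\S},\G_{\DD}$ exist only almost everywhere by Rademacher's theorem).

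Fix such a point $(\S_0,\DD_0)\in\A$. By Lemma~\ref{onto}, conditions (A1)--(A3) imply that for every $\Z\in\R^{N\times d}$ there exists a unique $(\S,\DD)\in\A$ with $\S+\DD=\Z$; the resulting map $\Z\mapsto\DD$ is 1-Lipschitz (as in the proof of Lemma~\ref{onto}), and the short calculation
\begin{equation*}
|\S(\Z_1)-\S(\Z_2)|^2=|\Z_1-\Z_2|^2-2(\Z_1-\Z_2):(\DD(\Z_1)-\DD(\Z_2))+|\DD(\Z_1)-\DD(\Z_2)|^2\le|\Z_1-\Z_2|^2,
\end{equation*}
using monotonicity in the form $(\Z_1-\Z_2):(\DD_1-\DD_2)\ge|\DD_1-\DD_2|^2$, shows that $\Z\mapsto\S$ is 1-Lipschitz as well. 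For any $\vec{w}\in\R^{N\times d}$ and small $t>0$, taking $\Z=\S_0+\DD_0+t\vec{w}$ produces $(\S_t,\DD_t)\in\A$ with $\S_t+\DD_t-\S_0-\DD_0=t\vec{w}$ and $|\S_t-\S_0|,|\DD_t-\DD_0|\le t|\vec{w}|$. Writing $\vec{c}_t:=(\S_t-\S_0)-(\DD_t-\DD_0)$, so that $\S_t-\S_0=(t\vec{w}+\vec{c}_t)/2$ and $\DD_t-\DD_0=(t\vec{w}-\vec{c}_t)/2$, monotonicity yields the identity $4(\S_t-\S_0):(\DD_t-\DD_0)=t^2|\vec{w}|^2-|\vec{c}_t|^2\ge 0$ and hence $|\vec{c}_t|\le t|\vec{w}|$; the Taylor expansion of $\G$ at $(\S_0,\DD_0)$ then gives
\begin{equation*}
0=\G(\S_t,\DD_t)=\tfrac{1}{2}\bigl[(\G_{\S}+\G_{\DD})(t\vec{w})+(\G_{\S}-\G_{\DD})\vec{c}_t\bigr]+o(t),
\end{equation*}
and dividing by $t/2$ and extracting a subsequence $t_n\to 0_+$ along which $\vec{c}_{t_n}/t_n\to\vec{c}^{\ast}$ delivers an element $\vec{c}^{\ast}\in\R^{N\times d}$ with $P\vec{c}^{\ast}=-T\vec{w}$ and $|\vec{c}^{\ast}|\le|\vec{w}|$, where $T:=\G_{\S}+\G_{\DD}$ and $P:=\G_{\S}-\G_{\DD}$.

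The conclusion is extracted by a tailored choice of $\vec{w}$. For an arbitrary $\X\in\R^{N\times d}$, taking $\vec{w}:=T^{T}\X$ and pairing the resulting identity $P\vec{c}^{\ast}=-TT^{T}\X$ with $\X$, the Cauchy--Schwarz inequality yields
\begin{equation*}
|T^{T}\X|^{2}=(TT^{T}\X):\X=-\vec{c}^{\ast}:(P^{T}\X)\le|\vec{c}^{\ast}|\,|P^{T}\X|\le|T^{T}\X|\,|P^{T}\X|,
\end{equation*}
so $|T^{T}\X|\le|P^{T}\X|$ for every $\X$, that is, $TT^{T}\le PP^{T}$ as symmetric bilinear forms on $\R^{N\times d}$. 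A direct expansion gives $PP^{T}-TT^{T}=-2(\G_{\DD}(\G_{\S})^{T}+\G_{\S}(\G_{\DD})^{T})$, hence $\G_{\DD}(\G_{\S})^{T}+\G_{\S}(\G_{\DD})^{T}\le 0$, and this is twice the symmetric part of $\G_{\DD}(\G_{\S})^{T}$, giving the required $\G_{\DD}(\S,\DD)(\G_{\S}(\S,\DD))^{T}\le 0$ as a quadratic form.

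The principal difficulty is that a naive pointwise differentiation of monotonicity along the graph only produces the weaker quadratic-form inequality $(\partial_{\Z}\DD)^{T}(\partial_{\Z}\S)+(\partial_{\Z}\S)^{T}(\partial_{\Z}\DD)\ge 0$ for the Jacobians along the $\Z$-parametrization, which is in general not equivalent to the distinct condition $(\partial_{\Z}\DD)(\partial_{\Z}\S)^{T}+(\partial_{\Z}\S)(\partial_{\Z}\DD)^{T}\ge 0$ corresponding to the desired conclusion; the device that bridges the two is the 1-Lipschitz control on \emph{both} components of the parametrization available from Lemma~\ref{onto}, combined with the choice $\vec{w}=T^{T}\X$, which converts the extrinsic norm bound $|\vec{c}^{\ast}|\le|\vec{w}|$ into the intrinsic quadratic-form inequality $TT^{T}\le PP^{T}$ by Cauchy--Schwarz.
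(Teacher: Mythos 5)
Your proof is correct and takes a genuinely different, arguably more rigorous route than the paper's argument. The paper's proof is explicitly flagged as formal: it "assumes that for any couple $(\S,\DD)\in\A$ one can write $\S$ as a Lipschitz mapping of $\DD$," differentiates $\G(\S(\DD),\DD)=\0$ to obtain $\G_{\S}\S_{\DD}+\G_{\DD}=\0$, substitutes, and uses the $ABA^T$ structure together with $\S_{\DD}\ge 0$ (monotonicity) to conclude $-\G_{\DD}\G_{\S}^T=\G_{\S}\S_{\DD}\G_{\S}^T\ge 0$. That simplifying assumption is not in general available for a maximal monotone graph (the set $\S^*(\DD)$ can be multivalued), and the paper chooses to sidestep this rather than repair it. You instead parametrize the graph by $\Z=\S+\DD$, which always yields a well-defined, single-valued, $1$-Lipschitz pair $(\S(\Z),\DD(\Z))$ by Lemma~\ref{onto}; you then carry out a first-order Taylor expansion of $\G$ along the curve $\Z_t=\S_0+\DD_0+t\vec{w}$ to obtain $P\vec{c}^*=-T\vec{w}$ with $|\vec{c}^*|\le|\vec{w}|$ (where $T=\G_{\S}+\G_{\DD}$, $P=\G_{\S}-\G_{\DD}$), and close with the astute choice $\vec{w}=T^T\X$ plus Cauchy--Schwarz to get $TT^T\le PP^T$, which after expansion is exactly the symmetric part of $-2\G_{\DD}\G_{\S}^T$. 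Your route has two advantages: it dispenses with the unjustified "explicit $\S(\DD)$" assumption that the paper itself acknowledges as only formal, and it isolates the precise algebraic device (the $1$-Lipschitz bound on both components feeding into Cauchy--Schwarz) that upgrades the ``transposed'' monotonicity inequality for Jacobians along the $\Z$-parametrization into the required $\G_{\DD}\G_{\S}^T\le 0$; your concluding remark about why a naive differentiation of monotonicity gives a structurally distinct inequality is a genuine insight that the paper does not surface. Both proofs share the same residual caveat, which you flag correctly: $\G_{\S},\G_{\DD}$ exist only at points of total differentiability, so the statement must be read in that sense.
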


\begin{proof}[Proof of Lemma~\ref{GvsAAA}]
We provide here only a formal proof and avoid the use of $\varepsilon$-approximation (as this can be done as in the proof above, see \eqref{pepa998}). Thus, for simplicity, we assume that for any couple $(\S,\DD)\in \A$ one can write $\S$ as a Lipschitz mapping of $\DD$. Then, since for all $\DD$ we have that $\G(\S(\DD),\DD)=\0$, it follows that
$$
\0 = \G_{\S}(\S,\DD)\S_{\DD}(\DD) + \G_{\DD}(\S, \DD).
$$
Hence, by multiplying the result by $(\G_{\S}(\S,\DD))^T$, we see that
$$
-\G_{\DD}(\S, \DD)(\G_{\S}(\S,\DD))^T= \G_{\S}(\S,\DD)\S_{\DD}(\DD)(\G_{\S}(\S,\DD))^T.
$$
Thanks to the fact that $\A$ is monotone, we have that $\S_{\DD}\ge 0$ and consequently, for arbitrary $\Z\in \mathbb{R}^{N\times d}$,
$$
\begin{aligned}
-\G_{\DD}(\S, \DD)(\G_{\S}(\S,\DD))^T \Z \cdot \Z&= \G_{\S}(\S,\DD)\S_{\DD}(\DD)(\G_{\S}(\S,\DD))^T\Z \cdot \Z \\
&= \S_{\DD}(\DD)\left((\G_{\S}(\S,\DD))^T\Z\right) \cdot \left((\G_{\S}(\S,\DD))^T\Z\right) \ge 0,
\end{aligned}
$$
which finishes the proof.
\end{proof}
}

\section{Algebraic \texorpdfstring{$\e$}{e}-approximations of the graph \texorpdfstring{$\A$}{A}}\label{graph-eps}

In this section, we construct two different suitable $\varepsilon$-approximations of the maximal monotone $p$-coercive graph and show that these approximate graphs are Lipschitz continuous and uniformly monotone $2$-coercive graphs. Another advantage of these approximate graphs comes from their algebraic construction that is easy to incorporate into numerical schemes and their implementation. Finally, we study the convergence properties. In fact, the first approximation in Definition~\ref{construct} starts with the notion of maximal monotone $p$-coercive graph, while the second approximation in Definition~\ref{construct2} is directly linked to the implicit constitutive equation $\cG(\S,\DD) = \0$ with $\cG$ fulfilling (G1)-(G4).

{ The structure of this section is the following. We first define three approximations (see Definition~\ref{construct}, Definition~\ref{construct2}, and Remark~\ref{construct3}). Then, in Lemma~\ref{Agraf} and Lemma~\ref{Ggraf}, we study properties of these approximations and present an approach to verifying that the limit of the sequences arising from these $\varepsilon$-approximations fulfills the constitutive equation $\cG(\S,\DD) = \0$. After formulating these convergence lemmas, we comment on the novelties of these results. Finally, we prove first Lemma~\ref{Agraf} and then Lemma~\ref{Ggraf}.}

%First, we define this construction.

\begin{definition}[Construction of the approximate graphs] \label{construct}Let $\A$ be a maximal monotone $p$-coercive graph, see Definition~\ref{maxmongrafA}, and let $\e>0$. We define
\begin{subequations}\label{AAA}
\begin{align}
\Ae &:= \{(\tS,\tD) \in \R^{{N\times d}} \times \R^{{N\times d}}; \exists (\oS,\oD) \in \A, \tS = \oS, \tD = \oD + \e \oS\}, \label{Ae}\\
\Aee &:= \{(\S,\DD) \in \R^{{N\times d}} \times \R^{{N\times d}}; \exists (\tS,\tD) \in \Ae, \S = \tS + \e \tD, \DD = \tD\}. \label{Aee}
\end{align}Agraf
\end{subequations}
\end{definition}

\begin{remark}
There is no apparent reason for the lower and the upper index in the definition of the graph $\Aee$ to be the same. However, making them different (e.g., $\Ae^e$ for $\e, e>0$) would not bring any analytical advantage, generality, or simplicity.
\end{remark}
%{ The second approximation is more related to the implicit setting.
\begin{definition}[Construction of the approximation to the constitutive equations] \label{construct2}
Let $\cG$ satisfy (G1)--(G4) and let $\e>0$. We set $\cG_{\e}(\S,\DD):= \cG(\S-\e \DD,\DD-\e \S)$.
\end{definition}

\begin{remark} \label{construct3}
Instead of { the} null points of $\cG_{\e}$, one could also use an alternative approximation $\tilde{\cG}_{\e}(\S,\DD):= \cG(\S,\DD)\pm \e(\S - \DD)$ { where the positive sign is used if $\cG_{\S}(\S,\DD) \ge 0$ and the negative sign in the opposite case. The approximation $\tilde{\cG}_{\e}$} also leads to \underline{strictly} monotone and \underline{locally} Lipschitz graphs but we cannot guarantee $2$-coercivity and therefore the Hilbert structure of the approximation problem is lost. { That is the reason why we do not further study the convergence properties of the approximation $\tilde{\cG}_{\e}$.} On the other hand, $\tilde{\cG}_{\e}$ seems to be the easiest way of approximating the constitutive equation $\cG(\S,\DD)=\0$ { and we include it in the illustrative Figure~\ref{fig:compar}, where the three approximations $\Aee$, ${\cG}_{\e}$, and $\tilde{\cG}_{\e}$ are compared for two values of $\e$. For this comparison, we approximate the ``step" function on the left in Figure~\ref{figure1}, see also Example~\ref{Exsmykala} in Appendix A.}
\end{remark}

\begin{figure}
\begin{tabular}{ll}
\porovnanie{0.1}{30/7}       %value of \e, \frac{3}{1-\e}
\porovnanie{0.3}{30/7}       %value of \e, \frac{3}{1-\e}
\end{tabular}
\caption{Comparison of different approximations for $\e=0.1$ on the left and $\e=0.3$ on the right. The full line represents (for details see Example~\ref{Exsmykala}) $\g(\j,\dd)=\j - \dd - \tilde{a}(\sqrt{2}/2|\j+\dd|)(\j+\dd)=\0$, the dash dotted line $\Aee=\{(\j,\dd); \g(\j-\e \dd, (1+\e^2)\dd-\e\j)=\0\}$, the dotted line $\g_\e(\j,\dd)= \g(\j-\e \dd, \dd - \e \j)=\0$, and the dashed line stands for $\tilde{\g}_\e(\j, \dd)= \g(\j, \dd)+\e(\j-\dd)$.}
\label{fig:compar}
\end{figure}

For the approximations introduced in Definitions \ref{construct} and \ref{construct2} above, we establish the following results playing a key role in the subsequent analysis developed in this paper.
\begin{lemma}\label{Agraf}
Let $\A$ be a maximal monotone $p$-coercive graph. Then, for every $\e \in (0,1)$, $\Aee$ is a maximal monotone $2$-coercive graph. Moreover, there exists a unique single-valued mapping $\S_{\!\!\e}^*: \R^{{N\times d}} \to \R^{{N\times d}}$ satisfying
\begin{equation}\label{selection}
(\S, \DD) \in \Aee ~\Longleftrightarrow~ \S = \S_{\!\!\e}^*(\DD).
\end{equation}
In addition, $\S_{\!\!\e}^*(\0)=\0$ and $\S_{\!\!\e}^*$ is Lipschitz continuous and uniformly monotone, i.e. there exist $C_1{(\e)}$, $C_2{(\e)}>0$ such that, for all $\DD_1$, $\DD_2\in \R^{{N\times d}}$,
\begin{equation}
\begin{split}
|{\S_{\!\!\e}^*}(\DD_1)-{\S_{\!\!\e}^*}(\DD_2)|&\le C_2{(\e)}|\DD_1-\DD_2|, \\
({\S_{\!\!\e}^*}(\DD_1)-{\S_{\!\!\e}^*}(\DD_2)): (\DD_1-\DD_2)&\ge C_1{(\e)}|\DD_1-\DD_2|^2.
\end{split}
\label{defmonolip}
\end{equation}

Let, for any $U \subset Q$ bounded\footnote{{ The assumption $U \subset Q$ includes the possibility that $U=Q$, but it is useful from the point of view of applications to consider also the case when $U$ is a proper subset of $Q$.}} and measurable, $\Se, \De : U \to \R^{{N\times d}}$ be such that $(\Se, \De) \in \Aee$ a.e. in $U$ and there is a $C>0$ such that
\begin{equation}\label{intbound}
\int_U \Se : \De \ddd x  \ddd t \leq C, \qquad\qquad~\text{ uniformly with respect to } \e.
\end{equation}
Then, there exist $\S \in L^{p'}(U; \R^{{N\times d}})$, $\DD \in L^p(U; \R^{{N\times d}})$ so that (modulo subsequences)
\begin{equation}\label{min2r}
\begin{aligned}
\Se &\tow \S &&\text{ weakly in } L^{\min\{2,p'\}}(U; \R^{{N\times d}}), \\
\De &\tow \DD &&\text{ weakly in } L^{\min\{2,p\}}(U; \R^{{N\times d}}).
\end{aligned}
\end{equation}
Moreover, if
\begin{equation}\label{assomez}
\limsup_{\e \to 0_+} \int_U \Se : \De \ddd x  \ddd t \leq \int_U \S : \DD \d x \d t,
\end{equation}
then $(\S, \DD) \in \A$ almost everywhere in $U$ and,
\begin{equation}\label{limitanasobku}
\Se : \De \tow \S:\DD \quad \text{ weakly in } L^1(U).
\end{equation}
\end{lemma}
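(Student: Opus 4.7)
The backbone of the proof is the explicit bijection $\A \ni (\oS, \oD) \leftrightarrow (\S, \DD) \in \Aee$ given by $\S = (1+\e^2)\oS + \e\oD$, $\DD = \oD + \e\oS$ (equivalently $\oS = \S - \e\DD$, $\oD = (1+\e^2)\DD - \e\S$). Writing $(\oS_i, \oD_i)$ for the $\A$-preimages of arbitrary $(\S_i, \DD_i) \in \Aee$ with $i=1,2$, and applying monotonicity (A2) of $\A$ to $(\oS_1 - \oS_2):(\oD_1 - \oD_2) \ge 0$ gives after a short expansion the sharp inequality
\begin{equation*}
(1+2\e^2)\,(\S_1 - \S_2):(\DD_1 - \DD_2) \ \ge\ \e|\S_1 - \S_2|^2 + \e(1+\e^2)|\DD_1 - \DD_2|^2.
\end{equation*}
From this I read off simultaneously (A2) for $\Aee$ with uniform monotonicity constant $C_1(\e) \sim \e$, Lipschitz continuity of $\DD \mapsto \S$ with $C_2(\e) \sim 1/\e$ via Cauchy--Schwarz, single-valuedness of $\S_\e^*$, and---setting $(\S_2, \DD_2) = (\0, \0)$---the $2$-coercivity (A4) of $\Aee$. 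That $\S_\e^*$ is defined on all of $\R^{N\times d}$ is established by solving, for each given $\DD$, the resolvent equation $(\oS, \DD - \e\oS) \in \A$, i.e. $\DD \in (\DD^* + \e\I)(\oS)$, which is solvable by Lemma~\ref{onto} applied to the inverse multivalued mapping of $\A$. Maximality (A3) of $\Aee$ then follows from Lemma~\ref{onto} applied to the continuous monotone $\S_\e^*$.

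For the convergence statements, the same decomposition yields the key identity
\begin{equation*}
\Se:\De \ =\ (1 + 2\e^2)\,\oS^\e:\oD^\e + \e(1+\e^2)|\oS^\e|^2 + \e|\oD^\e|^2,
\end{equation*}
where $(\oS^\e, \oD^\e) \in \A$ is the $\A$-preimage of $(\Se, \De) \in \Aee$. Combining this with the hypothesis $\int_U \Se:\De \d x \d t \le C$ and the $p$-coercivity (A4) of $\A$ yields uniform bounds of $\oS^\e$ in $L^{p'}(U)$, $\oD^\e$ in $L^p(U)$, and of $\sqrt{\e}\,\oS^\e$, $\sqrt{\e}\,\oD^\e$ in $L^2(U)$. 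Extracting subsequential weak limits $\oS^\e \rightharpoonup \S$ in $L^{p'}$ and $\oD^\e \rightharpoonup \DD$ in $L^p$, the remainders $\e\oS^\e$, $\e\oD^\e$ tend to $0$ strongly in $L^{\min\{2,p'\}}$ and $L^{\min\{2,p\}}$ respectively (using the $\sqrt{\e}$-bounds when the target exponent is $2$, and the $L^{p'}/L^p$ bounds otherwise), so $\Se \rightharpoonup \S$ and $\De \rightharpoonup \DD$ in the stated spaces.

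For the identification $(\S, \DD) \in \A$ almost everywhere under the $\limsup$ hypothesis, I apply a Minty-type argument: the identity and the hypothesis give $\limsup_\e \int_U \oS^\e:\oD^\e \d x \d t \le \int_U \S:\DD \d x \d t$, while for any fixed $(\vec{A}, \vec{B}) \in \A$ the pointwise monotonicity integrated over $U$ reads $\int_U (\oS^\e - \vec{A}):(\oD^\e - \vec{B}) \d x \d t \ge 0$; passing to the limit yields $\int_U (\S - \vec{A}):(\DD - \vec{B}) \d x \d t \ge 0$. Extending to piecewise-constant test pairs $(\vec{A}(x,t), \vec{B}(x,t))$ taking finitely many values in $\A$, together with a countable dense subset of $\A$ and Lebesgue differentiation, upgrades this to the pointwise a.e.\ inequality for every $(\vec{A}, \vec{B}) \in \A$, which by (A3) places $(\S, \DD) \in \A$ almost everywhere. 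Finally, for the weak $L^1$-convergence $\Se:\De \rightharpoonup \S:\DD$: since both $(\oS^\e, \oD^\e)$ and $(\S, \DD)$ now lie in $\A$ a.e., pointwise monotonicity of $\A$ gives the nonnegative quantity $(\oS^\e - \S):(\oD^\e - \DD) \ge 0$ a.e., and its $U$-integral tends to $0$ (read off from the identity together with the already established $\lim \int \oS^\e:\oD^\e \d x \d t = \int \S:\DD \d x \d t$), so this quantity converges strongly to $0$ in $L^1(U)$; expanding $\oS^\e:\oD^\e$ about $(\S,\DD)$ and using the weak convergences then gives $\oS^\e:\oD^\e \rightharpoonup \S:\DD$ weakly in $L^1$, while the $\e$-small terms $\e(1+\e^2)|\oS^\e|^2$, $\e|\oD^\e|^2$ in $\Se:\De - (1+2\e^2)\oS^\e:\oD^\e$ vanish strongly in $L^1$ from the same identity. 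The main obstacle is the Minty-style pointwise identification $(\S, \DD) \in \A$ almost everywhere, which I handle via the localization plus density argument above, since $\A$ is a general, possibly multivalued graph without an a~priori selection device.
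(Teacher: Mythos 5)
Your first half is correct and essentially the same as the paper's: the bijection between $\A$ and $\Aee$ you write out, the monotonicity computation $(1+2\e^2)\Delta\S:\Delta\DD \ge \e|\Delta\S|^2 + \e(1+\e^2)|\Delta\DD|^2$, the resulting Lipschitz/uniform-monotonicity constants, the surjectivity via Lemma~\ref{onto}, and the key identity $\Se:\De = (1+2\e^2)\oS^\e:\oD^\e + \e(1+\e^2)|\oS^\e|^2 + \e|\oD^\e|^2$ together with the uniform bounds and the weak convergences in \eqref{min2r} all check out.

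The identification step is where there is a genuine gap. You establish the unweighted inequality $\int_U (\S - \vec{A}):(\DD - \vec{B})\,\ddd x\,\ddd t \ge 0$ for constant $(\vec{A},\vec{B}) \in \A$, and then claim that ``extending to piecewise-constant test pairs \dots together with \dots Lebesgue differentiation'' upgrades this to the pointwise a.e.\ inequality. This does not work: the global inequality for a piecewise-constant pair $(\vec{A},\vec{B})$ is a single scalar statement $\sum_k \int_{E_k}(\S - \oS_k):(\DD - \oD_k) \ge 0$ which does not decouple into the local inequalities $\int_{E_k}(\S-\oS_k):(\DD-\oD_k)\ge 0$. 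Concretely, if you take $(\vec{A},\vec{B})=(\oS_0,\oD_0)$ on $B_\rho(x_0)\cap U$ and $(\0,\0)$ on the complement, the inequality reads $\int_{B_\rho}(\S-\oS_0):(\DD-\oD_0) + \int_{U\setminus B_\rho}\S:\DD \ge 0$; as $\rho\to 0_+$ the second term persists and the resulting limit is the trivial statement $\int_U\S:\DD\ge 0$, giving no information at $x_0$. The obstruction is that passing to the limit in $\int_U \oS^\e:\oD^\e\,\varphi$ for $\varphi\in L^\infty(U)$, $\varphi\ge 0$, is not licensed by the $\limsup$ hypothesis \eqref{assomez}, which concerns only the unweighted integral over all of $U$.

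The paper closes this gap by first proving the weak $L^1(U)$ convergence $\oS^n:\oD^n \tow \S:\DD$ \emph{before} identifying the limit, via a double-sequence Cauchy argument: for two indices $n,m$ the pointwise nonnegativity $(\oS^n-\oS^m):(\oD^n-\oD^m)\ge 0$, together with the expansion of $\lim_n\lim_m \int_U(\oS^n-\oS^m):(\oD^n-\oD^m)$ using the weak convergences and the $\limsup$ bound, forces $\lim_n\lim_m\int_U |(\oS^n-\oS^m):(\oD^n-\oD^m)| = 0$. This $L^1$-Cauchy statement yields the \emph{weighted} convergence $\int_U \oS^n:\oD^n\,\varphi \to \int_U\S:\DD\,\varphi$ for all $\varphi\in L^\infty(U)$, which is exactly what is needed to localize the monotonicity inequality at Lebesgue points and invoke (A3). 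Your proposed ordering (pointwise identification first, then weak $L^1$ convergence) puts the cart before the horse: without the weighted convergence you cannot localize, and you only obtain the weighted convergence after the Cauchy argument. You should insert the double-sequence step; with it your outline becomes correct, and your subsequent derivation of $\Se:\De\tow\S:\DD$ weakly in $L^1(U)$ from the identity and the vanishing of the $\e$-terms is then fine.
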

The next assertion concerns the properties of $\cG_{\e}$, see Definition \ref{construct2}.
\begin{lemma}\label{Ggraf}
{Let $\cG$ satisfy (G1)--(G4) with $p\in (1,\infty)$.  Then for every $\e \in (0,1)$, the null points of $\cG_{\e}$ generate a maximal monotone $2$-coercive graph $\Aeetilde$. Moreover, there exists a unique single-valued mapping $\S_{\!\!\e}^*: \R^{{N\times d}} \to \R^{{N\times d}}$ satisfying
\begin{equation}\label{selectione}
\cG_{\e}(\S,\DD) ~\Longleftrightarrow~ (\S, \DD) \in \Aeetilde ~\Longleftrightarrow~ \S = \S_{\!\!\e}^*(\DD).
\end{equation}
In addition, $\S_{\!\!\e}^*(\0)=\0$ and $\S_{\!\!\e}^*$ is Lipschitz continuous, uniformly monotone and satisfies \eqref{defmonolip}.

Furthermore, let, for any $U \subset Q$ bounded and measurable, $\Se, \De : U \to \R^{{N\times d}}$ be such that $(\Se, \De) \in \Aeetilde$ a.e. in $U$ and there is a $C>0$ such that
\begin{equation}\label{intbounde}
\int_U \Se : \De \ddd x  \ddd t \leq C, \qquad\qquad~\text{ uniformly with respect to } \e.
\end{equation}
Then, there exist $\S \in L^{p'}(U; \R^{{N\times d}})$, $\DD \in L^p(U; \R^{{N\times d}})$ such that \eqref{min2r} holds true. Moreover, \eqref{assomez} implies \eqref{limitanasobku} and  the limit satisfies $\cG(\S, \DD)=\0$ a.e. in $U$.
}
\end{lemma}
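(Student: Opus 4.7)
The plan hinges on the observation that $\cG_{\e}(\S,\DD)=\0$ is equivalent to $(\S-\e\DD,\DD-\e\S)\in\A$, where $\A$ is the maximal monotone $p$-coercive graph produced from $\cG$ by Lemma~\ref{GvsA}. Setting $\overline{\S}:=\S-\e\DD$ and $\overline{\DD}:=\DD-\e\S$, the affine map $(\overline{\S},\overline{\DD})\mapsto(\S,\DD)=\frac{1}{1-\e^2}(\overline{\S}+\e\overline{\DD},\overline{\DD}+\e\overline{\S})$ is a linear bijection between $\A$ and $\Aeetilde$ for $\e\in(0,1)$. First I would verify (A1)--(A4) for $\Aeetilde$: (A1) is immediate; plugging $(\S_i-\e\DD_i,\DD_i-\e\S_i)\in\A$ into the monotonicity of $\A$ and expanding yields
\[
(1+\e^2)(\S_1-\S_2):(\DD_1-\DD_2)\geq \e\bigl(|\S_1-\S_2|^2+|\DD_1-\DD_2|^2\bigr),
\]
which is uniform monotonicity and, specialized to $(\S_2,\DD_2)=(\0,\0)$, is the $2$-coercivity (A4) with $C_1=\e/(1+\e^2)$, $C_2=0$. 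For maximality (A3) and the single-valued selector, I would rephrase $(\S,\DD)\in\Aeetilde$ for fixed $\DD$ as the resolvent problem ``find $(\overline{\S},\overline{\DD})\in\A$ with $\overline{\DD}+\e\overline{\S}=(1-\e^2)\DD$''; Lemma~\ref{onto} supplies a unique, Lipschitz-dependent solution, yielding a Lipschitz and uniformly monotone $\S_{\!\!\e}^*$ that satisfies \eqref{defmonolip}, \eqref{selectione}, and $\S_{\!\!\e}^*(\0)=\0$.

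Next I would pass to the limit. The identity $\overline{\S^\e}:\overline{\DD^\e}=(1+\e^2)\Se:\De-\e(|\Se|^2+|\De|^2)\leq(1+\e^2)\Se:\De$, combined with \eqref{intbounde}, gives $\int_U \overline{\S^\e}:\overline{\DD^\e}$ uniformly bounded, and the $p$-coercivity of $\A$ then yields uniform $L^{p'}(U)$ and $L^p(U)$ bounds on $\overline{\S^\e}$ and $\overline{\DD^\e}$, respectively. Inverting the change of variables and noting that $\e\overline{\DD^\e}$ and $\e\overline{\S^\e}$ vanish in the respective norms, I extract (up to subsequences) $\Se\rightharpoonup\S$ in $L^{p'}(U)$ and $\De\rightharpoonup\DD$ in $L^p(U)$, which gives \eqref{min2r} and simultaneously $\overline{\S^\e}\rightharpoonup\S$, $\overline{\DD^\e}\rightharpoonup\DD$. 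Under \eqref{assomez}, the same identity also yields $\limsup_{\e\to 0_+}\int_U \overline{\S^\e}:\overline{\DD^\e}\leq\int_U \S:\DD$. A Minty argument on $\A$---for any constant $(\vec{A}',\vec{B}')\in\A$, transport it to $(\vec{A}_\e,\vec{B}_\e)\in\Aeetilde$ via the linear bijection (so that $(\vec{A}_\e,\vec{B}_\e)\to(\vec{A}',\vec{B}')$ strongly), integrate $(\Se-\vec{A}_\e):(\De-\vec{B}_\e)\geq 0$ against $\chi_W$ for arbitrary measurable $W\subset U$, and pass to the weak--strong limit---produces $\int_W(\S-\vec{A}'):(\DD-\vec{B}')\geq 0$, whence standard localization combined with maximality (A3) of $\A$ forces $(\S(x),\DD(x))\in\A$ for a.e.\ $x\in U$, i.e., $\cG(\S,\DD)=\0$ a.e.

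For \eqref{limitanasobku}, I would upgrade to a variable selector $(\vec{A}_\e(x),\vec{B}_\e(x)):=\frac{1}{1-\e^2}(\S(x)+\e\DD(x),\DD(x)+\e\S(x))\in\Aeetilde$, which converges strongly to $(\S,\DD)$ in $L^{p'}(U)\times L^p(U)$. The pointwise inequality $(\Se-\vec{A}_\e):(\De-\vec{B}_\e)\geq 0$, integrated over any measurable $W\subset U$ and combined with the weak--strong pairings, gives $\liminf_{\e\to 0_+}\int_W\Se:\De\geq\int_W\S:\DD$; the complementary $\limsup$ bound on each $W$ follows from the global \eqref{assomez} together with the nonnegativity $\Se:\De\geq 0$ (uniform monotonicity of $\Aeetilde$ at $(\0,\0)$) via the splitting $U=W\cup(U\setminus W)$. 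This yields $\lim_{\e\to 0_+}\int_W\Se:\De=\int_W\S:\DD$ for every measurable $W$, equivalent to weak $L^1$ convergence by Dunford--Pettis, with the required uniform integrability supplied by the $p$-coercivity bounds. The main obstacle will be the bookkeeping in the Minty step: since the Lipschitz constant of $\S_{\!\!\e}^*$ degenerates as $\e\to 0_+$, the approximate test pairs must be constructed at the level of $\A$ via the explicit linear bijection (not through $\S_{\!\!\e}^*$ itself) in order to preserve the strong convergences needed for the weak--strong pairings.
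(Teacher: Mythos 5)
The first half of your proposal---the linear bijection $(\S,\DD)\mapsto(\S-\e\DD,\DD-\e\S)$ between $\Aeetilde$ and $\A$, the uniform monotonicity inequality, the $2$-coercivity, and the appeal to Lemma~\ref{onto} for the single-valued selector---is correct and is essentially what the paper does. (One cosmetic slip: $\Se$ converges weakly in $L^{\min\{2,p'\}}(U)$, not $L^{p'}(U)$, since when $p<2$ the component $\e\overline{\DD^\e}$ is controlled only in $L^2$ via the $2$-coercivity, not in $L^{p'}$; the same caveat applies to $\De$. This matches the spaces in \eqref{min2r}.)

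The convergence half of your argument, however, has a genuine gap in the Minty step, and the fix you propose is circular. You integrate $(\Se-\vec{A}_\e):(\De-\vec{B}_\e)\geq 0$ over an arbitrary measurable $W\subset U$ and ``pass to the weak--strong limit.'' The cross terms $\int_W\Se:\vec{B}_\e$ and $\int_W\vec{A}_\e:\De$ do pass (weak against strong), but $\int_W\Se:\De$ is a weak--weak pairing and does not converge to $\int_W\S:\DD$ for general $W$. The hypothesis \eqref{assomez} provides $\limsup_\e\int_W\Se:\De\le\int_W\S:\DD$ only for $W=U$, not on subsets. You acknowledge this and propose to close it via the $\liminf$/$\limsup$ argument using the variable selector $(\vec{A}_\e(x),\vec{B}_\e(x))=\tfrac{1}{1-\e^2}(\S(x)+\e\DD(x),\DD(x)+\e\S(x))$, but checking membership shows $(\vec{A}_\e,\vec{B}_\e)\in\Aeetilde$ iff $\cG(\S(x),\DD(x))=\0$, i.e., $(\S(x),\DD(x))\in\A$ a.e.---which is precisely what the Minty argument is supposed to produce. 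So the variable-selector step cannot be used to set up the Minty step; the order of dependencies is broken.

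What breaks the circularity in the paper (in Step~4 of the proof of Lemma~\ref{Agraf}, to which the proof of Lemma~\ref{Ggraf} defers) is the double-limit Cauchy argument on the products: by monotonicity of $\A$, $(\overline{\S^n}-\overline{\S^m}):(\overline{\DD^n}-\overline{\DD^m})$ is its own absolute value, and one shows that $\lim_n\lim_m\int_U|(\overline{\S^n}-\overline{\S^m}):(\overline{\DD^n}-\overline{\DD^m})|=0$ from \eqref{assomez} alone, with the global $U$ only. Testing against $\varphi\in L^\infty(U)$ then yields weak $L^1(U)$ convergence $\overline{\S^n}:\overline{\DD^n}\rightharpoonup\S:\DD$ \emph{before} any localization, and only then does the Lebesgue-point argument with constant pairs $(\oS,\oD)\in\A$ and maximality give $(\S,\DD)\in\A$ a.e. Your proposal needs this (or an equivalent device) inserted before the localization; the variable selector can then be used afterward, as you propose, to upgrade to $\Se:\De\rightharpoonup\S:\DD$ as in \eqref{limitanasobku}.
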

These results bring several novelties. First, we approximate, in a constructive way, a general maximal monotone $p$-coercive graph $\A$ by Lipschitz continuous and uniformly monotone $2$-coercive graphs $\Aee$ that can be identified with a single-valued (Lipschitz continuous and uniformly monotone) mapping. For such mappings there are many tools to obtain the existence of solution to the corresponding systems of PDEs. (We provide one such proof in Appendix~\ref{App3}.) Then, referring to the convergence parts of the above lemmas, we observe that to identify the limiting graph, it is just enough to check the validity of \eqref{assomez}. { The fact that the convergence parts of Lemma~\ref{Agraf} and Lemma~\ref{Ggraf} have a local character (i.e. the assumptions and the results hold on an arbitrary subset of $Q$) is important from the point of view of applications.} Note that several subtle tools have been developed to achieve \eqref{assomez} for various nonlinear problems of elliptic or parabolic type, mostly in fluid and solid mechanics, that can be used even if an energy equality is not available (or expressed differently, if the solution itself is not regular enough to be an admissible test function for the limiting problem). We refer to \cite{blechta2020} for details. { It is also worth noticing that our approximations are of a different nature than the commonly used ones. The way how we approximate the graph $\A$ in Definition~\ref{construct} shares certain similarities with the Yosida approximation or with an alternative approach developed in~\cite{FrMuTa04}. However, in our approach, the graph $\Aee$ is constructed completely explicitly as we ``stretch" and rotate the graph with respect to both axes, which is easy to implement, while the Yosida approximation requires the knowledge of the resolvent operator (which can be difficult to identify explicitly) and an alternative approach in \cite{FrMuTa04} is based on the a~priori knowledge of a certain Lipschitz function, which always exists but its explicit description may not be easy to identify (compare also with the proof of Lemma~\ref{onto}). The main advantage of our approach is in introducing the approximation $\cG_{\e}$, see Definition~\ref{construct2}. This is simple and does not require any additional knowledge of auxiliary functions or mappings.} Finally, we would like to point it out that  the approximation $\cG_{\e}$ might be more efficient when solving the problem~\eqref{problem} numerically, while the approximation $\Aee$ is easier to handle from the theoretical point of view and follows the classical  approaches in the theory of maximal monotone graphs.

\begin{proof}[Proof of Lemma~\ref{Agraf}] Throughout the proof of Lemma~\ref{Agraf}, we follow the notation indicated in the Definition~\ref{construct}, namely $(\S, \DD) \in \Aee$, $(\tS, \tD) \in \Ae$, and $(\oS, \oD) \in \A$, possibly with indices. The only exception is the limiting object defined in \eqref{min2r} as it is not a~priori defined to be in any of the graphs. We hope this notation may help to clarify the construction as well as the limiting procedure.

\smallskip
\paragraph{{\bf Step 1.} \underline{The existence of $\S^*_{\!\!\e}$}} In \eqref{multivalueD}, we identified the maximal monotone graph $\A$ with a possibly multivalued maximal monotone mapping $\DD^*$ defined on a subset of $\R^{{N\times d}}$. Thanks to Lemma~\ref{onto} we know that $\DD^* + \e \I$ is onto $\R^{{N\times d}}$ for any $\e \in (0,1]$. %Let us define the mapping $\tD^*:= \DD^* + \e \I$.
This surjectivity and the definition of $\DD^*$ then imply that for any $\tD \in \R^{{N\times d}}$ there is a couple $(\oS,\oD) \in \A$ such that
$\tD = \oD + \e \oS$. Setting simply $\tS:= \oS$, then
\begin{equation}\label{foranyDS}
\begin{aligned}
\text{for any $\tD \in \R^{{N\times d}}$ there exist $(\oS,\oD) \in \A$ and $\tS \in \R^{{N\times d}}$ such that }\\
\tS = \oS, \hspace{.6cm} \tD = \oD + \e \oS, \hspace{.3cm} \text{ and } \hspace{.3cm} (\tS,\tD) \in \Ae.\hspace{1.6cm}
\end{aligned}
\end{equation}
By definition of $\Aee$, for any $(\S, \DD) \in \Aee$ there exists a couple $(\tS,\tD) \in \Ae$ such that $\S = \tS + \e \tD, \DD = \tD$. However, thanks to \eqref{foranyDS}, we obtain that
\begin{equation*}
\text{for any $\DD \in \R^{{N\times d}}$, there exists a $\S \in \R^{{N\times d}}$ such that $(\S, \DD) \in \Aee$,}
\end{equation*}
which guarantees the existence {of} a mapping $\S_{\!\!\e}^*$ as defined in \eqref{selection}.

\smallskip
\paragraph{{\bf Step 2.} \underline{Properties of $\S_{\!\!\e}^*$ { and maximality of $\Aee$}}} To prove its properties, for $i=1,2$, let $(\S_i, \DD_i) \in \Aee$, $(\tS_i, \tD_i) \in \Ae$, and $(\oS_i, \oD_i) \in \A$, which relate to each other according to the definitions in \eqref{AAA}.
By means of the monotonicity (A2), we obtain
\begin{equation}\label{lipsch}
\begin{aligned}
(\tS_1 - \tS_2):(\tD_1 - \tD_2) &= (\oS_1 - \oS_2):(\oD_1 - \oD_2 + \e(\oS_1 - \oS_2)) \\
&\geq \e |\oS_1 - \oS_2|^2 = \e |\tS_1 - \tS_2|^2.
\end{aligned}
\end{equation}
{ This then implies that} %which implies that $|\tS_1 - \tS_2|^2 \leq c(\e) |\tD_1 - \tD_2|^2$.  Moreover, using \eqref{lipsch}, we obtain
\begin{align*}
(\S_1 - \S_2)&:(\DD_1 - \DD_2) = (\tS_1 - \tS_2 + \e (\tD_1 - \tD_2)):(\tD_1 - \tD_2) \\
&\geq \e |\tS_1 - \tS_2|^2 + \e |\tD_1 - \tD_2|^2 \\
&= \e |\S_1 - \S_2 - \e (\DD_1 - \DD_2)|^2 + \e |\DD_1 - \DD_2|^2 \\
&= \e \left(|\S_1 -\S_2 |^2 + (1+\e^2) |\DD_1 - \DD_2|^2  - 2\e (\S_1 -\S_2):(\DD_1 - \DD_2)\right),
\end{align*}
and consequently
\begin{equation*}
(\S_1 - \S_2):(\DD_1 - \DD_2)\geq \frac{\e}{1+2\e^2} \left(|\S_1 -\S_2 |^2 + (1+\e^2) |\DD_1 - \DD_2|^2 \right),
\end{equation*}
which proves the Lipschitz continuity and the uniform monotonicity of $\S_{\!\!\e}^*${, the latter implying that $\S_{\!\!\e}^*$ is a single-valued mapping.} It also gives the $2$-coercivity (A4) of $\Aee$ (and $\S_{\!\!\e}^*$ as well) by taking $(\S_2,\DD_2) = (\0,\0)$.

{ The maximality of $\Aee$ follows from the properties of the mapping $\S_{\!\!\e}^*$ by applying Minty's method. Indeed, let $(\S,\DD)\in \mathbb{R}^{N\times d} \times \mathbb{R}^{N\times d}$ be given such that for all $(\Se,\De)\in \Aee$ there holds
$$
(\S - \Se):(\DD - \De)\geq 0.
$$
Then, for arbitrary $\Z\in \mathbb{R}^{N\times d}$ and $\delta>0$, we can set $\De:=\DD - \delta \Z$ and $\Se:=\S_{\!\!\e}^*(\De)$. Obviously, $(\Se, \De)\in \Aee$. Consequently, the above inequality gives
$$
(\S - \S_{\!\!\e}^*(\DD - \delta \Z)):\Z\geq 0.
$$
Letting $\delta \to 0_+$ and using the continuity of  $\S_{\!\!\e}^*$ we conclude that
$$
(\S - \S_{\!\!\e}^*(\DD)):\Z\geq 0 \quad \text{ for all } \Z\in \mathbb{R}^{N\times d} \qquad  \implies \qquad \S = \S_{\!\!\e}^*(\DD).
$$
Consequently, $(\S,\DD)\in \Aee$.}

\paragraph{{\bf Step 3.} \underline{Proof of \eqref{min2r}}} Let $(\Se, \De) \in \Aee$ almost everywhere in $U$, and let $\int_U \Se : \De \d x {\d t}\leq C$. From the $2$-coercivity of the graph $\Aee$, we know that $\Se, \De \in L^2({U}; \R^{{N\times d}})$ and for
\begin{equation}\label{defto}
\Sno := \Se - \e \De, \hspace{1cm} \Dno:= \De - \e \Sno
\end{equation}
we have that $(\Sno, \De) \in \Ae$ almost everywhere in ${U}$ and $(\Sno, \Dno) \in \A$ almost everywhere in ${U}$. Thanks to the monotonicity (A2) {of $\A$},
\begin{equation}\label{ee22}
\Se \!:\! \De \!=\!(\Sno + \e \De)\!:\! \De \!=\! \e |\De|^2 + \Sno\!:\!\De\!=\! \e |\De|^2 + \e |\Sno|^2 + \Sno\!:\!\Dno \geq 0,
\end{equation}
but also
\begin{equation}\label{Acoerc}
\Se : \De \geq \e |\De|^2 + \e |\Sno|^2 + C_1 |\Sno|^{p'} + C_1 |\Dno|^{p} - C_2,
\end{equation}
due to the $p$-coercivity (A4) of $\A$. Therefore, using the assumption \eqref{intbound},
\begin{equation}\label{es}
\int_U \e |\De|^2 + \e |\Sno|^2 + |\Sno|^{p'} + |\Dno|^{p} \d x \d t \leq C, \qquad\qquad ~\text{ uniformly with respect to } \e.
\end{equation}
Using the definitions in \eqref{defto},
\begin{align*}
\int_U |\Se|^{\min\{2,p'\}} \d x \d t &= \int_U |\Sno + \e \De|^{\min\{2,p'\}} \d x \d t \leq C, \\
\int_U |\De|^{\min\{2,p\}} \d x \d t &= \int_U |\Dno + \e \Sno|^{\min\{2,p\}} \d x \d t\leq C,
\end{align*}
and due to reflexivity of $L^p(Q; \R^{{N\times d}})$ for any $p >1$, there exist $\S$, $\DD$, $\oS$, and $\oD$ such that
\begin{equation}\label{conver}
\begin{aligned}
\Se &\tow \S &&\text{weakly in } L^{\min\{2,p'\}}(U; \R^{{N\times d}}), \\
\De &\tow \DD &&\text{weakly in } L^{\min\{2,p\}}(U; \R^{{N\times d}}), \\
\Sno &\tow \oS &&\text{weakly in } L^{p'}(U; \R^{{N\times d}}), \\
\Dno &\tow \oD &&\text{weakly in } L^p(U; \R^{{N\times d}}).
\end{aligned}
\end{equation}
Next, we show that $\S = \oS$ almost everywhere in $U$ and $\DD = \oD$ almost everywhere in $U$. From \eqref{es} and \eqref{conver} we have
\begin{equation}\label{to0}
\begin{aligned}
\e \De &\tow \0 &&\text{ weakly in } L^2(U; \R^{{N\times d}}), \\
\e \Sno &\tow \0 &&\text{ weakly in } L^2(U; \R^{{N\times d}}),
\end{aligned}
\end{equation}
and also
\begin{equation}\label{rovne}
\begin{aligned}
\S \leftharpoonup \Se &= \Sno + \e \De \tow \oS &&\implies \hspace{1.07cm} \S = \oS\,\,\text{ in } L^{\min\{2,p'\}}(U; \R^{{N\times d}}),  \\
\DD \leftharpoonup \De &= \Dno + \e \Sno \tow \oD &&\implies \hspace{1cm} \DD = \oD\, \text{ in } L^{\min\{2,p\}}(U; \R^{{N\times d}}).
\end{aligned}
\end{equation}
Together, \eqref{conver} and \eqref{rovne} prove the statement \eqref{min2r}. { The convergence results \eqref{conver}--\eqref{rovne} hold for a properly chosen subsequence $\e_n \to 0_+$ that is from now on considered to be fixed.}

\smallskip

\paragraph{{\bf Step 4.} \underline{Proof of \eqref{limitanasobku} for $\overline{\S^n}$ and $\overline{\DD^n}$}}  {For an arbitrary sequence $(\S^{\e_n}, \DD^{\e_n})\in \A^{\e_n}_{\e_n}$ satisfying \eqref{intbound} and \eqref{min2r} as $\varepsilon_n \to 0_+$, we set $(\S^n,\DD^n):=(\S^{\e_n}, \DD^{\e_n})$ and $(\S^m,\DD^m):=(\S^{\e_m}, \DD^{\e_m})$. Then, using~\eqref{defto} as an inverse definition to~\eqref{AAA}, we define $(\overline{\S^{n}},\overline{\DD^{n}})$ and $(\overline{\S^{m}},\overline{\DD^{m}}) \in \A$. In this step, we prove~\eqref{oeweakly}, which is~\eqref{limitanasobku} for $(\overline{\S^{n}},\overline{\DD^{n}}) \in \A$, and in Step 5, we finish the proof of~\eqref{limitanasobku} for $(\S^{n}, \DD^{n})\in \A^{\e_n}_{\e_n}$.  } From the monotonicity of $\A$,
\begin{equation}\label{abshod}
|(\overline{\S^n} - \overline{\S^m}):(\overline{\DD^n} - \overline{\DD^m})| = (\overline{\S^n} - \overline{\S^m}):(\overline{\DD^n} - \overline{\DD^m}).
\end{equation}
Also, for any fixed $m,n$, one has that $\overline{\S^n},\overline{\S^m},\overline{\DD^n},\overline{\DD^m} \in L^2(Q; \R^{{N\times d}})$. Then, for any $U \subset Q$, using the weak convergence results \eqref{conver} and \eqref{rovne},
\begin{equation}\label{limm}
\begin{aligned}
\limsup_{m\to \infty} &\int_U (\overline{\S^n} - \overline{\S^m}):(\overline{\DD^n} - \overline{\DD^m}) \d x \d t \\
&= \limsup_{m\to \infty} \int_U \overline{\S^n} :(\overline{\DD^n} - \overline{\DD^m}) + \overline{\S^m}:(\overline{\DD^m}- \overline{\DD^n}) \d x \d t \\
&= \int_U \overline{\S^n}:(\overline{\DD^n} - \DD) \d x \d t - \int_U \S :\overline{\DD^n} \d x \d t + \limsup_{m \to \infty} \int_U \overline{\S^m}:\overline{\DD^m} \d x \d t
\end{aligned}
\end{equation}
and
\begin{equation}\label{limnm}
\begin{aligned}
\limsup_{n\to \infty} \limsup_{m\to \infty} &\int_U (\overline{\S^n} - \overline{\S^m}):(\overline{\DD^n} - \overline{\DD^m}) \d x \d t \\
&= \limsup_{n\to \infty} \int_U \overline{\S^n} :\overline{\DD^n} \d x \d t - \liminf_{n\to \infty} \int_U \overline{\S^n} :\DD \d x \d t \\
&\quad - \liminf_{n\to \infty} \int_U \S :\overline{\DD^n} \d x \d t + \limsup_{m\to \infty} \int_U \overline{\S^m}:\overline{\DD^m} \d x \d t\\
&= 2\left(\limsup_{n \to \infty} \int_U \overline{\S^n}:\overline{\DD^n} \d x \d t - \int_U \S :\DD \d x \d t   \right).
\end{aligned}
\end{equation}
However, using the definitions \eqref{defto} {and the computation~\eqref{ee22}}, we obtain the estimate
\begin{equation*}
{\overline{\S^n}:\overline{\DD^n} \leq \S^n : \DD^n,}
\end{equation*}
and if we combine it with the assumption \eqref{assomez}, we arrive at
\begin{equation}\label{useass}
\limsup_{n\to \infty} \int_U \overline{\S^n}:\overline{\DD^n} \d x \d t \leq  \limsup_{n\to \infty} \int_U \S^n:\DD^n \d x \d t \leq \int_U \S:\DD \d x \d t.
\end{equation}
Now, the results \eqref{abshod}, \eqref{limnm} and \eqref{useass} together imply that
\begin{equation}\label{abs0}
\lim_{n\to \infty} \lim_{m\to \infty} \int_U |(\overline{\S^n} - \overline{\S^m}):(\overline{\DD^n} - \overline{\DD^m})| \d x \d t =0,
\end{equation}
which proves that, for any $\varphi \in L^\infty(U)$,
\begin{equation*}
\lim_{n\to \infty} \lim_{m\to \infty} \int_U (\overline{\S^n} - \overline{\S^m}):(\overline{\DD^n} - \overline{\DD^m}) \varphi \d x \d t =0.
\end{equation*}
Using the boundedness of $\varphi$ and a procedure very similar to that in \eqref{limm} and \eqref{limnm} we observe that
\begin{align*}
0 &= \lim_{n\to \infty} \left( \int_U \overline{\S^n}:(\overline{\DD^n} -\DD)\varphi \d x \d t - \!\int_U\! \S :\overline{\DD^n} \varphi \d x \d t + \!\lim_{m\to \infty} \int_U \overline{\S^m}:\overline{\DD^m} \varphi \d x \d t \right)\\
&= 2\left(\lim_{n\to \infty} \int_U \overline{\S^n}:\overline{\DD^n} \varphi \d x \d t - \int_U \S :\DD \varphi \d x \d t \right).
\end{align*}
As this is true for any $\varphi \in L^\infty(U)$, we obtain
\begin{equation}\label{oeweakly}
\overline{\S^n}:\overline{\DD^n} \tow \S :\DD~ \text{ weakly in }~L^1(U).
\end{equation}

\smallskip

\paragraph{{\bf Step 5.} \underline{$(\S, \DD) \in \A$}} Let $\x$ be a Lebesgue point of $\S$, $\DD$, and $\S\!:\!\DD$. Let $(\oS, \oD) \in \A$ be arbitrary (independent of $\e$ and $\x$). Then, for any $\varphi \in L^\infty(U)$, $\varphi \geq 0$, using the monotonicity of $\A$ and the weak convergence results \eqref{conver}, \eqref{rovne}, and \eqref{oeweakly}, we have that
\begin{align*}
0 &\leq \lim_{\e_n \to 0_+} \int_U (\overline{\S^n} - \oS):(\overline{\DD^n} - \oD)\varphi \d x \d t \\
&=\lim_{\e_n \to 0_+} \int_U \overline{\S^n}:\overline{\DD^n} \varphi  -\overline{\S^n}: \oD \varphi - \oS:(\overline{\DD^n} - \oD) \varphi \d x \d t \\
&= \int_U (\S - \oS):(\DD - \oD) \varphi \d x \d t.
\end{align*}
Set $\varphi := \frac{1}{|B_\rho(\x)|}\chi_{B_\rho(\x)}$, and let $\rho \to 0_+$. Since $\x$ is a Lebesgue point,
\begin{equation*}
0\leq \lim_{\rho \to 0_+} \frac{1}{|B_\rho(\x)|} \int_{B_\rho(\x)} (\S - \oS):(\DD - \oD) \d x \d t = (\S(\x) - \oS):(\DD(\x) - \oD),
\end{equation*}
and this holds for any $(\oS, \oD) \in \A$; thus, by the maximality of $\A$, see (A3), we obtain that $(\S(\x),\DD(\x)) \in \A$.

Finally, we converge with $(\S^n,\DD^n)\in \A^{\e_n}_{\e_n}$, using the assumption \eqref{assomez} and the results \eqref{ee22} for the first, and \eqref{limnm} with \eqref{abs0} for the second equality,
\begin{align*}
\int_U \S:\DD \d x \d t  &\geq \limsup_{\e_n \to 0_+} \int_U \S^n:\DD^n \d x \d t \\
&= \limsup_{\e_n \to 0_+} \int_U  \overline{\S^n} : \overline{\DD^n} + \e_n |\DD^n|^2 + \e_n |\overline{\S^n}|^2 \d x \d t \\
& = \int_U \S:\DD \d x \d t + \limsup_{\e_n \to 0_+} \int_U \e_n |\DD^n|^2 + \e_n |\overline{\S^n}|^2 \d x \d t.
\end{align*}
Hence, the last integral vanishes as $\e_n \to 0_+$, and therefore $(\sqrt{\e_n}\, \DD^n)$ and $(\sqrt{\e_n}\,\overline{\S^n})$ converge strongly to zero in $L^2(U; \R^{{N\times d}})$, in contrast with the weak convergence result in \eqref{to0}.

Finally, since $(\S^n : \DD^n) = ( \overline{\S^n} : \overline{\DD^n} + \e_n|\overline{\S^n}|^2 + \e_n|\DD^n|^2)$, we use that the first term converges weakly in $L^1(U)$ to the desired limit thanks to~\eqref{oeweakly} and the last two terms converge strongly to zero in $L^1(U)$ to obtain the final statement \eqref{limitanasobku}.
\end{proof}

\begin{proof}[Proof of Lemma~\ref{Ggraf}] %We provide here only a sketch of the proof since it is very similar to the proof of Lemma~\ref{Agraf}.
First, recalling that $\cG_{\e}(\S,\DD) = \cG(\S-\e\DD,\DD-\e\S)$, it is straightforward to observe that $(\bar{\S},\bar{\DD})$ is a null point of $\cG$ if and only if the couple $(\S,\DD)$ defined through
\begin{equation}\label{strik}
\DD= \frac{\bar{\DD}+\e \bar{\S}}{(1-\e^2)}, \qquad \S= \frac{\bar{\S}+\e \bar{\DD}}{(1-\e^2)}
\end{equation}
is a null point of $\cG_{\e}$. Then, since $\DD^* + \e \I$ is onto (see \eqref{multivalueD} for the definition of $\DD^*$ and Step 1 in proof of Lemma~\ref{onto}), we see that $\DD$ can be understood as a function of $\S$ and analogously (by interchanging the role of $\S$ and $\DD$) $\S$ can be understood as a function of $\DD$. Next, we show that these mappings are uniformly monotone and Lipschitz continuous, which implies that $\cG_{\e}$ generates a maximal monotone $2$-coercive graph. Indeed, let $(\S_1,\DD_1)$ and $(\S_2, \DD_2)$ be two null points of $\cG_{\e}$. Then, $(\S_i-\e\DD_i, \DD_i-\e\S_i)$ are null points of $\cG$ and, as the graph generated by $\cG$ is by Lemma \ref{GvsA} monotone, we have
$$
\begin{aligned}
0&\le ((\S_1-\e\DD_1) - (\S_2-\e\DD_2)): ((\DD_1-\e\S_1)-(\DD_2-\e\S_2))\\
&= ((\S_1-\S_2)-\e(\DD_1-\DD_2)): ((\DD_1-\DD_2)-\e(\S_1-\S_2))\\
&= (1+\e^2)(\S_1-\S_2):(\DD_1-\DD_2) -\e(|\DD_1-\DD_2|^2+ |\S_1-\S_2|^2).
\end{aligned}
$$
Consequently,
\begin{equation}\label{inimmm}
\frac{\e}{1+\e^2}(|\DD_1-\DD_2|^2+ |\S_1-\S_2|^2)\le (\S_1-\S_2):(\DD_1-\DD_2),
\end{equation}
which is the desired uniform monotonicity and which implies, after applying the Cauchy--Schwarz inequality to the right-hand side, the Lipschitz continuity. Consequently, the null points of $\cG_{\e}$ generate { a monotone $2$-coercive graph. The maximality then follows from Minty's method; compare also with Step~2 of the proof of Lemma~\ref{Agraf}.}

The rest of the proof coincides with the proof of Lemma~\ref{Agraf} with necessary minor changes due to a slightly different relation between the null points of $\cG$ and $\cG_{\e}$ given by \eqref{strik} and the relation between the graphs $\A$ and $\Aee$ given by Definition~\ref{construct}.
\end{proof}

\subsection*{Further auxiliary results}

We finish this section by stating three results. Two of them, Lemma \ref{aproxvA} and Lemma \ref{lemaodhmin}, will be needed in the proof of the main theorem. The third result, see Lemma \ref{pidilema}, is of independent interest within the context of earlier established results requiring a~priori the existence of a Borel measurable selection.

The first result establishes the condition that guarantees the stability of the graph $\A$ with respect to weakly converging sequences. It is a simpler variant of Lemma \ref{Agraf} above.
\begin{lemma}\label{aproxvA}
Let $\A$ be a maximal monotone $p$-coercive graph and let $U\subset{(0,T)\times} \R^d$ be a measurable bounded set. Assume that for every $n\in \mathbb{N}$, the mappings $\S^n, \DD^n : U \to \R^{{N\times d}}$ are such that $(\S^n, \DD^n) \in \A$ almost everywhere in $U$. In addition, let
\begin{equation*}
\int_U \S^n : \DD^n \ddd x \ddd t \leq C, \qquad\qquad ~\text{ uniformly with respect to}~ n \in \N.
\end{equation*}
Then, there exist $\S \in L^{p'}(U;\R{^{N\times d})}$ and $\DD\in L^p(U;\R^{{N\times d}})$ such that
\begin{align*}
\S^n &\tow \S &&\text{weakly in } L^{p'}(U; \R^{{N\times d}}), \\
\DD^n &\tow \DD &&\text{weakly in } L^{p}(U; \R^{{N\times d}}).
\end{align*}
Moreover, if
\begin{equation*}
\limsup_{n \to \infty} \int_U \S^n : \DD^n \ddd x \ddd t \leq \int_U \S : \DD \ddd x \ddd t,
\end{equation*}
then $(\S,\DD) \in \A$ almost everywhere in $U$ and $\S^n:\DD^n \tow \S:\DD$ weakly in $L^1(U)$.
\end{lemma}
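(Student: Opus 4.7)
The plan is to follow the strategy of Steps~3--5 of the proof of Lemma~\ref{Agraf}, which becomes substantially simpler here since no $\varepsilon$-approximation is needed: each $(\S^n,\DD^n)$ already lies in $\A$ itself.

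First, I invoke the $p$-coercivity (A4), which gives $\S^n:\DD^n \geq C_1(|\S^n|^{p'}+|\DD^n|^p)-C_2$ a.e.\ in $U$. Integrating over the bounded set $U$ and combining with the uniform bound on $\int_U \S^n:\DD^n \d x \d t$, we obtain uniform bounds on $\{\S^n\}$ in $L^{p'}(U;\R^{N\times d})$ and on $\{\DD^n\}$ in $L^p(U;\R^{N\times d})$. Reflexivity then yields, up to a subsequence (not relabeled), $\S^n\rightharpoonup \S$ in $L^{p'}(U;\R^{N\times d})$ and $\DD^n \rightharpoonup \DD$ in $L^p(U;\R^{N\times d})$, which establishes the unconditional part of the statement.

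Now assuming the $\limsup$ hypothesis, I would establish the weak $L^1$ convergence $\S^n:\DD^n \rightharpoonup \S:\DD$. The monotonicity (A2) applied pointwise to $(\S^n,\DD^n),(\S^m,\DD^m)\in\A$ gives $(\S^n-\S^m):(\DD^n-\DD^m)\geq 0$ a.e.\ in $U$. Integrating and expanding, then letting $m\to\infty$ first (using that $\int \S^n:\DD^m \d x \d t \to \int \S^n:\DD \d x \d t$ and $\int \S^m:\DD^n \d x \d t \to \int \S:\DD^n \d x \d t$ by the weak convergence already proved), and letting $n\to\infty$ afterwards, one computes
\[
\limsup_{n\to\infty}\limsup_{m\to\infty}\int_U (\S^n-\S^m):(\DD^n-\DD^m) \d x \d t = 2\Bigl(\limsup_{n\to\infty}\int_U \S^n:\DD^n \d x \d t - \int_U \S:\DD \d x \d t\Bigr)\leq 0,
\]
the final inequality being exactly the $\limsup$ hypothesis. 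Since the integrand is non-negative, the iterated limit vanishes. Multiplying by an arbitrary $\varphi\in L^\infty(U)$ (split into $\varphi^+-\varphi^-$) and repeating the expansion yields, just as in Step~4 of Lemma~\ref{Agraf}, that $\int_U \S^n:\DD^n \varphi \d x \d t \to \int_U \S:\DD \varphi \d x \d t$ for every $\varphi\in L^\infty(U)$, i.e. $\S^n:\DD^n \rightharpoonup \S:\DD$ weakly in $L^1(U)$.

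Finally, I identify the graph by Minty's trick based on maximality (A3). For any fixed $(\oS,\oD)\in\A$ and any non-negative $\varphi\in L^\infty(U)$, monotonicity gives $\int_U (\S^n-\oS):(\DD^n-\oD)\varphi \d x \d t \geq 0$. Passing to the limit using the weak $L^{p'}$-convergence of $\S^n$, the weak $L^p$-convergence of $\DD^n$, and the weak $L^1$-convergence of $\S^n:\DD^n$ established above, we obtain $\int_U (\S-\oS):(\DD-\oD)\varphi \d x \d t \geq 0$. Choosing $\varphi=|B_\rho(\x)|^{-1}\chi_{B_\rho(\x)}$ at a common Lebesgue point $\x$ of $\S,\DD,\S:\DD$ and letting $\rho\to 0_+$ gives $(\S(\x)-\oS):(\DD(\x)-\oD)\geq 0$ for every $(\oS,\oD)\in\A$, and maximality (A3) then forces $(\S(\x),\DD(\x))\in\A$ for a.e.\ $\x\in U$. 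The only delicate step is the iterated-limit computation of the second paragraph, but as it is a verbatim simplification of Step~4 of Lemma~\ref{Agraf}, no real obstacle is expected.
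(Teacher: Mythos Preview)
Your proposal is correct and follows precisely the route indicated by the paper, which simply refers back to Steps~3--5 of Lemma~\ref{Agraf} (with the simplification you note: no $\varepsilon$-layer, so $(\S^n,\DD^n)$ already lies in $\A$ and the cross-products $\S^n:\DD^m$ are integrable by H\"older since $\S^n\in L^{p'}$, $\DD^m\in L^p$). The splitting $\varphi=\varphi^+-\varphi^-$ is unnecessary, since once the iterated limit of $\int_U|(\S^n-\S^m):(\DD^n-\DD^m)|$ vanishes you may bound $|\int_U(\S^n-\S^m):(\DD^n-\DD^m)\varphi|$ by $\|\varphi\|_\infty$ times that quantity directly; but this does no harm.
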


\begin{proof}
See Lemma 1.2.2 in \cite{BGMS3} or Lemma \ref{Agraf} above.
\end{proof}

We also prove the uniform ($\e$-independent) coercivity estimate for $\Aee$.
\begin{lemma}\label{lemaodhmin}
There exist $\tilde{C_1}, \tilde{C_2} \in \R_+$ such that for all $\e\in (0,1)$ and all  $(\Se,\De)\in \Aee$ there holds
\begin{equation}\label{odhadAr2}
\Se : \De \geq \tilde{C_1}(|\Se|^{\min\{p', 2\}}) + |\De|^{\min\{p, 2\}}) - \tilde{C_2}.
\end{equation}
\end{lemma}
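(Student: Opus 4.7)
The plan is to start from the computations already performed in the proof of Lemma~\ref{Agraf}, namely the identities in~\eqref{defto} together with the estimates~\eqref{ee22} and~\eqref{Acoerc}, and then to interpolate using elementary Young-type inequalities to eliminate any dependence on~$\e$ on the right-hand side. More precisely, writing $\Sno:=\Se-\e\De$ and $\Dno:=\De-\e\Sno$, so that $(\Sno,\Dno)\in\A$ and
\[
\Se=\Sno+\e\De,\qquad \De=\Dno+\e\Sno,
\]
the inequality \eqref{Acoerc} provides
\[
\Se:\De \,\geq\, \e|\De|^{2}+\e|\Sno|^{2}+C_{1}|\Sno|^{p'}+C_{1}|\Dno|^{p}-C_{2},
\]
and this is the only estimate I shall work with.

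The argument then splits into the two cases $p\geq 2$ (so $p'\leq 2$) and $p<2$ (so $p'>2$), which are symmetric. I describe the first one; the second is obtained verbatim by swapping the roles of the pairs $(\Sno,\Se,p')$ and $(\Dno,\De,p)$. For $p\geq 2$ one has the trivial bounds $|\Dno|^{p'}\leq|\Dno|^{p}+1$ and $|\Dno|^{2}\leq|\Dno|^{p}+1$; using $\Se=(1+\e^{2})\Sno+\e\Dno$ together with $0<\e<1$ we get $|\Se|^{p'}\leq C\bigl(|\Sno|^{p'}+|\Dno|^{p}+1\bigr)$, and using $\De=\Dno+\e\Sno$ with $\e^{2}\leq\e$ we get $|\De|^{2}\leq 2|\Dno|^{p}+2\e|\Sno|^{2}+2$. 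Splitting the available lower bound as
\[
C_{1}|\Sno|^{p'}+C_{1}|\Dno|^{p}=\Bigl(\tfrac{C_{1}}{2}|\Sno|^{p'}+\tfrac{C_{1}}{2}|\Dno|^{p}\Bigr)+\Bigl(\tfrac{C_{1}}{2}|\Sno|^{p'}+\tfrac{C_{1}}{2}|\Dno|^{p}\Bigr),
\]
the first bracket absorbs the $|\Se|^{p'}$ contribution via the bound on $|\Se|^{p'}$ above, while the second bracket together with the term $\e|\Sno|^{2}$ already present controls $|\De|^{2}$ via the bound on $|\De|^{2}$. Since all constants that appear depend only on $p$, $C_{1}$, $C_{2}$ and not on $\e\in(0,1)$, one arrives at~\eqref{odhadAr2} with the exponents $p'=\min\{p',2\}$ and $2=\min\{p,2\}$.

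The case $p<2$ is completely analogous: now $|\Se|^{2}\leq 2|\Sno|^{p'}+2\e|\De|^{2}+2$ and $|\De|^{p}\leq C\bigl(|\Dno|^{p}+|\Sno|^{p'}+1\bigr)$, and one splits $C_{1}|\Sno|^{p'}+C_{1}|\Dno|^{p}$ so that one half combines with the $\e|\De|^{2}$ term to dominate $|\Se|^{2}$ and the other half dominates $|\De|^{p}$. There is no real obstacle here; the only point that one has to be attentive to is that $\e$ and $\e^{2}$ always appear with a helpful sign, and that powers of $\e$ in $(0,1)$ can always be absorbed either by themselves or by the $\e$-free $p$-coercivity estimate of the base graph $\A$. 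Thus the delicate balance is purely between the exponents $p$, $p'$ and $2$, and this balance is precisely what forces the appearance of $\min\{p,2\}$ and $\min\{p',2\}$ in the statement.
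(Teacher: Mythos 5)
Your proof is correct and follows essentially the same route as the paper's: start from the inherited $p$-coercivity of the underlying graph $\A$ encoded in \eqref{Acoerc}, bound $|\Se|^{\min\{p',2\}}$ and $|\De|^{\min\{p,2\}}$ in terms of the $\A$-quantities plus the cushion terms $\e|\Sno|^2$, $\e|\De|^2$, and then absorb. The paper handles both ranges of $p$ simultaneously via the $\min\{\cdot,\cdot\}$ notation and keeps the intermediate decompositions $\Se = \Sno + \e\De$, $\De = \Dno + \e\Sno$ rather than eliminating $\De$ as you do, but these are cosmetic differences, not a different argument.
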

\begin{proof}
Let $(\S,\DD)\in \A$ be the couple corresponding to $(\Se,\De)\in \Aee$ according to Definition~\ref{construct}. Then
\begin{equation*}
\Se = \S + \e \De, ~\text{ and }~ \De = \DD+\e\S.
\end{equation*}
Now, using the $p$-coercivity of $\A$, we get \eqref{Acoerc}, and if we compute
\begin{align*}
|\De|^{\min\{p,2\}} &= |\DD + \e \S|^{\min\{p,2\}}\leq C(|\DD|^p + \e |\S|^2 +1), \\
|\Se|^{\min\{p',2\}} &= |\S + \e \De|^{\min\{p',2\}}\leq C(|\S|^{p'} + \e |\De|^2+1),
\end{align*}
and combine these, we obtain
\begin{equation*}
|\Se|^{\min\{p',2\}} + |\De|^{\min\{p,2\}} \leq C(|\DD|^p + \e |\S|^2 +|\S|^{p'} + \e |\De|^2+1) \leq C(\Se : \De +1).
\end{equation*}
\end{proof}

The next lemma is of interest within the context of mathematical methods for general constitutive equations of the form $\cG(\S,\DD)=\0$ (associated with the graph $\A$) developed earlier for fluid flow problems, see~\cite{BGMS1,BGMS2,BGMS3,BMZ}. In these studies, the assumption on the existence of a Borel measurable selection played an important role both for constructing an approximating single-valued {mapping} (by convolution) and for showing that
\begin{equation}
  \textrm{ for each } \DD\in L^p \textrm{ there is } \S\in L^{p'} \textrm{ such that } (\S,\DD) \in \A. \label{star}
\end{equation}
In this study, we do not require the existence of a Borel measurable selection due to a different approximation scheme developed above in this section. For the sake of completeness, we also show that the property \eqref{star} is available.

%in fact a replacement of the assumption about the existence of a measurable selection. Indeed, assuming that the Borel measurable selection exists, the claim of the following lemma is straightforward. In our case, when we do not assume such property, we need to show that for any measurable $\DD$ there exists the corresponding measurable $\S$ such that $(\S,\DD)\in \A$.

\begin{lemma}\label{pidilema}
Let $p\in(1,\infty)$ and let $\A$ be a maximal monotone $p$-coercive graph. Then, for every $\DD \in L^p(Q; \R^{{N\times d}})$, there exists $\S \in L^{p'}(Q; \R^{{N\times d}})$ such that $(\S, \DD) \in \A$ almost everywhere in $Q$.
\end{lemma}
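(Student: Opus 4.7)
The plan is to combine the $\varepsilon$-approximation from Definition~\ref{construct} with the stability result of Lemma~\ref{aproxvA}. For each $\varepsilon \in (0,1)$, Lemma~\ref{Agraf} provides a single-valued Lipschitz mapping $\S_{\!\!\e}^* \colon \R^{N\times d} \to \R^{N\times d}$ whose graph equals $\Aee$. I would define $\Se(x,t) := \S_{\!\!\e}^*(\DD(x,t))$, which is measurable (by continuity of $\S_{\!\!\e}^*$) and satisfies $(\Se, \DD) \in \Aee$ almost everywhere in $Q$. Inverting the relations of Definition~\ref{construct}, I would then pass to the auxiliary pair $\bar{\S}^\varepsilon := \Se - \varepsilon \DD$ and $\bar{\DD}^\varepsilon := \DD - \varepsilon \bar{\S}^\varepsilon$, which lies in the original graph $\A$ almost everywhere and to which Lemma~\ref{aproxvA} applies directly.

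The key estimate is an $\varepsilon$-uniform $L^{p'}$-bound on $\bar{\S}^\varepsilon$. Starting from the $p$-coercivity (A4), namely $\bar{\S}^\varepsilon : \bar{\DD}^\varepsilon \geq C_1 |\bar{\S}^\varepsilon|^{p'} - C_2$, and rewriting the left-hand side as $\bar{\S}^\varepsilon : \DD - \varepsilon |\bar{\S}^\varepsilon|^2$, I would combine with the Young estimate $\bar{\S}^\varepsilon : \DD \leq \tfrac{C_1}{2} |\bar{\S}^\varepsilon|^{p'} + C |\DD|^p$ to obtain pointwise
\[
\tfrac{C_1}{2}|\bar{\S}^\varepsilon|^{p'} + \varepsilon |\bar{\S}^\varepsilon|^2 \leq C |\DD|^p + C_2,
\]
with the constant independent of $\varepsilon$. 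Integration over $Q$ then gives the uniform $L^{p'}$-bound on $\bar{\S}^\varepsilon$, and (A4) in turn produces a uniform $L^p$-bound on $\bar{\DD}^\varepsilon$.

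With these bounds in hand, Lemma~\ref{aproxvA} applied along any sequence $\varepsilon_n \to 0_+$ produces weak limits $\bar{\S}^{\varepsilon_n} \rightharpoonup \S$ in $L^{p'}$ and $\bar{\DD}^{\varepsilon_n} \rightharpoonup \widetilde{\DD}$ in $L^p$. Since $\bar{\DD}^{\varepsilon_n} - \DD = -\varepsilon_n \bar{\S}^{\varepsilon_n} \to \0$ in $L^{p'}$, I identify $\widetilde{\DD} = \DD$. To verify the limsup hypothesis of Lemma~\ref{aproxvA}, I would use $\bar{\S}^\varepsilon : \bar{\DD}^\varepsilon \geq 0$ (which follows from (A2) together with $(\0,\0) \in \A$) to obtain $\varepsilon_n \int_Q |\bar{\S}^{\varepsilon_n}|^2 \d x \d t \leq \int_Q \bar{\S}^{\varepsilon_n} : \DD \d x \d t$, whence
\[
\limsup_{n \to \infty} \int_Q \bar{\S}^{\varepsilon_n} : \bar{\DD}^{\varepsilon_n} \d x \d t \leq \lim_{n \to \infty} \int_Q \bar{\S}^{\varepsilon_n} : \DD \d x \d t = \int_Q \S : \DD \d x \d t,
\]
by weak convergence of $\bar{\S}^{\varepsilon_n}$ tested against the fixed $\DD \in L^p$. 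Lemma~\ref{aproxvA} then yields $(\S, \DD) \in \A$ almost everywhere in $Q$, with $\S \in L^{p'}$.

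The main obstacle I anticipate is the uniform $L^{p'}$-bound on $\bar{\S}^\varepsilon$: a direct bound on $|\Se|^{p'}$ via the Lipschitz property of $\S_{\!\!\e}^*$ would carry an $\varepsilon$-dependent constant, and the naive expansion $|\Se|^{p'} \leq C(|\bar{\S}^\varepsilon|^{p'} + \varepsilon^{p'}|\DD|^{p'})$ picks up a term that is not $L^1$-integrable when $p < 2$. Working with the pair $(\bar{\S}^\varepsilon, \bar{\DD}^\varepsilon) \in \A$ bypasses this issue cleanly, because the potentially dangerous $\varepsilon |\DD|^2$ contribution cancels in $\bar{\S}^\varepsilon : \bar{\DD}^\varepsilon = \bar{\S}^\varepsilon : \DD - \varepsilon |\bar{\S}^\varepsilon|^2$.
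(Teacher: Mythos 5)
Your proof is correct, and it is genuinely different from (and arguably cleaner than) the paper's. The paper first truncates $\DD_k := \DD\,\chi_{\{|\DD|\le k\}}$, derives an $\varepsilon$-uniform $L^\infty$-bound on $\S^*_{\!\!\e}(\DD_k)$ of the form $C\,k^{\max\{p-1,1\}}$, passes to the limit $\e\to 0_+$ to obtain $\S_k$ with $(\S_k,\DD_k)\in\A$, and only then lets $k\to\infty$ using $p$-coercivity to produce the uniform $L^{p'}$-bound on $\S_k$. Your argument avoids the truncation and the double limit entirely. The key observation that makes this work is the pointwise identity
\[
\bar{\S}^\varepsilon : \bar{\DD}^\varepsilon = \bar{\S}^\varepsilon : \DD - \varepsilon\,|\bar{\S}^\varepsilon|^2,
\]
combined with (A4) and Young's inequality, which isolates the $\e$-dependent term with the \emph{right} sign and gives the $\e$-uniform pointwise bound $\tfrac{C_1}{2}|\bar{\S}^\varepsilon|^{p'} + C_1 |\bar{\DD}^\varepsilon|^p + \varepsilon|\bar{\S}^\varepsilon|^2 \le C(|\DD|^p+1)$ directly integrable against $\DD\in L^p$. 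This is exactly the cancellation that, as you correctly observe, saves the argument when $p<2$ and the naive expansion $|\Se|^{p'}\lesssim |\bar{\S}^\varepsilon|^{p'}+\varepsilon^{p'}|\DD|^{p'}$ would introduce a non-integrable $|\DD|^{p'}$ term. The rest of your argument — identifying $\widetilde{\DD}=\DD$ via the strong $L^{p'}$-convergence $\varepsilon_n\bar{\S}^{\varepsilon_n}\to\0$ (tested against bounded test functions, say), verifying the limsup hypothesis of Lemma~\ref{aproxvA} by dropping $-\varepsilon_n\int_Q|\bar{\S}^{\varepsilon_n}|^2$, and invoking Lemma~\ref{aproxvA} — is sound. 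What the paper's version buys is the convenience of working with $L^\infty$-bounded approximants in the first limit, which is a familiar and robust device; what yours buys is a one-pass argument that makes transparent exactly which part of the coercivity compensates for the loss of integrability as $\e\to 0_+$.
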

\begin{proof}
For $k \in \N$, define $\DD_k := \DD \chi_{\{|\DD|\leq k\}}$. Recall the definition of $\Aee$ \eqref{Aee} and its selection $\S^*_{\!\!\e}$ \eqref{selection}. Then, by definition of selection, $(\S^*_{\!\!\e}(\DD_k), \DD_k) \in \Aee$ almost everywhere in $Q$. %We can estimate
%\begin{equation*}
%|\S^*_{\!\!\e}(\DD_k)| \leq C(|\DD_k|^2+|\DD_k|^p)\leq C(k^2 + k^p),
%\end{equation*}
{We can moreover apply Young's inequality to the left-hand side of~\eqref{odhadAr2} to obtain, for small $\delta>0$,
\begin{equation*}
|\S^*_{\!\!\e}(\DD_k)|^{\min\{p', 2\}}\leq C \S^*_{\!\!\e}(\DD_k) : \DD_k \leq C(\delta |\S^*_{\!\!\e}(\DD_k)|^{\min\{p', 2\}} + C(\delta)|\DD_k|^{\max\{p, 2\}}),
\end{equation*}
which implies that
\begin{equation*}
|\S^*_{\!\!\e}(\DD_k)| \leq C|\DD_k|^{\frac{\max\{p,2\}}{\min\{p',2\}}}\leq C |\DD_k|^{\max\{p-1,1\}} \leq C k^{\max\{p-1,1\}}.
\end{equation*}}
Then, there exists a $\S_k$ such that, as $\e \to 0_+$,
\begin{equation*}
\S^*_{\!\!\e}(\DD_k) \tow^* \S_k \text{ weakly$^*$ in } L^\infty(Q; \R^{{N\times d}}).
\end{equation*}
Then we have the limit
\begin{equation*}
\lim_{\e \to 0_+} \iq \S^*_{\!\!\e}(\DD_k): \DD_k \d x \d t = \iq \S_k : \DD_k \d x \d t,
\end{equation*}
and thanks to  Lemma~\ref{aproxvA} we know that $(\S_k, \DD_k) \in \A$ almost everywhere in $Q$. Therefore,
\begin{equation*}
C_1 (|\S_k|^{p'} + |\DD_k|^p) - C_2 \leq \S_k : \DD_k \leq \frac{C_1}{p'} |\S_k|^{p'} + C|\DD_k|^p,
\end{equation*}
and then
\begin{equation*}
\iq |\S_k|^{p'} + |\DD_k|^p \d x \d t \leq C \iq |\DD_k|^p \d x \d t \leq  \iq |\DD|^p \d x \d t \leq C,
\end{equation*}
where the boundedness follows from the assumption. Finally, as $k \to +\infty$, we have for subsequences that
\begin{align*}
\S_k &\tow \S &&\text{weakly in }  L^{p'}(Q; \R^{{N\times d}}), \\
\DD_k &\to \DD &&\text{strongly in } L^{p}(Q; \R^{{N\times d}}),
\end{align*}
so $\lim_{k \to \infty} \iq \S_k : \DD_k \d x \d t = \iq \S : \DD \d x \d t$, which finishes the proof by use of Lemma \ref{aproxvA}.
\end{proof}

\section{Proof of Theorem~\ref{result}}\label{proofs}
The proof is based on the identification of the null set of $\G$ with a maximal monotone $p$-coercive graph $\A$ and on its subsequent approximation by the Lipschitz continuous and uniformly monotone $2$-coercive graphs $\Aee$ constructed and analyzed in Section~\ref{graph-eps}. The solution of the problem is then obtained by a limiting process as $\e\to 0_+$. In order to link the original $p$-coercive graph with the approximating $2$-coercive graphs, we need to consider a smoother right-hand side $\f$. More precisely, we define
\begin{equation}\label{munu}
\begin{aligned}
&\mu := \min\{p,2\},  &&\mu' := \max\{p',2\}, \\
&\nu := \min\{p',2\}, &&\nu' := \max\{p,2\}.
\end{aligned}
\end{equation}
and then, in the first seven steps of the proof, we prove Theorem \ref{result} for $\f \in L^{\mu'}(0,T;V_{\mu}^*)$. In the final Step 8, once having a solution for such $\f$, we consider a sequence of solutions $\{(\u^m, \S^m)\}_{m \in \N}$ of the problem \eqref{problem}
in the sense of Theorem \ref{result} with the right-hand side $\{\f^m\}_{m \in \N} \subset L^{\mu'}(0,T;V_{\mu}^*)$ satisfying\footnote{Since $V^*_{\mu}$ is dense in $V_p^*$ for $\mu'\ge p'$, such sequence surely exists.} $\f^m \to \f$ in $L^{p'}(0,T;V_p^*)$ and we briefly comment why the weak limits $(\u,\S)$ of suitable subsequences $\{(\u^m, \S^m)\}_{m \in \N}$ solve the problem \eqref{problem} with the right-hand side $\f$.

\paragraph{{\bf Step 1.} \underline{Approximations}}
First, we introduce a graph $\A$ by
$$
\mathcal{A}:=\{(\S,\DD){;} \; \G(\S,\DD)=\0\}.
$$
Then, due to Lemma~\ref{GvsA}, it follows from the assumptions (G1)--(G4) that $\A$ is a maximal monotone $p$-coercive graph, i.e. $\A$ satisfies (A1)--(A4) in Definition~\ref{maxmongrafA}. Consequently, for an arbitrary $\e \in (0,1)$, we use {Definition~\ref{construct}} to construct $\e$-approximate graphs $\Aee$. Then, due to Lemma~\ref{Agraf}, we observe that $\Aee$ can be identified with a Lipschitz continuous and uniformly monotone single-valued mapping $\S^*_{\!\!\e}$ so that
$$
(\S,\DD)\in \Aee \Longleftrightarrow \S=\S^*_{\!\!\e}(\DD).
$$
Consequently, for every $\e \in (0,1)$, we can apply Lemma~\ref{S2thm} and find
\begin{equation*}
(\ve, \Se) \in \left(L^2(0,T; V) \cap \C([0,T];H)\right) \times L^2(Q; \R^{{N\times d}})
\end{equation*}
satisfying\footnote{{The term on the right-hand side of \eqref{WFS2e} can be also written as $\langle \f,  \vp \rangle_{V_{\mu}}$ due to the fact that $\mu \le 2$. }}
\begin{align}\label{WFS2e}
\langle \pt \ve,  \vp \rangle_V + \io \Se : \nabla \vp \d x &= \langle \f,  \vp \rangle_V \quad \textrm{ for a.a. } t\in (0,T] \textrm{ and for any } \vp \in V, \\
\label{pepa_cr2}\Se & =\S^*_{\!\!\e}(\nabla \ve) \quad \textrm{almost everywhere in } Q, \\
\label{ICS2e}
\lim_{t\to 0_+} \|\ve(t) - \u_0\|_H &= 0.
\end{align}

\paragraph{{\bf Step 2.} \underline{Uniform a~priori estimates}}

%Let $\e \in (0,1)$ be fixed. Recall the notation~\eqref{munu} and that
%\begin{equation*}
%(\ve, \Se) \in \left(L^2(0,T; V) \cap \C([0,T];H)\right) \times L^2(Q).
%\end{equation*}

We set $\vp:= \ve$ in \eqref{WFS2e}, integrate over $(0,t)$, use that $\pt \ve \in L^2(0,T;V^*)$ and properties of the Gelfand triple \eqref{gelfand}, and obtain
\begin{equation*}
\frac12 \|\ve(t)\|^2_H + \iqt \Se : \Dve \d x \d \tau = \it \langle \f,  \ve \rangle_{V_\mu} \d \tau + \frac12 \|\u_0\|_H^2.
\end{equation*}
Using the estimate \eqref{odhadAr2} from Lemma~\ref{lemaodhmin}, we get
\begin{equation}\label{wfve}
\begin{aligned}
\frac12 \|\ve(t)\|^2_H + \tilde{C_1} \iqt |\Se|^{\nu} + |\Dve|^{\mu} \d x \d \tau &\leq \frac12 \|\ve(t)\|^2_H + \iqt \Se : \Dve \d x \d \tau +C \\
&\leq \it \langle \f,  \ve \rangle_{V_\mu} \d \tau + \frac12 \|\u_0\|_H^2 +C.
\end{aligned}
\end{equation}
Next, recalling the definition of the $V_\mu$-norm and using Young's inequality, we get
\begin{equation*}
\begin{aligned}
\langle \f,  \ve \rangle_{V_\mu} &\leq \|\f\|_{V_\mu^*} \left(\|\ve\|_H + \|\Dve \|_{L^\mu(\o)}\right) \\
&\leq \frac{\tilde{C_1}}{2} \|\Dve \|_{L^\mu(\o)}^\mu + C\left(\|\f\|_{V_\mu^*}^{\mu'} + (\|\ve\|_H^2 +1) \|\f\|_{V_\mu^*} \right).
\end{aligned}
\end{equation*}
Inserting this into \eqref{wfve}, using the assumptions on the data $\f$ and $\u_0$ and applying then Gronwall's lemma, we get
\begin{equation}\label{venekH}
\sup_{t \in (0,T)} \|\ve(t)\|_H \leq C, \qquad\qquad~\text{uniformly with respect to } \e \in (0,1).
\end{equation}
Referring again to \eqref{wfve} we then also conclude that
\begin{equation}\label{unif}
\begin{aligned}
\sup_{t \in (0,T)} \|\ve(t)\|^2_H &+ \iq |\Se|^{\nu} + |\Dve|^{\mu} \d x \d \tau \leq C, \qquad\qquad~\text{uniformly with respect to } \e \in (0,1).
\end{aligned}
\end{equation}
Moreover, we also have
\begin{equation}\label{SeDveC}
\iq \Se : \Dve \d x \d \tau \leq C, \qquad\qquad ~\text{uniformly with respect to } \e \in (0,1).
\end{equation}
Finally, note that \eqref{unif} also implies that
\begin{equation}\label{vemumu}
\|\ve\|_{L^\mu(0,T;V_{\mu})} \leq C, \qquad\qquad ~\text{uniformly with respect to } \e \in (0,1).
\end{equation}
To estimate the time derivative, denote $\du := \{ \w \in V_p \cap V; \|\w\|_{V_{\nu'}} \leq 1\}$. Note that $\du \subset V$, then we can set $\vp := \w \in \du$ in the equation \eqref{WFS2} to get the following
\begin{align*}
\|\pt \ve\|_{V_{\nu'}^*} &= \sup_{\du} \langle \pt \ve, \w \rangle_{V_{\nu'}} = \sup_{\du} \left( -\io \Se : \nabla \w \d x + \langle \f,  \w \rangle_{V_{\mu}} \right) \\
&\leq \sup_{\du} \left( \|\Se\|_{L{^{\nu}}(\Omega; \R^{{N\times d}})} \|\nabla \w\|_{L{^{\nu'}}(\Omega; \R^{{N\times d}})} + \|\f\|_{V_{\mu}^*} \|\w\|_{V_{\mu}} \right).
\end{align*}
Using the fact that ${V_{\nu'} \hookrightarrow V_{\mu}}$, taking the $\nu$-th power and integrating the result over $(0,T)$ we obtain, using also \eqref{unif},
\begin{equation}\label{unifpt}
\begin{aligned}
\iT \|\pt \ve\|^\nu_{V_{\nu'}^*} \d t &\leq \iT \|\Se\|^\nu_{L{^{\nu}}(\Omega; \R^{{N\times d}})} + \|\f\|^\nu_{V_\mu^*} \d t \leq C, \qquad\qquad  ~\text{uniformly with respect to } \e \in (0,1).
\end{aligned}
\end{equation}

\paragraph{{\bf Step 3.} \underline{Limit $\e\to 0_+$}}

Using \eqref{vemumu}, \eqref{unif}, \eqref{venekH} and \eqref{unifpt}, we obtain that, as $\e \to 0_+$,
\begin{equation}\label{converg}
\begin{aligned}
\ve &\tow \u &&\text{weakly in } L^{\mu}(0,T;V_{\mu}), \\
\Se &\tow \S &&\text{weakly in } L^{\nu}(Q; \R^{{N\times d}}), \\
\ve &\tow^* \u &&\text{weakly$^*$ in } L^{\infty}(0,T;H), \\
\pt \ve &\tow \pt \u &&\text{weakly in } L^{\nu}(0,T;V_{\nu'}^*).
\end{aligned}
\end{equation}
Moreover, \eqref{SeDveC} in combination with the result of the Lemma \ref{Agraf} gives
\begin{equation}\label{SerDer}
\S \in L^{p'}(Q; \R^{{N\times d}}) ~\text{ and }~ \D \in L^p(Q; \R^{{N\times d}}).
\end{equation}
{The latter, in combination with $\u\in L^{\mu}(0,T;V_{\mu})$, implies that $\u\in L^p(0,T;V_p)$.}

Next, take $\w \in {V_{\nu'}(\hookrightarrow V_{\mu})}$ and $\xi \in L^\infty(0,T)$ arbitrary. Setting $\vp := \xi \w$ in \eqref{WFS2e}, integrating the result over $(0,T)$, we obtain
\begin{equation*}
\iT \langle \pt \ve,  \xi \w \rangle_{V_{\nu'}} \d t + \iq \Se : \nabla \w \xi \d x \d t = \iT \langle \f,\xi \w \rangle_{V_{\mu}} \d t.
\end{equation*}
Noticing that all terms are well-defined, we can take the limit $\e \to 0_+$ and, by means of \eqref{converg}, we end up with
\begin{equation*}
\iT \langle \pt \u,  \xi \w \rangle_{V_{\nu'}} \d t + \iq \S : \nabla \w \xi \d x \d t= \iT \langle \f,\xi \w \rangle_{V_{\mu}} \d t.
\end{equation*}
Since this holds for all $\xi$, it implies that
\begin{equation}\label{slabaw}
\langle \pt \u, \w \rangle_{V_{\nu'}} + \io \S : \nabla \w \d x = \langle \f,\w \rangle_{V_{\nu'}} \quad \textrm{ for a.a. } t\in(0,T) \textrm{ and for all } {\w \in V_{\mu}}.
\end{equation}
 To verify \eqref{WFSr}, we need to show that \eqref{slabaw} holds true for all $\w\in V_p$. For this purpose, we need to improve the information about the time derivative.

\paragraph{{\bf Step 4.} \underline{Improved information regarding $\pt \u$}}

Thanks to the dense embedding $V_p \cap V \hookrightarrow V_p$, we can use \eqref{slabaw} for $\mathcal{W}_p:= \{ \w \in V_p \cap V; \|\w\|_{V_p} \leq 1\}$ as follows:
\begin{align*}
\|\pt \u\|_{V_p^*} &= \sup_{\mathcal{W}_p} \, \langle \pt \u, \w \rangle_{V_p} = \sup_{\mathcal{W}_p} \left( -\io \S : \nabla \w \d x + \langle \f,  \w \rangle_{V_p} \right) \\
&\leq \sup_{\mathcal{W}_p} \left( \|\S\|_{L^{p'}(\Omega; \R^{{N\times d}})} \|\nabla \w\|_{L^{p}(\Omega; \R^{{N\times d}})} + \|\f\|_{V_p^*} \|\w\|_{V_p} \right) \leq \|\S\|_{L^{p'}(\Omega; \R^{{N\times d}})} + \|\f\|_{V_p^*}.
\end{align*}
Applying the power $p'$, integrating over time $t \in (0,T)$, and using the results in \eqref{converg}, we obtain
\begin{equation*}
\iT \|\pt \u\|^{p'}_{V_p^*} \d t \leq \iT \|\S\|^{p'}_{L^{p'}(\Omega; \R^{{N\times d}})} + \|\f\|^{p'}_{V_p^*}\d t \leq C,
\end{equation*}
and again using the density of $V_p \cap V \hookrightarrow V_p$, we conclude that \eqref{slabaw} is valid for any $\w \in V_p$ and for almost every $t \in(0,T)$.

Moreover, thanks to $\u \in L^p(0,T;V_p)$, $\pt \u \in L^{p'}(0,T;V_p^*)$, and the Gelfand triple \eqref{gelfand}, there holds $\u \in \C([0,T];H)$.

\paragraph{{\bf Step 5.} \underline{Attainment of the initial datum}}

For $0<\epsilon\ll 1$ and $t\in (0, T-\epsilon)$, we first introduce a cut-off function $\eta \in \C^{0,1}([0,T])$ as a piecewise linear function consisting of three pieces:
\begin{equation}\label{eta}
\eta(\tau) =
\begin{cases}
1 &\text{if } \tau \in [0,t),   \\
1 + \frac{t-\tau}{\epsilon} &\text{if } \tau \in [t,t+\epsilon),\\
0 &\text{if } \tau \in [t+\epsilon, T].
\end{cases}
\end{equation}
Next, for $\w \in {V_{\nu'}}$, we set $\vp:= \eta \w$ in \eqref{WFS2} and integrate over $(0,T)$ to deduce that
\begin{equation*}
\begin{aligned}
\frac{1}{\epsilon} \int_t^{t+\epsilon} (\ve(\tau) , \w )_H \d x \d \tau + \int_{Q_{t+\epsilon}} \Se:\nabla \w \eta \d x \d \tau = \int_0^{t+\epsilon} \langle \f,\w \eta \rangle_{V_{\nu'}} \d \tau + (\u_0 , \w )_H.
\end{aligned}
\end{equation*}
Letting $\e \to 0_+$ and using the results established in \eqref{converg}, we conclude that
\begin{equation*}
\begin{aligned}
\frac{1}{\epsilon} \int_t^{t+\epsilon} (\u(\tau) , \w )_H \d x \d \tau &+ \int_{Q_{t+\epsilon}} \S:\nabla \w \eta \d x \d \tau = \int_0^{t+\epsilon} \langle \f,\w \eta \rangle_{V_{\nu'}} \d \tau + (\u_0 , \w )_H.
\end{aligned}
\end{equation*}
Since $\u \in \C([0,T];H)$, we can also take the limit $\epsilon \to 0_+$; hence
\begin{equation*}
(\u(t) , \w )_H + \iqt \S:\nabla \w \d x \d \tau  = \it \langle \f,\w \rangle_{V_{\nu'}} \d \tau + (\u_0 , \w )_H,
\end{equation*}
and finally, we let $t \to 0_+$ to get
\begin{equation*}
\lim_{t \to 0_+}(\u(t) , \w )_H =(\u_0 , \w )_H.
\end{equation*}
As $\w \in V_p \cap V$ was arbitrary and  $V_p \cap V$ is dense in $H$, we obtain that $\u(t) \tow \u_0$ weakly in $H$, but thanks to the continuity of $\u$ in $H$ we obtain the strong convergence \eqref{ICSr}.

\paragraph{{\bf Step 6.} \underline{Attainment of the constitutive equation}} The aim is to show that $(\S,\nabla \u)\in \A$ almost everywhere in $Q$, which is equivalent to showing $\G(\S,\nabla \u)=\0$ almost everywhere in $Q$. To this end, we need to verify the assumption \eqref{assomez} of Lemma \ref{Agraf}, i.e. we need to prove that, for all $t \in (0,T)$,
\begin{equation}\label{goal1}
\limsup_{\e \to 0_+} \iqt \Se : \Dve \d x \d \tau \leq \iqt \S : \D \d x \d \tau.
\end{equation}
Indeed, having \eqref{goal1}, Lemma \ref{Agraf} implies that $(\S, \D) \in \A$ almost everywhere in $Q_t$ and that $\Se : \De \tow \S:\D$ weakly in $L^1(Q_t)$. Thus, we have obtained the desired result on $Q_t$ for every $t \in (0,T)$, and therefore also on $Q$.

The relation \eqref{goal1} is achieved by the standard energy and weak lower semicontinuity  techniques used in parabolic systems and for the sake of completeness, we provide the proof also here.
In \eqref{WFS2e}, we set $\vp=\ve$ and integrate the result over $(0,t)$ for $t\in(0,T)$. We obtain
\begin{equation*}
\iqt \Se : \Dve \d x \d \tau = \it \langle \f,  \ve \rangle_{V_{\mu}} \d \tau + \frac12 \|\u_0\|_H^2- \frac12 \|\ve(t)\|^2_H .
\end{equation*}
Applying then the limes superior as {$\e \to 0_+$} and using the weak convergence of $\ve$ in $L^{\mu}(0,T;V_{\mu})$ we conclude that
\begin{equation}\label{slabave}
\begin{aligned}
\limsup_{\e \to 0_+} &\iqt \Se : \Dve \d x \d \tau = \it \langle \f,  \u \rangle_{V_\mu} \d \tau + \frac12 \|\u_0\|_H^2- \frac12 \liminf_{\e \to 0_+} \|\ve(t)\|^2_H.
\end{aligned}
\end{equation}
On the other hand, setting $\w=\u$ in \eqref{slabaw} (we already have the appropriate duality pairings to do so) and integrating it over $(0,t)$ we arrive at
\begin{equation}\label{slabav}
\iqt \S : \D \d x \d \tau = \it \langle \f,\u \rangle_{V_\mu} \d \tau + \frac12 \|\u_0\|_H^2- \frac12 \|\u(t)\|^2_H.
\end{equation}
Subtracting \eqref{slabav} from \eqref{slabave} gives
\begin{equation}\label{almostthere}
\begin{aligned}
\limsup_{\e \to 0_+} \iqt \Se : \Dve \d x \d \tau = \frac12 \|\u(t)\|^2_H- \frac12 \liminf_{\e \to 0_+} \|\ve(t)\|^2_H + \iqt \S : \D \d x \d \tau.
\end{aligned}
\end{equation}
That is, to verify \eqref{goal1}, it remains to show that
\begin{equation}\label{remain}
\|\u(t)\|^2_H \leq \liminf_{\e \to 0_+} \|\ve(t)\|^2_H.
\end{equation}
In case $V_p$ is compactly embedded into $H$ (i.e. if $p>2d/(d+2)$), the above relation is for a.a. $t\in (0,T)$ a consequence of the convergence results \eqref{converg} and the Aubin--Lions compactness lemma. Therefore, if $p>2d/(d+2)$,  \eqref{goal1} holds for almost all time, which is sufficient for finishing the proof. Nevertheless, in case we do not have $V_p$ compactly embedded into $H$, we proceed slightly differently and moreover, we obtain \eqref{remain} for all $t\in (0,T)$ (instead of for almost all $t$).

Let $0<\delta  \ll T$ and take $\vp= \ve$ in \eqref{WFS2e}. Integrating the result over $(t, t+\delta)$ and applying then integration by parts to the first term, we obtain
\begin{equation*}
\begin{aligned}
\frac12 \|\ve(t+\delta)\|_H^2+ \int_t^{t+\delta} \io \Se : \Dve \d x \d \tau =\int_t^{t+\delta} \langle \f,  \ve \rangle_{V_\mu} \d \tau+ \frac12 \|\ve(t)\|_H^2.
\end{aligned}
\end{equation*}
As  $\Aee$ is monotone, we observe that $\Se : \Dve\ge 0$ and we neglect the corresponding term resulting in an inequality. Integrating it with respect to $\delta$ over  $(0,\gamma)$ for $0<\gamma \ll 1$, we arrive at
\begin{equation*}
\frac12 \int_0^\gamma \|\ve(t+\delta)\|_H^2 \d \delta - \int_0^\gamma \int_t^{t+\delta} \langle \f,  \ve \rangle_{V_\mu} \d \tau \d \delta \leq \frac{\gamma}{2}  \|\ve(t)\|_H^2.
\end{equation*}
Taking the limes inferior as $\e \to 0_+$ and using, on the left-hand side, the established weak convergence for $\ve$ and the weak lower semicontinuity of the norm, followed by multiplication of the resulting  inequality by $\frac{2}{\gamma}$, we get
\begin{equation*}
\frac{1}{\gamma} \int_0^\gamma \|\u(t+\delta)\|_H^2 \d \delta - \frac{2}{\gamma} \int_0^\gamma \int_t^{t+\delta} \langle \f,  \u \rangle_{V_\mu} \d \tau \d \delta \leq \liminf_{\e \to 0_+} \|\ve(t)\|_H^2.
\end{equation*}
Finally, letting $\gamma \to 0_+$, using the continuity of $\u$ in $H$ and the fact that the duality between $\f$ and $\u$ is well-defined, we obtain \eqref{remain}.

\paragraph{{\bf Step 7.} \underline{Uniqueness of $\u$}}

Let $(\u_1, \S_1)$ and $(\u_2, \S_2)$ be two solutions to the problem \eqref{problem}. If we subtract their weak formulations, we obtain
\begin{equation*}
\langle \pt (\u_1 - \u_2),  \vp \rangle_{V_p} + \io (\S_1 - \S_2) : \nabla\vp \d x = 0.
\end{equation*}
Next, we set $\vp := (\u_1 - \u_2)$ to get
\begin{equation*}
\frac12 \dt \|\u_1 - \u_2\|^2_{V_p} + \io (\S_1 - \S_2) : (\D_1 - \D_2) \d x = 0,
\end{equation*}
however, due to the monotonicity of the graph $\A$, we obtain that each term is equal to zero. Finally, after integration over time $(0,t)$ for every $t \in (0,T)$, we use that both solutions satisfy the same initial condition and conclude that $\u_1(t) = \u_2(t)$ in $V_p$ for every $t \in (0,T)$.

\paragraph{{\bf Step 8.} \underline{Sketch of the proof of Theorem \ref{result} for $f\in L^{p'}(0,T; V_p^*)$}}

Since $V^*_{\mu}$ is dense in $V_p^*$ for $\mu'\ge p'$, for a given $f\in L^{p'}(0,T; V_p^*)$ there exists a sequence $\{\f^m\}_{m \in \N} \subset L^{\mu'}(0,T;V_{\mu}^*)$ satisfying
$$
\f^m \to \f \textrm{ in } L^{p'}(0,T;V_p^*).
$$
For each $m\in\N$ we consider a solution $(\u^m, \S^m)$ of the problem~\eqref{problem}
in the sense of Theorem~\ref{result} with the right-hand side~$\f^m$. Then, we proceed as in Steps~2--7, i.e. we derive uniform estimates for $\{(\u^m, \S^m)\}_{m \in \N}$, find appropriate weak limits~$(\u,\S)$, and study the limit as~$m\to \infty$. This is all done in the same way as (or in a slightly simpler way than) above. In particular, we use Lemma~\ref{aproxvA} for the verification that~the couple $(\S, \nabla \u)$ belongs to~$\A$. Note that~$\A$ remains unchanged throughout this step.

\begin{appendix}

\section{Prototypical examples}\label{App1}

With the aim to clarify the conditions (g1)--(g4) formulated in the introductory section and to fix the notation involved in their descriptions, we consider five examples of the implicit constitutive equations $\g(\j,\dd)=\0$ and show that they satisfy (g1)--(g4).

\begin{example}\label{Ex1}
The linear case $\j= \dd$, i.e.,
$$
\g(\j,\dd)=\j-\dd.
$$
\end{example}
\begin{proof}[Validity of (g1)--(g4) for Example~\ref{Ex1}]

To show that Example~\ref{Ex1} satisfies (g1)--(g4), we first notice that $\g(\0,\0)=\0$, $\g_{\j}(\j,\dd)=\I$, $\g_{\dd}(\j,\dd)=-\I$, $\g_{\j}(\j,\dd) - \g_{\dd}(\j,\dd)= 2\I$ and, by a simple computation,  $\g_{\dd}(\j,\dd) (\g_{\j}(\j,\dd))^T=-\I$, and therefore (g1) and~(g2) obviously  hold. Furthermore,
$$
\g(\j,\dd) \cdot \j = |\j|^2 - \j\cdot\dd \quad \textrm{ and } \quad \g(\j,\dd) \cdot \dd = \j\cdot\dd - |\dd|^2.
$$
Consequently, for a fixed $\dd\in\R^d$,
$$
\lim_{|\j|\to \infty} \g(\j,\dd)\cdot \j = \infty
$$
and, for a fixed $\j\in\R^d$,
$$
\lim_{|\dd|\to \infty} \g(\j,\dd)\cdot \dd = - \infty,
$$
which proves (g3). Finally,
$$
  \j\cdot\dd = \frac12(\j\cdot\dd + \j\cdot\dd) = \frac12|\j|^2 + \frac12|\dd|^2,
$$
where, in the last equality, we inserted first $\dd$ for $\j$ and then $\j$ for $\dd$ (using $\0 = \g(\j,\dd) = \j-\dd$). Hence, (g4) holds.

Note that setting $\g(\j,\dd) = \dd-\j$ leads to sign changes in the all identities in (g2) and (g3) except the last identity in (g2), which remains unchanged.

\end{proof}

\begin{example}\label{Ex2}
We consider $\dd = (1+ |\j|^2)^{\frac{p'-2}{2}}\j$ with $p'=p/(p-1)$, $p\in (1, \infty)$. This means that
$$
\g(\j,\dd)=(1+ |\j|^2)^{\frac{p'-2}{2}}\j-\dd.
$$
\end{example}
\begin{proof}[Validity of (g1)--(g4) for Example~\ref{Ex2}]
Clearly, $\g(\0,\0)=\0$, $\g_{\dd}(\j,\dd)=-\I$ and
$$
\g_{\j}(\j,\dd)=(1+ |\j|^2)^{\frac{p'-2}{2}} \I + (p'-2)(1+ |\j|^2)^{\frac{p'-4}{2}}\j\otimes\j,
$$
where $(\j\otimes\j)_{k\ell}:=j_k j_\ell$. Hence, for all $\x\in\R^d$, one has
\begin{align*}
\g_{\j}(\j,\dd)\x \cdot \x &= (1+ |\j|^2)^{\frac{p'-4}{2}}\left((1+ |\j|^2) |\x|^2 + (p'-2)(\j\cdot\x)^2\right) \\
&\ge
\begin{cases} (1+ |\j|^2)^{\frac{p'-2}{2}}|\x|^2 & \quad \textrm{ for } p'\ge 2, \\
(p'-1)(1+ |\j|^2)^{\frac{p'-2}{2}}|\x|^2 & \quad \textrm{ for } p'\in (1,2).
\end{cases}
\end{align*}
Hence $\g_{\j}(\j,\dd)>0$. Consequently, $\g_{\j}(\j,\dd) - \g_{\dd}(\j,\dd) >0$ and $\g_{\dd}(\j,\dd) (\g_{\j}(\j,\dd))^T<0$ and the validity of (g1) and (g2) is verified. Furthermore, for any $\dd\in\R^d$, recalling that $p'>1$, we obtain that
$$
\g(\j,\dd) \cdot \j = (1+ |\j|^2)^{\frac{p'-2}{2}} |\j|^2 - \j\cdot\dd \to \infty \quad \textrm{ as } |\j|\to \infty.
$$
Similarly, for any $\j\in\R^d$,
$$
\g(\j,\dd) \cdot \dd = (1+ |\j|^2)^{\frac{p'-2}{2}} \j\cdot\dd  - |\dd|^2 \to - \infty \quad \textrm{ as } |\dd|\to \infty,
$$
and (g3) holds. Finally,
\begin{align*}
  \j\cdot\dd = (1+|\j|^2)^{\frac{p'-2}{2}} |\j|^2 \ge
	\begin{cases} |\j|^{p'-2}|\j|^2 = |\j|^{p'} & \quad\textrm{ if } p'\ge 2, \\
	2^{\frac{p'-2}{2}} |\j|^{p'-2}|\j|^2 = 2^{\frac{p'-2}{2}} |\j|^{p'} & \quad\textrm{ if } p'\in (1,2) \textrm{ and } |\j|\ge 1, \\
	2^{\frac{p'-2}{2}} |\j|^2 \ge 2^{\frac{p'-2}{2}} |\j|^{p'} - c_0 & \quad\textrm{ if } p'\in (1,2) \textrm{ and } |\j| < 1,
	\end{cases}
\end{align*}
where, in the last step, we used Young's inequality $|\j|^{p'} \le |\j|^2 + c$. Since
$$
|\dd|^2 = (1+|\j|^2){^{p'-2}} |\j|^2 \le (1+|\j|^2)^{p'-1} \quad\implies \quad 1+|\j|^2 \ge |\dd|^{\frac{2}{p'-1}},
$$
we observe that
\begin{align*}
\j\cdot\dd &= (1+|\j|^2)^{\frac{p'-2}{2}} |\j|^2 = (1+|\j|^2)^{\frac{p'-2}{2}} (1+ |\j|^2 -1) \\
           &= (1+|\j|^2)^{\frac{p'}{2}} - (1+|\j|^2)^{\frac{p'-2}{2}} \\
					 &\ge \begin{cases}
					\frac12 |\dd|^p - c & \quad \textrm{ if } p'\ge 2,\\
					|\dd|^p - 1 & \quad \textrm{ if } p'\in (1,2)\,,
					\end{cases}
\end{align*}
where in the last step we used Young's inequality $(1+|\j|^2)^{\frac{p'-2}{2}} \le \frac12(1+|\j|^2)^{\frac{p'}{2}} + c$ for $p'\ge 2$ and the fact that
$(1+|\j|^2)^{\frac{p'-2}{2}}\le 1$ for $p'\in (1, 2{)}$. The last two formulae imply (g4).
\end{proof}

\begin{example}\label{Ex3}
For $\dd = (|\j|-\sigma_*)^+\frac{\j}{|\j|}$,  we set\footnote{Here and in what follows we tacitly assume a continuous extension at $\j=\0$, namely $\dd=\0$ for $\j=\0$.}
$$
  \g(\j,\dd) = (|\j|-\sigma_*)^+\frac{\j}{|\j|} -\dd.
$$
\end{example}
\begin{proof}[Validity of (g1)--(g4) for Example~\ref{Ex3}]
Clearly, $\g$ is continuous on $\R^d\times\R^d$, Lipschitz continuous almost everywhere in $\R^{d}\times\R^d$, $\g(\0,\0)=\0$, $\g_{\dd}(\j,\dd) = -\I$ and
$$
  \g_{\j}(\j,\dd) = \frac{(|\j|-\sigma_*)^+}{|\j|}\I + \chi_{\{|\j|> \sigma_*\}} \frac{\j\otimes\j}{|\j|^2} - (|\j|-\sigma_*)^+\frac{\j\otimes\j}{|\j|^3},
$$
where $\chi_{U}$ denotes the characteristic function of $U\subset \R^{d}$. The last identity leads to
$$
  \g_{\j}(\j,\dd)\x\cdot\x \ge \frac{(|\j|-\sigma_*)^+}{|\j|} \left( |\x|^2 - \frac{(\j\cdot\x)^2}{|\j|^2} \right) + \chi_{\{|\j|> \sigma_*\}} \frac{(\j\cdot\x)^2}{|\j|^2} \ge 0.
$$
The above observations imply that $\g_{\j}(\j,\dd)\ge 0$, $\g_{\j}(\j,\dd) - \g_{\dd}(\j,\dd) > 0$ and $\g_{\dd}(\j,\dd)(\g_{\j}(\j,\dd))^T
\ge 0$. Hence, (g1) and (g2) hold.

Next, it is easy to deduce that
\begin{align*}
\g(\j,\dd)\cdot \j &= (|\j| - \sigma_*)^{+}|\j| - \dd\cdot\j \to \infty &&\textrm{ as } |\j|\to \infty, \\
\g(\j,\dd)\cdot\dd &= (|\j| - \sigma_*)^{+}\frac{\j\cdot\dd}{|\j|} - |\dd|^2 \to - \infty  &&\textrm{ as } |\dd|\to \infty
\end{align*}
and consequently, (g3) follows. Finally, we observe that
\begin{align*}
  \j\cdot\dd = (|\j| - \sigma_*)^{+} |\j| \ge
	\begin{cases} 0 \ge |\j|^2 - \sigma_*^2 & \quad\textrm{ if } |\j|\le \sigma_*, \\
	|\j|^2 - \sigma_*|\j| \ge \frac12 |\j|^2 - c & \quad\textrm{ if } |\j| > \sigma_*.
	\end{cases}
\end{align*}
Since $|\dd| = (|\j| - \sigma_*)^{+}$ and consequently $\dd=\0$ if $|\j|\le \sigma_*$ and $|\j| = |\dd|+ \sigma_*$ is $|\j|\ge \sigma_*$, we also get
\begin{align*}
  \j\cdot\dd = (|\j| - \sigma_*)^{+} |\j| \ge
	\begin{cases} 0 = |\dd|^2 & \quad\textrm{ if } |\j|\le \sigma_*, \\
	|\dd|\left( |\dd| + \sigma_*\right) \ge |\dd|^2 - c_0 & \quad\textrm{ if } |\j| > \sigma_*.
	\end{cases}
\end{align*}
This proves (g4).
\end{proof}

{We end this part by studying the models depicted in Figure~\ref{figure1}.}
%\begin{figure}[h]
%    \centering
%    \includegraphics[width=0.4\linewidth]{graph.eps}
%\caption{Graph corresponding to Example~\ref{ExSchod}.}
%\label{fmb}
%\end{figure}

\begin{example}\label{ExSchod}
For $a:[0,\infty]\to [0,1]$ defined through
\begin{equation}\label{dfsc}
a(x):=\left\{ \begin{aligned}
&1 &&\textrm{for } x\in[0,\sqrt{2}/2],\\
&\frac{\sqrt{2}-x}{x} &&\textrm{for } x\in (\sqrt{2}/2, \sqrt{2}),\\
&0 &&\textrm{for } x\ge \sqrt{2},
\end{aligned}
\right.
\end{equation}
we set
\begin{equation}\label{schod}
\g(\j,\dd):= \j -\dd -a\left(\frac{\sqrt{2}|\j+\dd|}{2}\right)(\j+\dd).
\end{equation}
Then the null points of $\g$ describes the right graph drawn in Figure~\ref{figure1}, i.e.,
\begin{equation}\label{explschod}
%\begin{aligned}
|\j|\le 1 \quad \textrm{if } \dd = \0  \qquad\textrm{ and } \qquad \j=\max\{1,|\dd|^{-1}\} \dd \quad \textrm{if } \dd\neq \0.
%\end{aligned}
\end{equation}
In addition, $\g$ satisfies the assumptions (g1)-(g4) with $p=2$.
\end{example}

\begin{proof}[Verification of \eqref{explschod}]
Consider first $\dd=\0$. Then it follows from \eqref{schod} and $\g(\j,\0)=\0$ that
$$
\j =a\left(\frac{\sqrt{2}|\j|}{2}\right)\j
$$
Hence, either $\j=\0$ or
$$
a\left(\frac{\sqrt{2}|\j|}{2}\right)=1.
$$
It however follows from the definition of $a$, see \eqref{dfsc}, that the second option is possible if and only if $|\j|\le 1$.

Next, let $\dd\neq \0$. Then it follows from \eqref{schod} that
\begin{equation}\label{pepa77}
\g(\j,\dd) = \0 ~\Longleftrightarrow~ \left( 1 - a \left(\frac{\sqrt{2}|\j+\dd|}{2}\right)\right)\j = \left( 1+ a \left(\frac{\sqrt{2}|\j+\dd|}{2}\right)\right)\dd,
\end{equation}
and, as $\dd\neq\0$, $\left( 1 - a \left(\frac{\sqrt{2}|\j+\dd|}{2}\right)\right)$ cannot be zero, which means that $|\j+\dd|>1$. Hence, the null points of $\g$ satisfy
$$
\j=b\dd.
$$
The goal is to determine $b$. First, we observe from \eqref{pepa77} and the definition of $a$ that $\j = \dd$ (and thus $b=1$) if $|\j+\dd|\ge 2$.
It remains to show that $b=|\dd|^{-1}$ if $1<|\j+\dd|< 2$. Inserting $\j=b\dd$ into \eqref{schod} we obtain
$$
(b-1)\dd =(1+b)a\left(\frac{\sqrt{2}(1+b)|\dd|}{2}\right)\dd
$$
and consequently
$$
(b-1) =(1+b)a\left(\frac{\sqrt{2}(1+b)|\dd|}{2}\right).
$$
In order to use the fact that $a(x) x = \sqrt{2} - x$ for $x\in (\sqrt{2}/2,\sqrt{2})$, we multiply the last equality by $\tfrac{\sqrt{2}}{2} |\dd|$ and conclude that
$$
   \frac{\sqrt{2}}{2}(b-1) |\dd| = \sqrt{2} - \frac{\sqrt{2}}{2}(1+b) |\dd|,
$$
which gives $b=|\dd|^{-1}$.

\noindent\emph{Validity of (g1)--(g4) for Example~\ref{ExSchod}.}
Obviously, (g1) holds. Next, taking the scalar product of $\g(\j,\dd) = \0$ first by $\dd$ and then by $-\j$ and summing the results, we obtain
\begin{equation*}
\begin{split}
  2\j\cdot\dd & = |\j|^2 + |\dd|^2 - a(\dots) |\j|^2 + a(\dots) |\dd|^2 \\
	&\ge |\j|^2 + |\dd|^2 - a(\dots) |\j|^2 \\
	&\ge \begin{cases} |\j|^2 + |\dd|^2 & \textrm{ if } |\j+\dd| >2, \\
	|\j|^2 + |\dd|^2 - C & \textrm{ if } |\j+\dd| \le 2,\end{cases}
\end{split}
\end{equation*}
which gives (g4) with $p=2$, and also (g3). It remains to show the validity of (g2). Note that
\begin{align*}
\g_{\j}(\j,\dd) &= \left(1-a\left(\frac{\sqrt{2}|\j+\dd|}{2}\right)\right)\I -a'\left(\frac{\sqrt{2}|\j+\dd|}{2}\right)\frac{\sqrt{2}}{2}\frac{(\j+\dd)\otimes (\j+\dd)}{|\j+\dd|} \\
\g_{\dd}(\j,\dd) & = -\left(1+a\left(\frac{\sqrt{2}|\j+\dd|}{2}\right)\right)\I -a'\left(\frac{\sqrt{2}|\j+\dd|}{2}\right)\frac{\sqrt{2}}{2}\frac{(\j+\dd)\otimes (\j+\dd)}{|\j+\dd|}
\end{align*}
Then, by using the definition of $a$, we observe that for arbitrary $\x\in\R^d$
\begin{align*}
\g_{\j}(\j,\dd)\x &\cdot\x=\left\{
\begin{aligned}
&0&&\textrm{for } |\j+\dd|\le 1,\\
&|\x|^2 &&\textrm{for } |\j+\dd|\ge 2,\\
&\left(2-\frac{2}{|\j+\dd|}\right)|\x|^2+2\frac{((\j+\dd)\cdot\x)^2}{|\j+\dd|^3} &&\textrm{for } |\j+\dd|\in (1,2)\\
\end{aligned}
\right. \\
%\end{align*}
%and
%\begin{align*}
\g_{\dd}(\j,\dd)\x &\cdot\x=\left\{
\begin{aligned}
&-2|\x|^2  &&\textrm{for } |\j+\dd|\le 1,\\
&-|\x|^2 &&\textrm{for } |\j+\dd|\ge 2,\\
&-\frac{2|\x|^2}{|\j+\dd|} +2\frac{({(\j+\dd)\cdot\x})^2}{|\j+\dd|^3} &&\textrm{for } |\j+\dd|\in (1,2)\\
\end{aligned}
\right.
\end{align*}
Thus, $\g_{\j}(\j,\dd)\ge 0$ and $\g_{\dd}(\j,\dd)\le 0$. In addition also $\g_{\j}(\j,\dd)-\g_{\dd}(\j,\dd)>0$. Finally, as $\g_{\j}$ and $\g_{\dd}$ are symmetric, it follows directly from the nonnegativity of $\g_{\j}$ and the nonpositivity of $\g_{\dd}$ that
$$
\g_{\j}(\g_{\dd})^T = \g_{\j}\g_{\dd}\le 0.
$$
Thus (g2) holds.
\end{proof}

\begin{example}\label{Exsmykala}
Let  $b:[0,\infty]\to \mathbb{R}$ be $\sqrt{2}$-periodic and satisfy
$$
b(x):= xa(x) \quad \textrm{for } x\in [0,\sqrt{2}],
$$
where $a$ is defined in~\eqref{dfsc}. Defining
\begin{equation}\label{dfsc2}
\tilde{a}(x):=\left\{
\begin{aligned}
&0  &&\textrm{for } x=0,\\
&\frac{b(x)}{x} &&\textrm{for } x\in (0,\infty),
\end{aligned}
\right.
\end{equation}
we set
\begin{equation}\label{smykala}
\g(\j,\dd):= \j -\dd -\tilde{a}\left(\frac{\sqrt{2}|\j+\dd|}{2}\right)(\j+\dd).
\end{equation}
Then the null points of $\g$ describe the graph drawn on the left in Figure~\ref{figure1}.
In addition, $\g$ satisfies the assumptions (g1)-(g4) with $p=2$.
\end{example}
We do not verify {the} validity of (g1)--(g4) for the graph described by the null points of $\g$ defined in \eqref{smykala} as the proof is almost identical to the proof in Example~\ref{ExSchod}.

\section{The Maxwell--Stefan system}\label{App2}
Here, we consider the Maxwell--Stefan system given by \eqref{m-s1*}. We omit the dependence of parameters on the solution itself and we just focus on the proof of the fulfillment of (G1)--(G3).
\begin{example}\label{Ex5}
For $d,N\in \mathbb{N}$, $N\ge 2$, consider
$$
(\G(\S,\DD))_{\nu i} = \sum_{\mu=1}^{N}\left(\mathbb{A}_{\nu \mu} ({u} _{\mu}\S_{\nu i}-{u} _{\nu}\S_{\mu i})\right) - \DD_{\nu i}, \qquad i=1,\ldots, d; \ \nu=1,\ldots, N,
$$
where $\mathbb{A}$ is a given symmetric  matrix in $\mathbb{R}^{N\times N}$ fulfilling $\mathbb{A}_{\nu\mu}>0$ for $\nu,\mu=1,\ldots, N$ and $\{{u} _{\nu}\}_{\nu=1}^{N}$ fulfill
$$
{u} _{\nu} \in (0,1) \textrm{ for all } \nu=1,\ldots, N \qquad \textrm{ and } \qquad \sum_{\nu=1}^{N} {u} _\nu = 1.
$$
\end{example}

\begin{proof}[Validity of (G1)--(G3)]
We can evaluate
\begin{align*}
\frac{\partial (\G(\S,\DD))_{\nu i}}{\partial \S_{\mu j}}&= \delta_{ij}\left(\delta_{\nu \mu} \left(\sum_{\alpha=1}^{N}\mathbb{A}_{\nu \alpha} {u} _{\alpha}\right) -\mathbb{A}_{\nu \mu} {u} _{\nu}\right),\\
\frac{\partial (\G(\S,\DD))_{\nu i}}{\partial \DD_{\mu j}}&= -\delta_{ij} \delta_{\nu \mu}.
\end{align*}
Then, for an arbitrary $\BB\in \R^{N\times d}$, we have
\begin{align*}
\sum_{\nu,\mu=1}^{N}\sum_{i,j=1}^d\frac{\partial (\G(\S,\DD))_{\nu i}}{\partial \DD_{\mu j}}\BB_{\nu i}\BB_{\mu j}&= -|\BB|^2\le 0,\\
\sum_{\nu,\mu=1}^{N}\sum_{i,j=1}^d\frac{\partial (\G(\S,\DD))_{\nu i}}{\partial \S_{\mu j}}\BB_{\nu i}\BB_{\mu j}&= \sum_{\nu,\mu=1}^{N}\sum_{i=1}^d\BB_{\nu i}\BB_{\nu i}\mathbb{A}_{\nu \mu} {u} _{\mu}-\sum_{\nu,\mu=1}^{N}\sum_{i=1}^d\BB_{\nu i}\BB_{\mu i}\mathbb{A}_{\nu \mu} {u} _{\nu}.
\end{align*}
While the first inequality is exactly of the form we want, we focus on the second inequality. First, we can observe that the second identity can be rewritten in the form
\begin{align*}
\sum_{\nu,\mu=1}^{N}\sum_{i,j=1}^d\frac{\partial (\G(\S,\DD))_{\nu i}}{\partial \S_{\mu j}}\BB_{\nu i}\BB_{\mu j}&= \sum_{i=1}^d\sum_{\nu,\mu=1}^{N}\BB_{\nu i}\BB_{\mu i} \mathbb{B}_{\nu \mu},
\end{align*}
where $\mathbb{B}$ is a  matrix given as
$$
\mathbb{B}_{\nu \mu}:= \left\{\begin{aligned}&\sum_{\alpha=1; \,\alpha\neq \nu}^N\mathbb{A}_{\nu \alpha} {u} _{\alpha} &&\textrm{for }\nu=\mu,\\
&-\mathbb{A}_{\nu\mu}{u} _{\nu} &&\textrm{for }\nu\neq\mu.
\end{aligned} \right.
$$
Next, we can use~\cite[Lemma 2.1]{jungel}, where it is shown that the spectrum of $\mathbb{B}$ is nonnegative (but contains the simple eigenvalue $0$) and consequently it follows that $\frac{\partial (\G(\S,\DD))}{\partial \S}\ge 0$. Hence, we see that $\G$ satisfies (G1) and (G2). Also it is evident that it satisfies (G3)$_2$.  However, since the spectrum of $\mathbb{B}$ also contains $0$ it cannot satisfy (G4). Nevertheless, since for all null points we have that (note that all null points must satisfy $\sum_{\nu} \DD_{\nu}=0$)
$$
\S :\DD = \mathbb{B} \S : \S,
$$
it follows from the positivity of the spectrum of $\mathbb{B}$, except simple eigenvalue zero, that for all $\S$ satisfying $\sum_{\mu} \S_{\mu}=0$ there holds
$$
 \mathbb{B} \S : \S\ge c|\mathbb{B}\S|^2,
$$
and consequently also,
$$
\S :\DD\ge c|\DD|^2.
$$
Hence, (G4) with $p=2$ is fulfilled on the range of $\mathbb{B}$, as also used in the analysis, see \cite{jungel}.

\end{proof}

\section{Solvability of \texorpdfstring{\eqref{problem}}{1.6} for Lipschitz continuous and uniformly monotone graphs}\label{App3}

Here, we consider the following problem: for given $\o\subset \mathbb{R}^d$, $T>0$, $\f:Q\to \R^N$ and $\u{_0}:\o\to\R^N$, find $(\u,\S):Q\to \R^N\times\R^{N\times d}$ satisfying
\begin{subequations}\label{S2problem}
\begin{align}
\pt \u- \diver \S &= \f &&\text{in } Q, \label{S2a}\\
\S &= \S^*(\nabla \u)&&\text{in } Q,\label{S2f}\\
\u&=\0 &&\text{on } \Sigma_D,\label{S2d}\\
\S \n&=\0 &&\text{on } \Sigma_N, \label{S2d2}\\
\u(0, \cdot)&=\u_0  &&\text{in } \o,\label{S2e}
%(\S,\nabla \u) &\in \A &&\text{in } Q,\label{S2f}
\end{align}
\end{subequations}
where $\S^*:\R^{{N\times d}} \to \R^{{N\times d}}$ is a Lipschitz continuous and uniformly monotone single-valued mapping, which means that there are $C_1$, $C_2>0$ such that, for all $\DD_1$, $\DD_2\in \R^{{N\times d}}$,
\begin{equation}
\begin{split}
|\S^*(\DD_1)-\S^*(\DD_2)|&\le C_2|\DD_1-\DD_2|,\\
(\S^*(\DD_1)-\S^*(\DD_2)): (\DD_1-\DD_2)&\ge C_1|\DD_1-\DD_2|^2,\\
\S^*(\0)&=\0.
\end{split}
\label{defmono}
\end{equation}
Note that taking $\DD_2= \0$ and relabelling $\DD_1$ by $\DD$ in \eqref{defmono} we obtain
\begin{equation}
\S^*(\DD):\DD \ge \frac{C_1}{2}|\DD|^2 + \frac{C_1}{2C_2^2} |\S^*(\DD)|^2 \ge C \left(|\DD|^2 + |\S|^2\right),
\label{pepa_99}
\end{equation}
where we set $\S = \S^*(\DD)$ and $C:=\min\{C_1/2, C_1/(2C_2^2)\}$.
Consequently, the graph $\A$ defined through the relation
\begin{equation}\label{selectA}
(\S, \nabla \u) \in \A ~\Longleftrightarrow~ \S = \S^*(\nabla \u)
\end{equation}
is a Lipschitz continuous and uniformly monotone $2$-coercive graph.

By the Faedo-Galerkin method, we establish the following well-posedness result.
\begin{lemma}\label{S2thm}
Let $\o \subset \R^d$ be a Lipschitz domain, $T>0$, $\f\in L^2(0,T;V^*)$, $\u_0 \in H$ and $\S^*$ satisfy \eqref{defmono}. Then, there exists a unique  couple $(\u, \S)$ such that
\begin{align*}
\u &\in  L^2(0, T; V) \cap  \C([0, T]; H), \\
\pt \u &\in L^2(0, T; V^*),\\
\S &\in L^2(Q; \R^{{N\times d}}),
\end{align*}
satisfying
\begin{subequations}
\begin{align}\label{WFS2}
\langle \pt \u,  \vp \rangle_V+ \io \S : \nabla \vp \d x &= \langle \f,  \vp \rangle_V \qquad \textrm{ for a.a. } t\in (0,T) \textrm{   and for all } \vp\in V, \\ \label{pepa_100}
\S&=\S^*(\nabla \u) \qquad \textrm{ almost everywhere in } Q, \\ \label{ICS2}
\lim_{t\to 0_+} \|\u(t) - \u_0\|_H &= 0.
\end{align}
\end{subequations}
\end{lemma}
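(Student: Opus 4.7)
The plan is to construct $(\u,\S)$ by the Faedo--Galerkin method. I would fix a Schauder basis $\{\w_k\}_{k\in\mathbb{N}}$ of $V$ that is orthonormal in $H$ and orthogonal in $V$ (provided by standard spectral theory), set $V_n:=\lin\{\w_1,\dots,\w_n\}$, and look for $\u^n(t):=\sum_{k=1}^{n} c^n_k(t)\w_k$ with $c^n_k(0)=(\u_0,\w_k)_H$ satisfying
\begin{equation*}
(\pt \u^n,\w_k)_H + \io \S^*(\nabla \u^n):\nabla \w_k \d x = \langle \f,\w_k\rangle_V, \qquad k=1,\dots,n.
\end{equation*}
By the Lipschitz continuity of $\S^*$ from~\eqref{defmono}, this is an ODE system with a Carath\'{e}odory, state-Lipschitz right-hand side, so standard theory delivers a unique absolutely continuous local solution, extended globally by the a~priori estimates below.

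Testing with $\u^n$ and using~\eqref{pepa_99}, Young's inequality, and Gronwall's lemma yields uniform bounds of $\u^n$ in $L^{\infty}(0,T;H)\cap L^2(0,T;V)$ and of $\S^n:=\S^*(\nabla \u^n)$ in $L^2(Q;\mathbb{R}^{N\times d})$. A standard Galerkin duality argument, employing the $V$-boundedness of the $H$-orthogonal projection $P^n$ onto $V_n$ (granted by the $V$-orthogonality of the basis), then yields $\|\pt \u^n\|_{L^2(0,T;V^*)}\le C$. Passing to subsequences one has $\u^n\tow \u$ weakly in $L^2(0,T;V)$ and weakly$^*$ in $L^{\infty}(0,T;H)$, $\pt\u^n\tow \pt\u$ weakly in $L^2(0,T;V^*)$, and $\S^n\tow \S$ weakly in $L^2(Q;\mathbb{R}^{N\times d})$. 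The Aubin--Lions lemma (based on the compact embedding $V\hookrightarrow H$) provides $\u^n\to \u$ strongly in $L^2(0,T;H)$, and the Gelfand-triple structure gives $\u\in\mathcal{C}([0,T];H)$; the Galerkin identity passes to~\eqref{WFS2} with $\S$ not yet identified.

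The main obstacle is the identification $\S=\S^*(\nabla\u)$, which I would settle by Minty's monotone trick. The Galerkin energy identity
\begin{equation*}
\tfrac12\|\u^n(t)\|_H^2 + \iqt \S^n:\nabla\u^n\d x\d\tau = \it\langle \f,\u^n\rangle_V\d\tau + \tfrac12\|P^n\u_0\|_H^2,
\end{equation*}
together with weak lower semicontinuity of $\|\cdot\|_H^2$ and the limiting equation tested by $\u$ (admissible by the Gelfand-triple regularity), produces
\begin{equation*}
\limsup_{n\to\infty}\iq \S^n:\nabla\u^n\d x\d t\le \iq \S:\nabla\u\d x\d t.
\end{equation*}
Combining this with the monotonicity inequality $\iq(\S^n-\S^*(\nabla\vv)):(\nabla\u^n-\nabla\vv)\d x \d t\ge 0$, valid for every $\vv\in L^2(0,T;V)$, passing to the weak limit and then choosing $\vv=\u\pm\lambda\w$ with $\w\in L^2(0,T;V)$ and $\lambda\to 0_+$ (invoking Lipschitz continuity of $\S^*$ to secure $\S^*(\nabla(\u\pm\lambda\w))\to \S^*(\nabla\u)$ strongly in $L^2(Q)$), one obtains $\iq(\S-\S^*(\nabla\u)):\nabla\w\d x\d t=0$ for every $\w$, hence~\eqref{pepa_100}.

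The strong attainment~\eqref{ICS2} of the initial datum is routine once $\u\in\mathcal{C}([0,T];H)$ and $P^n\u_0\to \u_0$ in $H$ are at hand: a cut-off argument analogous to Step~5 of the proof of Theorem~\ref{result} closes it. Uniqueness follows immediately from~\eqref{defmono}: subtracting two weak formulations, testing by the difference (admissible since $\pt(\u_1-\u_2)\in L^2(0,T;V^*)$), and integrating in time, the uniform monotonicity together with the common initial datum yields $\tfrac12\|\u_1(t)-\u_2(t)\|_H^2 + C_1\iqt|\nabla(\u_1-\u_2)|^2\d x\d\tau \le 0$, forcing $\u_1\equiv \u_2$.
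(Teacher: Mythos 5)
Your proposal is correct and follows essentially the same route as the paper's Appendix C proof: Faedo--Galerkin with an eigenfunction basis, $H$- and $V$-bounded projections $P^n$ to control $\pt\u^n$ in $L^2(0,T;V^*)$, Aubin--Lions compactness, Minty's trick for the identification $\S=\S^*(\nabla\u)$ via the energy identity and weak lower semicontinuity, a time cut-off to attain the initial datum, and uniform monotonicity for uniqueness. The only cosmetic difference is that the paper derives the $\limsup$ inequality on $Q_t$ for arbitrary $t$ by first multiplying the Galerkin energy identity by the cut-off $\eta$, whereas you take $t=T$ directly; both are standard and equivalent here.
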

\begin{remark}  Obviously, we could completely avoid using $\S$ in the formulation of Lemma \ref{S2thm} and merely require that $\u$ fulfills, instead of \eqref{WFS2}-\eqref{pepa_100},
$$
\langle \pt \u,  \vp \rangle_V+ \io \S^*(\nabla \u) : \nabla \vp \d x = \langle \f,  \vp \rangle_V \qquad \textrm{ for a.a. } t\in (0,T) \textrm{   and for all } \vp\in V.
$$
The formulation used in Lemma \ref{S2thm} is more suitable for proving Theorem \ref{result} in this text.
\end{remark}
\begin{proof} We follow the original Minty method, see \cite{Minty}, with small modifications adapted to our setting.
The whole proof is split into several steps.

\paragraph{{\bf Step 1.} \underline{Galerkin approximations}} Let $\{\w_i\}_{i\in \N}$ and the corresponding $\lambda_i$ be the solutions of the eigenvalue problem $((\w_i, \z)) = \lambda_i (\w_i, \z)$ valid for all $\z\in V$. Here, $((\cdot,\cdot))$ stands for the scalar product in $V$ and $(\cdot,\cdot)$ is the scalar product in $H$, whereas the spaces $V$ and $H$ are defined in Section \ref{Sec-2}. Then $\{\w_i\}_{i\in \N}$ forms an orthogonal basis in $V$ that is in addition orthonormal in~$H$. Furthermore, the projection $P^n$ of $V$ on the linear hull of $\{\w_i\}_{i=1}^n$, defined by
\begin{equation}
P^n \u:= \sum_{i=1}^n (\u, \w_i)_H \w_i,
\end{equation}
satisfies $\|P^n \u\|_{H} \le \|\u\|_{H}$ and $\|P^n \u\|_{V} \le \|\u\|_{V}$. See, for example \cite[Section 6.4]{MNRR96} for details.

For every $n\in \mathbb{N}$, we set %define the Galerkin approximation function $\u^n$,
\begin{equation}\label{ga2}
\u^n(t,\x):= \sum_{i=1}^{n} c^n_i(t) \w_i(\x)~\text{ for }~(t, \x) \in Q,
\end{equation}
where the functions $c^n_i(t)$ solve the following system of ordinary differential equations
\begin{subequations}
\begin{equation}\label{galerkin2}
(\pt \u^n, \w_i)_H + \io \S^*(\D^n) \!: \!\nabla\w_i \d x=\langle \f,  \w_i \rangle_V, \qquad i=1,\dots,n,
\end{equation}
with the initial conditions
\begin{equation}\label{galerkinIC2}
c^n_i( 0)= \io \u_0 \cdot \w_i \d x =(\u_0,\w_i)_H, \qquad i=1,\dots,n.
\end{equation}
\end{subequations}
Thanks to the Picard--Lindel\"{o}f theorem, there exists a unique solution defined on an interval $[0, t^n)$. By virtue of the uniform estimates established in \eqref{odhadv2} below, one observes that $t^n\ge T$ for all~$n$.

\paragraph{{\bf Step 2.} \underline{Uniform estimates}}
Multiplying the $i$-th equation in \eqref{galerkin2} by $c^n_i(t)$ and summing the result over $i=1,\dots,n$, we obtain
\begin{align}\label{testvn2}
\frac12 \dt \|\u^n\|^2_H + \io \S^*(\D^n) : \D^n \d x = \langle \f,  \u^n \rangle_V.
\end{align}
Then, by means of H\"older's and Young's inequalities and \eqref{pepa_99}, followed by integration over~$(0,t)$, we conclude, using also the assumptions on the data $\u_0$ and $\f$, that
\begin{equation}\label{odhadv2}
\begin{aligned}
\sup_{t \in (0,T)} \|\u^n(t)\|^2_{H} &+ \iT \|\u^n\|^2_V + \|\S^*(\nabla \u^n)\|^2_{L^2(\o{;\R^{N\times d}})} \d t \leq C \left( \iT \| \f \|^2_{V^*} \d t + \|\u_0\|^2_H \right) \leq C.
\end{aligned}
\end{equation}
This implies the following $n$-independent estimate
\begin{equation}\label{uniformvn2}
\|\u^n\|_{L^2(0,T;V) \cap L^\infty(0,T;H)} + \|\S^*(\nabla \u^n)\|_{L^2(Q{;\R^{N\times d}})} \leq C.
\end{equation}
Furthermore, for any $\vp \in V$, we obtain from \eqref{galerkin2}
%\begin{equation}\label{PffH}
%(\u^n, P^n\vp)_H= \left(\sum_{j=1}^n c^n_j \w_j, \sum_{i=1}^n (\vp, \w_i)_H \w_i\right)_H =  \sum_{j=1}^n c^n_j (\vp, \w_j)_H = (\u^n, \vp)_H.
%\end{equation}
%we have that,
\begin{equation*}
\langle \pt \u^n, \vp \rangle_V = (\pt \u^n, P^n\vp)_H = \io \pt \u^n \cdot (P^n\vp) \d x
=- \io \S^*(\D^n) : \nabla (P^n\vp) \d x + \langle \f, P^n\vp \rangle_V.
%&\leq C \left( \|\S^*(\D^n)\|_{L^2(\o)} + \|\f\|_{V^*}\right)\|P^n\vp\|_V \leq C \left( \|\S^*(\D^n)\|_{L^2(\o)} + \|\f\|_{V^*}\right)\|\vp\|_V.
\end{equation*}
Standard duality and scalar product estimates together with \eqref{uniformvn2} and the continuity of~$P^n$ mentioned above imply that
%Therefore (using the notation $\mn := \{\vp \in V; \|\vp\|_V \leq 1\}$ and continuity of projection $P^n$, see \eqref{continuityPn}),
\begin{equation}\label{uniformptvn2}
%\begin{aligned}
\iT \|\pt \u^n\|^2_{V^*} \d t %&= \iT \sup_\mn~ \langle \pt \u^n, \vp \rangle^2_V \d t \leq C \iT \|\S^*(\D^n)\|^2_{L^2(\o)} + \|\f\|^2_{V^*} \d t  \\
\leq C, \qquad\qquad~\text{ uniformly with respect to } n \in \N.
%\end{aligned}
\end{equation}
%thanks to the assumption on $\f$ and the estimates in \eqref{uniformvn2}.

%Moreover, we claim that also
%\begin{equation}
%\pt \u \in  L^2(0,T; V^*).
%\end{equation}
%Define $\mn := \{\vp \in V; \|\vp\|_V \leq 1\}$ and formally assume that $\pt \u(t)$ is in $H^*$ for almost every $t\in (0,T)$,
%\begin{align*}
%\|\pt \u\|_{V^*} &= \sup_\mn~\langle \pt \u,  \vp \rangle_V = \sup_\mn \left( -\io \S : \nabla \vp \d x + \langle \f,  \vp \rangle_V \right) \leq C \left(\|\S\|_{L^2(\o)} + \|\f\|_{V^*}\right),
%\end{align*}
%where we used \eqref{wffS2} (from this we obtained the estimates \eqref{vL2V2}), the H\"older inequality, and definition of $\vp$ and its norm in $V$. For the second power of $\|\pt \u\|_{V^*}$ integrated over $(0,T)$ we obtain the desired bound,
%\begin{equation}\label{odhadptv2}
%\iT \|\pt \u\|^2_{V^*} \d t
%\leq C \iT \|\S\|^2_{L^2(\o)} + \|\f\|^2_{V^*} \d t \leq C,
%\end{equation}
%and the boundedness follows from the assumption on $\f$ and already proven estimates \eqref{vL2V2}. From $\u \in L^2(0,T; V)$ and $\pt \u \in L^2(0,T; V^*)$ we know that $\u \in \C([0,T]; H)$, which is a standard property of the Sobolev Bochner functions.

\paragraph{{\bf Step 3.} \underline{Limit \texorpdfstring{$n\to \infty$}{n}}} %\label{S2limit}

By virtue of the uniform estimates \eqref{uniformvn2} and  \eqref{uniformptvn2}, the reflexivity of spaces $V$ and $V^*$ and the Aubin--Lions lemma, there exist (not relabelled) subsequences and functions $\u$ and $\S$ such that, for $n \to \infty$,
\begin{subequations}\label{convergences2}
\begin{align}
\u^n &\tow^* \u &&\text{weakly$^*$ in } L^\infty(0, T; H),\\
\u^n &\tow \u &&\text{weakly in } L^2(0, T; V), \\
\pt \u^n &\tow \pt \u &&\text{weakly in } L^2(0, T; V^*), \\
\u^n &\to \u &&\text{strongly in } L^2(0, T; H), \label{convergence-strong2}\\
\S^*(\D^n) &\tow \S &&\text{weakly in } L^2(Q; \R^{{N\times d}}).
\end{align}
\end{subequations}
For any $\xi \in C^1(0, T)$ and $\vp \in V$,  multiplying the $i$-th equation by $\xi(\vp, \w_i)_H$, summing the result over $i=1,\ldots, k$ for $k\leq n$ and integrating then the outcome over $(0,T)$, we get, for every $k=1,\ldots, n${,}
\begin{align*}
\iT \left(\pt \u^n, \xi P^k\vp \right)_H \d t + \iq \S^*(\nabla \u^n) : \nabla (P^k\vp)\xi  \d x \d t = \iT \langle \f, \xi P^k\vp \rangle_V \d t.
\end{align*}
Using the convergence results \eqref{convergences2}, we can easily take the limit for $n \to \infty$. Since the limit terms hold for any smooth $\xi$, we obtain
\begin{equation*}
\langle \pt \u, P^k\vp \rangle_V + \io \S: \nabla(P^k\vp) \d x = \langle \f, P^k\vp \rangle_V \qquad \textrm{ for a.a. } t\in (0,T) \textrm{ and for all } k \in \N.
\end{equation*}
As $P^k \vp \to \vp$ in $V$ as $k \to \infty$, we arrive at the weak formulation \eqref{WFS2}.

\paragraph{{\bf Step 4.} \underline{Attainment of the initial datum}}

%\subsection*{Initial data attainment}\label{iniS2}

We first notice that it follows from $\u \in L^2(0,T; V)$ and $\pt \u \in L^2(0,T; V^*)$ that $\u \in \C([0,T]; H)$. Hence
\begin{equation}\label{strongconvpocpod2}
\u(t) \to \u(0) \qquad \text{ strongly in } H \text{ as } t \to 0_+.
\end{equation}
To prove \eqref{ICS2}, it is then enough to show that
\begin{equation}
\u(t) \tow \u_0 \quad \textrm{ weakly in } H \textrm{ as } t \to 0_+. \label{pepa_98}
\end{equation}
To this end, let $0< \e \ll 1$ and $t \in (0,T-\e)$. Recalling the definition of an auxiliary $\eta$ in \eqref{eta}, multiplying \eqref{galerkin2} by such an $\eta$ and integrating the result with respect to $\tau \in (0,T)$, we obtain, for every $i = 1, \ldots, n$,
\begin{equation*}
\iT (\pt \u^n, \w_i)_H \eta \d \tau +  \iq \S^*(\nabla \u^n) \!:\! \nabla \w_i \eta\d x \d \tau = \iT \langle \f, \w_i\rangle_V \eta \d \tau.
\end{equation*}
Integration by parts in the first term (using $\eta(T)=0$) then leads to
\begin{align*}
-\iT (\u^n, \w_i)_H \eta' \d \tau + \iq \S^*(\nabla \u^n) : \nabla \w_i \eta\d x \d \tau = \iT \langle \f, \w_i\rangle_V \eta \d \tau+ (P^n \u_0, \w_i)_H \eta(0).
\end{align*}
Applying the weak convergence results established in \eqref{convergences2} as well as the convergence of the projection $P^n$
as $n \to \infty$ we observe, for any $i \in \N$, that
\begin{align*}
-\iT (\u, \w_i)_H \eta' \d \tau +  \iq \S \!:\! \nabla\w_i \eta\d x \d \tau = \iT \langle \f, \w_i\rangle_V \eta \d \tau+ (\u_0, \w_i)_H \eta(0).
\end{align*}
By noting the properties of $\eta$, namely that $\eta(\tau) = 1$ for $\tau\in [0,t)$, $\eta(\tau) = 0$ for $\tau\in (t + \e, T]$, and $\eta'(\tau) = -\frac{1}{\e}$ for $\tau \in (t,t + \e)$, then yields
\begin{align*}
\frac{1}{\e}\int_t^{t+\e} (\u, \w_i)_H \d \tau +  \int_{Q_{t+\e}} \S :\nabla \w_i \eta \d x \d \tau = \int_0^{t+\e} \langle \f, \w_i\rangle_V \eta\d \tau + (\u_0, \w_i)_H.
\end{align*}
Finally, we let $\e \to 0_+$. In the first term, the integrand is well-defined (in fact, $\u \in \C([0,T];H)$), and the term converges to $(\u(t), \w_i)_H$. In the other terms, due to their integrability, we can take the limit as $\e \to 0_+$ together with $t \to 0_+$ and arrive at
\begin{align*}
\lim_{t \to 0_+}(\u(t), \w_i)_H = (\u_0, \w_i)_H.
\end{align*}
Since $\{\w_i\}_{i\in \N}$ forms a basis in $H$, \eqref{pepa_98} and then also \eqref{ICS2} are proved.

\paragraph{{\bf Step 5.} \underline{Attainment of the constitutive equation}}
%\subsection*{Identification of nonlinearities}\label{identS2}

It remains to show~{\eqref{pepa_100}}. %that the limiting objects relate each other in the way we want them to, i.e., that $(\S,\D) \in \A$, which means here that $\S=\S^*(\nabla \u)$.
To do so, we multiply \eqref{testvn2} by piecewise linear $\eta(\tau)$ defined in \eqref{eta} and integrate the result over $(0,T)$. This yields
\begin{align*}
\int_{Q_{t + \e}} &\S^*(\D^n) : \D^n \eta \d x \d \tau = \int_0^{t+\e}\langle \f, \u^n \rangle_V \eta \d \tau + \frac12 \|P^n \u_0\|_H^2 - \frac{1}{2\e}\int_t^{t + \e} (\u^n, \u^n)_H \d \tau.
\end{align*}
Since $\S^*(\0) = \0$ and $\S^*(\cdot)$ is monotone, we have, for every $n \in \N$,
\begin{align*}
\S^*(\D^n) : \D^n &\geq 0.
\end{align*}
Therefore, as $\eta \equiv 1$ in $Q_t$,
\begin{align*}
\limsup_{n \to \infty} &\int_{Q_{t}} \S^*(\D^n) : \D^n \d x \d \tau \leq \limsup_{n \to \infty} \int_{Q_{t + \e}} \S^*(\D^n) : \D^n \eta \d x \d \tau \\
&= \limsup_{n \to \infty} \int_0^{t+\e}\langle \f, \u^n \rangle_V \eta \d \tau + \frac12 \|P^n \u_0\|_H^2 - \liminf_{n \to \infty} \frac{1}{2\e}\int_t^{t + \e} (\u^n, \u^n)_H \d \tau \\
&\leq \int_0^{t+\e}\langle \f, \u \rangle_V \eta \d \tau + \frac12 \|\u_0\|_H^2 - \frac{1}{2\e}\int_t^{t + \e} (\u, \u)_H \d \tau,
\end{align*}
where we used the results established in \eqref{convergences2} and the weak lower-semicontinuity of the norm. Letting $\e \to 0_+$, we note that the left-hand side is independent of $\e$ and {that} all quantities on the right-hand side are well-defined for the limit (since $\u \in \C([0,T]; H)$). We thus obtain, for an arbitrary $t\in(0,T)$,
\begin{equation}\label{limitujuce}
\begin{aligned}
\limsup_{n \to \infty} &\int_{Q_{t}} \S^*(\D^n) : \D^n \d x \d \tau \leq \it\langle \f, \u \rangle_V \d \tau + \frac12 \left( \|\u_0\|_H^2 - \|\u(t)\|_H^2 \right).
\end{aligned}
\end{equation}
Now, we set $\vp := \u$ in \eqref{WFS2} and integrate the result over the time interval $(0,t)$. Using integration by parts formulae (thanks to the fact that we have the Gelfand triple) and \eqref{ICS2}, we get
\begin{equation}\label{limitne}
\begin{aligned}
\int_{Q_{t}} \S^*(\D) : \D \d x \d \tau &= \it\langle \f, \u \rangle_V - \langle \pt \u, \u \rangle_V \d \tau \\
&= \it\langle \f, \u \rangle_V \d \tau + \frac12 \left( \|\u_0\|_H^2 - \|\u(t)\|_H^2 \right).
\end{aligned}
\end{equation}
Hence, by comparing \eqref{limitujuce} and \eqref{limitne}, we obtain
\begin{equation}\label{odhadbox}
\begin{aligned}
\limsup_{n \to \infty} &\int_{Q_{t}} \S^*(\D^n) : \D^n \d x \d \tau\leq \int_{Q_{t}} \S^*(\D) : \D \d x \d \tau.
\end{aligned}
\end{equation}
Now, let $\W \in L^2(0,T;L^2(\o))$ be arbitrary, then
\begin{align*}
0&\leq \iqt (\S^*(\D^n)-\S^*(\W)):(\D^n-\W) \d x \d \tau \\
&=\iqt \S^*(\D^n):\D^n  \d x \d \tau -\iqt \S^*(\D^n):\W + \S^*(\W):(\D^n-\W) \d x \d \tau.
\end{align*}
Letting $n\to \infty$, using the estimate \eqref{odhadbox} and the weak convergence results stated in~\eqref{convergences2}, we obtain
\begin{equation*}
0\leq \iqt (\S^*(\D)-\S^*(\W)):(\D-\W) \d x \d \tau.
\end{equation*}
Finally, by setting in particularly $\W:= \D \pm \e \Z$, dividing the result by $\e$ and letting $\e \to 0_+$ (at this point we use the continuity of the Lipschitz continuous single-valued mapping $\S^*$), we obtain, for arbitrary~$\Z$,
\begin{equation}\label{minty}
0\leq \iqt (\S-\S^*(\D)):\Z \d x \d \tau,
\end{equation}
which implies that $\S = \S^*(\D)$ in $Q_t$ for any $t\in (0,T)$.

\paragraph{{\bf Step 6.} \underline{{Uniqueness}}} Let $(\u_1,\S_1)$ and $(\u_2,\S_2)$ be two different weak solutions to \eqref{S2problem} corresponding to the same set of data. Subtracting their weak formulations and inserting the defining expressions for $\S_1$ and~$\S_2$, we obtain
\begin{equation*}
\langle \pt (\u_1 - \u_2),  \vp \rangle_V+ \io (\S^*(\D_1) - \S^*(\D_2)) : \nabla\vp \d x = 0 \qquad \textrm{ for all } \vp \in V.
\end{equation*}
Taking $\vp:=(\u_1(t,\cdot) -\u_2(t,\cdot))$, we get
\begin{align*}
\frac{1}{2} \dt \|\u_1 - \u_2\|^2_H &+ \io (\S^*(\D_1) - \S^*(\D_2)) : (\D_1-\D_2) \d x =0,
\end{align*}
which, due to the uniform monotonicity of $\S^*= \S^*(\nabla \u)$, after integration over $(0,t)$ for an arbitrary $t\in(0,T)$, leads to
\begin{equation*}
\|\u_1(t) - \u_2(t)\|^2_H \leq \|\u_1(0) - \u_2(0)\|^2_H=0.
\end{equation*}
Necessarily, $\u_1(t) = \u_2(t)$ in $V$ for almost every $t \in (0,T)$, and obviously, $\S_1=\S^*(\D_1)=\S^*(\D_2)=\S_2$.
\end{proof}

\end{appendix}

%\bibliographystyle{amsplain}
%\bibliography{biblio}

\providecommand{\bysame}{\leavevmode\hbox to3em{\hrulefill}\thinspace}
\providecommand{\MR}{\relax\ifhmode\unskip\space\fi MR }
% \MRhref is called by the amsart/book/proc definition of \MR.
\providecommand{\MRhref}[2]{%
  \href{http://www.ams.org/mathscinet-getitem?mr=#1}{#2}
}
\providecommand{\href}[2]{#2}

\end{document}